\tikzset{negated/.style={
		decoration={markings,
			mark= at position 0.5 with {
				\node[transform shape] (tempnode) {$\backslash$};
			}
		},
		postaction={decorate}
	}
}
\DeclareMathAlphabet{\mathcalalt}{OMS}{cmsy}{m}{n}
\theoremstyle{plain}
\newtheorem{theorem}{Theorem}[section]
\newtheorem{proposition}[theorem]{Proposition}
\newtheorem{corollary}[theorem]{Corollary}
\newtheorem{lemma}[theorem]{Lemma}
\theoremstyle{definition}
\newtheorem{remark}[theorem]{Remark}
\newtheorem{example}[theorem]{Example}
\newtheorem{definition}[theorem]{Definition}
\newcommand{\abs}[1]{\lvert#1\rvert}
\newcommand{\norm}[1]{\lVert#1\rVert}
\newcommand{\uppars}[1]{\textup{(}#1\textup{)}}
\newcommand{\EE}{\mathbb E}
\newcommand{\RR}{\mathbb R}
\newcommand{\NN}{\mathbb N}
\newcommand{\ZZ}{\mathbb Z}
\newcommand{\pos}[1]{{#1}^+}
\newcommand{\posE}{\pos{E}}
\newcommand{\ob}{\mathrm{ob}}
\newcommand{\oc}{\mathrm{oc}}
\newcommand{\reg}{\mathrm r}
\newcommand{\obops}{\mathcal{L}_{\ob}}
\newcommand{\ocops}{\mathcal{L}_{\oc}}
\newcommand{\regops}{\mathcal{L}_{\ob}}
\newcommand{\optop}[1]{\mathrm{ASOT}_{#1}}
\newcommand{\opfont}{\mathcal}
\newcommand{\opgroup}{{\opfont G}}
\newcommand{\oplin}{{\opfont V}}
\newcommand{\oplat}{{\opfont E}}
\newcommand{\opset}{{\opfont S}}
\newcommand{\alg}{\opfont A}
\newcommand{\algtwo}{\opfont B}
\newcommand{\opideal}{\opfont I}
\newcommand{\centre}{{\opfont Z}}
\newcommand{\odual}[1]{{#1}^{\thicksim}}
\newcommand{\ocdual}[1]{{#1}^\thicksim_{\oc}}
\newcommand{\Ell}{\mathrm{L}}
\newcommand{\dc}{\mathrm{d}}
\newcommand{\conv}{\xrightarrow}
\newcommand{\convwithoverset}[1]{\conv{#1}}
\renewcommand{\o}{\mathrm{o}}
\newcommand{\uo}{\mathrm{uo}}
\newcommand{\unb}{\mathrm{u}}
\newcommand{\uoLt}{{\widehat{\tau}}}
\newcommand{\oconv}{\convwithoverset{\o}}
\newcommand{\uoconv}{\convwithoverset{\uo}}
\newcommand{\tauconv}{\convwithoverset{\tau}}
\newcommand{\netgen}[2]{{(#1)_{#2}}}
\newcommand{\indexsetfont}{\mathcalalt}
\newcommand{\is}{\indexsetfont{A}}
\newcommand{\net}{\netgen{x_\alpha}{\alpha\in\is}}
\newcommand{\opnet}{\netgen{T_\alpha}{\alpha\in\is}}
\newcommand{\istwo}{\indexsetfont{B}}
\newcommand{\nettwo}{\netgen{x_\beta}{\beta\in\istwo}}
\newcommand{\seq}[2]{(#1)_{#2=1}^\infty}
\newcommand{\oLtop}{o-Lebes\-gue topology}
\newcommand{\oLtops}{o-Lebes\-gue topologies}
\newcommand{\oL}{o-Lebesgue}
\newcommand{\uoLtop}{uo-Lebes\-gue topology}
\newcommand{\uoLtops}{uo-Lebes\-gue topologies}
\newcommand{\necun}{\textup{(}necessarily unique\textup{)}}
\DeclareMathOperator{\Orth}{Orth}
\newcommand{\Id}{I}
\newcommand*\patchAmsMathEnvironmentForLineno[1]{%
	\expandafter\let\csname old#1\expandafter\endcsname\csname #1\endcsname
	\expandafter\let\csname oldend#1\expandafter\endcsname\csname end#1\endcsname
	\renewenvironment{#1}%
	{\linenomath\csname old#1\endcsname}%
	{\csname oldend#1\endcsname\endlinenomath}}%
\newcommand*\patchBothAmsMathEnvironmentsForLineno[1]{%
	\patchAmsMathEnvironmentForLineno{#1}%
	\patchAmsMathEnvironmentForLineno{#1*}}%
\crefname{theorem}{Theorem}{Theorems}
\crefname{proposition}{Proposition}{Propositions}
\crefname{lemma}{Lemma}{Lemmas}
\crefname{corollary}{Corollary}{Corollaries}
\crefname{conjecture}{Conjecture}{Conjectures}
\crefname{definition}{Definition}{Definitions}
\crefname{example}{Example}{Examples}
\crefname{remark}{Remark}{Remarks}
\crefname{assumption}{Assumption}{Assumptions}
\crefname{hypothesis}{Hypothesis}{Hypotheses}
\crefname{question}{Question}{Questions}
\crefname{problem}{Problem}{Problems}
\crefname{task}{Task}{Tasks}
\crefname{addendum}{Addendum}{Addenda}
\crefname{idea}{Idea}{Ideas}
\crefname{suggestion}{Suggestion}{Suggestions}
\crefname{context}{Context}{Contexts}
\crefname{exercise}{Exercise}{Exercises}
\crefname{section}{Section}{Sections}
\crefname{subsection}{Section}{Sections}
\crefname{subsubsection}{Section}{Sections}
\crefname{equation}{equation}{equations}
\crefname{enumi}{part}{parts}
\crefname{enumii}{part}{parts}
\crefname{enumiii}{part}{parts}
\crefname{enumiv}{part}{parts}
\newcommand{\enclosepart}[1]{(#1)}
\newcommand{\partref}[1]{\enclosepart{\ref{#1}}}
\def\slashedarrowfill@#1#2#3#4#5{%
	$\m@th\thickmuskip0mu\medmuskip\thickmuskip\thinmuskip\thickmuskip
	\relax#5#1\mkern-7mu%
	\cleaders\hbox{$#5\mkern-2mu#2\mkern-2mu$}\hfill
	\mathclap{#3}\mathclap{#2}%
	\cleaders\hbox{$#5\mkern-2mu#2\mkern-2mu$}\hfill
	\mkern-7mu#4$%
}
\def\rightslashedarrowfilla@{%
	\slashedarrowfill@\relbar\relbar{\raisebox{1.2pt}{$\scriptscriptstyle\diagup$}}\rightarrow}
\newcommand\xslashedrightarrowa[2][]{%
	\ext@arrow 0055{\rightslashedarrowfilla@}{#1}{#2}}
\def\rightslashedarrowfillb@{%
	\slashedarrowfill@\relbar\relbar/\rightarrow}
\newcommand\xslashedrightarrowb[2][]{%
	\ext@arrow 0055{\rightslashedarrowfillb@}{#1}{#2}}
\def\rightslashedarrowfillc@{%
	\slashedarrowfill@\relbar\relbar{\raisebox{.12em}{\tiny/}}\rightarrow}
\newcommand\xslashedrightarrowc[2][]{%
	\ext@arrow 0055{\rightslashedarrowfillc@}{#1}{#2}}
\tikzset{nomorepostaction/.code=\let\tikz@postactions\pgfutil@empty}
\begin{document}
\title[Convergence structures and locally solid topologies]{Convergence structures and locally solid topologies on vector lattices of operators}

\author{Yang Deng}
\address{Yang Deng, School of Economic Mathematics, Southwestern University of Finance and Economics, Chengdu, Sichuan, 611130, PR China} 
\email{dengyang@swufe.edu.cn}

\author{Marcel de Jeu}
\address{Marcel de Jeu, Mathematical Institute, Leiden University, P.O.\ Box 9512, 2300 RA Leiden, the Netherlands;
	and Department of Mathematics and Applied Mathematics, University of Pretoria, Cor\-ner of Lynnwood Road and Roper Street, Hatfield 0083, Pretoria, South Africa}
\email{mdejeu@math.leidenuniv.nl}

%\thanks{During this research, the first author was supported by a grant of China Scholarship Council (CSC)}
\keywords{Vector lattice, Banach lattice, order convergence, unbounded order convergence, uo-Lebesgue topology, orthomorphism, absolute strong operator topology, uniform order boundedness principle}
\subjclass[2010]{Primary: 47B65. Secondary:  46A19}

%\makeatletter{\renewcommand*{\@makefnmark}{}
%	\date{}\footnote{{File: \currfilebase. Compiled: \today,\,\currenttime.}}
%	\makeatother}

\begin{abstract}For vector lattices $E$ and $F$, where $F$ is Dedekind complete and supplied with a locally solid topology, we introduce the corresponding locally solid absolute strong operator topology on the order bounded operators $\mathcal L_{\mathrm{ob}}(E,F)$ from $E$ into $F$. Using this, it follows that $\mathcal L_{\mathrm{ob}}(E,F)$ admits a Hausdorff uo-Lebesgue topology whenever $F$ does.\\
For each of order convergence, unbounded order convergence, and\textemdash when applicable\textemdash convergence in the Hausdorff uo-Lebesgue topology, there are both a uniform and a strong convergence structure on $\mathcal L_{\mathrm {ob}}(E,F)$. Of the six conceivable inclusions within these three pairs, only one is generally valid. On the orthomorphisms of a Dedekind complete vector lattice, however, five are generally valid, and the sixth is valid for order bounded nets. The latter condition is redundant in the case of \emph{sequences} of orthomorphisms, as a consequence of a uniform order boundedness principle for orthomorphisms that we establish.
\\
We furthermore show that, in contrast to general order bounded operators, orthomorphisms preserve not only order convergence of nets, but unbounded order convergence and\textemdash when applicable\textemdash conver\-gence in the Hausdorff uo-Lebesgue topology as well.
\end{abstract}

\maketitle

\section{Introduction and overview}\label{sec:introduction_and_overview}

\noindent Let $X$ be a non-empty set. A \emph{convergence structure on $X$} is a non-empty collection $\mathcal C$ of pairs $(\net,x)$, where $\net$ is a net in $X$ and $x\in X$, such that:
\begin{enumerate}
\item when $(\net,x)\in\mathcal C$, then also $(\nettwo,x)\in\mathcal C$ for every subnet $\nettwo$ of $\net$;
\item when a net $\net$ in $X$ is constant with value $x$, then $(\net,x)\in\mathcal C$.
\end{enumerate}
One can easily vary on this definition. For example, one can allow only sequences. There does not appear to be a consensus in the literature about the notion of a convergence structure; \cite{beattie_butzmann_CONVERGENCE_STRUCTURES_AND_APPLICATIONS_TO_FUNCTIONAL_ANALYSIS:2002} uses filters, for example. Ours is sufficient for our merely descriptive purposes, and close in spirit to what may be the first occurrence of such a definition in \cite{dudley:1964} for sequences. Although we shall not pursue this in the present paper, let us still mention that the inclusion of the subnet criterion in the definition makes it possible to introduce an associated topology on $X$ in a natural way. Indeed, define a subset of $S$ of $X$ to be \emph{$\mathcal C$-closed} when $x\in S$ for all pairs $(\net,x)\in\mathcal C$ such that $\net\subseteq S$. Then the collection of the complements of the {$\mathcal C$-closed} subsets of $X$ is a topology on $X$.

The convergent nets in a topological space, together with their limits, are the archetypical example of a convergence structure. For a given convergence structure $\mathcal C$ on a non-empty set $X$, however, it is not always possible to find a (obviously unique) topology $\tau$ on $X$ such that the $\tau$-convergent nets in $X$, together with their limits, are precisely the elements of $\mathcal C$. Such non-topological convergence structures arise naturally in the context of vector lattices. For example, the order convergent nets in a vector lattice, together with their order limits, form a convergence structure, but this convergence structure is topological if and only if the vector lattice is finite dimensional; see \cite[Theorem~1]{dabboorasad_emelyanov_marabeh_UNPUBLISHED:2017} or \cite[Theorem~8.36]{taylor_THESIS:2018}. Likewise, the unbounded order convergent nets in a vector lattice, together with their unbounded order limits, form a convergence structure, but this convergence structure is topological if and only if the vector lattice is atomic; see \cite[Theorem~6.54]{taylor_THESIS:2018}. Topological or not, the order and unbounded order convergence structures, together with the (topological) structure for convergence in the Hausdorff \uoLtop, when this exists, yield three natural and related convergence structures on a vector lattice to consider.

Suppose that $E$ and $F$ are vector lattices, where $F$ is Dedekind complete. The above then yields three convergence structures on the vector lattice $\obops(E,F)$ of order bounded operators from $E$ into $F$. On the other hand, there are also three convergence structures on $\obops(E,F)$ that are naturally derived from the three convergence structures on the vector lattice $F$. For example, one can consider all pairs $(\opnet,T)$, where $\opnet$ is a net in $\obops(E,F)$ and $T\in\obops(E)$, such that $\netgen{T_\alpha x}{\alpha\in\is}$ is order convergent to $Tx$ in $F$ for all $x\in E$. These pairs also form a convergence structure on $\obops(E,F)$. Likewise, the pointwise unbounded order convergence in $F$ and\textemdash when applicable\textemdash the pointwise convergence in the Hausdorff \uoLtop\ on $F$ both yield a convergence structure on $\obops(E,F)$. Motivated by the terminology for operators between Banach spaces, we shall speak of \emph{uniform} and \emph{strong} convergence structures on $\obops(E)$\textemdash with the obvious meanings.

The present paper is primarily concerned with the possible inclusions between the uniform and strong convergence structure for each of order convergence, unbounded order convergence, and\textemdash when applicable\textemdash convergence in the Hausdorff \uoLtop. Is it true that a uniformly order convergent net of order bounded operators is also strongly order convergent? Is the converse true? How is this for unbounded order convergence and, when applicable, convergence in the Hausdorff \uoLtop? We consider these implications, six in all, for $\obops(E,F)$, but also for the orthomorphisms $\Orth(E)$ on a Dedekind complete vector lattice.\footnote{With six convergence structures under consideration, one can actually consider thirty non-trivial possible inclusions between them. With some more effort, one can determine for all of these whether they are generally valid for the order bounded operators and for the orthomorphisms on a Dedekind complete vector lattice; see \cite[Tables~3.1 and~3.2]{deng_de_jeu_UNPUBLISHED:2020c}.} This special interest in $\Orth(E)$ stems from representation theory. When a group acts as order automorphisms on a Dedekind complete vector lattice $E$, then the Boolean lattice of all invariant bands in $E$ can be retrieved from the commutant of the group action in $\Orth(E)$. This commutant, therefore, plays the role of the von Neumann algebra which is the commutant of a unitary action of a group on a Hilbert space. It has been known long since that more than one topology on a von Neumann algebra is needed to understand it and its role in representation theory on Hilbert spaces, and the same holds true for the convergence structures as related to these commutants in an ordered context. Using these convergence structures, it is, for example, possible to obtain ordered versions of von Neumann's bicommutant theorem. We shall report separately on this. Apart from its intrinsic interest, the material on $\Orth(E)$ in the present paper is an ingredient for these next steps.

\smallskip
This paper is organised as follows.
\smallskip

\cref{sec:preliminaries} contains the basic notations, definitions, conventions, and references to earlier results.

In \cref{sec:topologies_on_vector_lattices_of_order_bounded_operators}, we show how, given a vector lattice $E$, a Dedekind complete vector lattice $F$, and a (not necessarily Hausdorff) locally solid linear topology $\tau_F$ on $F$, a locally solid linear topology can be introduced on $\obops(E,F)$ that deserves to be called the absolute strong operator topology that is generated by $\tau_F$. This is a preparation for \cref{sec:lebesgue_topologies_on_vector_lattices_of_operators}, where we show that regular vector sublattices of $\obops(E,F)$ admit a Hausdorff \uoLtop\ when $F$ admits one.

For each of order convergence, unbounded order convergence, and\textemdash when applicable\textemdash convergence in the Hausdorff \uoLtop, there are two conceivable implications between uniform and strong convergence of a net of order bounded operators. In \cref{sec:uniform_and_strong_convergence_structures_on_regops}, we show that only one of these six is generally valid. \cref{sec:uniform_and_strong_convergence_structures_on_orth} will make it clear that the five  failures are, perhaps, not as `only to be expected' as one might think at first sight.

In \cref{sec:orthomorphisms}, we review some material concerning orthomorphism and establish a few auxiliary result for use in the present paper and in future ones. It is shown here that a Dedekind complete vector lattice and its orthomorphisms have the same universal completion. Furthermore, a uniform order boundedness principle is established for sets of orthomorphisms. 

\cref{sec:continuity_of_orthomorphisms} briefly digresses from the main line of the paper. It is shown that orthomorphisms preserve not only the order convergence of nets, but also the unbounded order convergence and\textemdash when applicable\textemdash the convergence in the Hausdorff \uoLtop. None of this is true for arbitrary order bounded operators.

In \cref{sec:topologies_on_the_orthomorphisms_of_a_vector_lattice}, we return to the main line, and we specialise the results in \cref{sec:topologies_on_vector_lattices_of_order_bounded_operators,sec:lebesgue_topologies_on_vector_lattices_of_operators} to the orthomorphisms. When restricted to $\Orth(E)$, the absolute strong operator topologies from \cref{sec:topologies_on_vector_lattices_of_order_bounded_operators} are simply strong operator topologies.

\cref{sec:uniform_and_strong_convergence_structures_on_orth} on orthomorphisms is the companion of  \cref{sec:uniform_and_strong_convergence_structures_on_regops}, but the results are quite in contrast. For each of order convergence, unbounded order convergence, and\textemdash when applicable\textemdash convergence in the  Hausdorff \uoLtop, both  implications between uniform and strong convergence of a net of orthomorphisms \emph{are} valid, with an order boundedness condition on the net being necessary only for order convergence. For \emph{sequences} of orthomorphisms, this order boundedness condition is even redundant as a consequence of the uniform order boundedness principle for orthomorphisms from \cref{sec:orthomorphisms}.

\section{Preliminaries}\label{sec:preliminaries}

\noindent In this section, we collect a number of definitions, notations, conventions and earlier results.

All vector spaces are over the real numbers; all vector lattices are supposed to be Archimedean. We write $\posE$ for the positive cone of a vector lattice $E$. For a non-empty subset $S$ of $E$, we let $I_S$ and $B_S$ denote the ideal of $E$ and the band in $E$, respectively, that are generated by $S$; we write $S^\vee$ for $\{\,s_1\vee\dotsb\vee s_n: s_1,\dotsc,s_n\in S,\,n\geq 1\, \}$.

Let $E$ be a vector lattice, and let $x\in E$. We say that a net $\net$ in $E$ is \emph{order convergent to $x\in E$} (denoted by $x_\alpha\oconv x$) when there exists a net $(y_\beta)_{\beta\in \istwo}$ in $E$ such that $y_\beta\downarrow 0$ and with the property that, for every $\beta_0\in\istwo$, there exists an $\alpha_0\in \is$ such that $\abs{x-x_\alpha}\leq y_{\beta_0}$ whenever $\alpha$ in $\is$ is such that $\alpha\geq\alpha_0$. We explicitly include this definition to make clear that the index sets $\is$ and $\istwo$ need not be equal.

Let $\net$ be a net in a vector lattice $E$, and let $x\in E$. We say that $(x_\alpha)$ is \emph{unbounded order convergent to $x$ in $E$}  (denoted by $x_\alpha\uoconv x$) when $\abs{x_\alpha-x}\wedge y\oconv 0$ in $E$ for all $y\in \posE$. Order convergence implies unbounded order convergence to the same limit. For order bounded nets, the two notions coincide.

Let $E$ and $F$ be vector lattices. The order bounded operators from $E$ into $F$ will be denoted by $\obops(E,F)$. We write $\odual{E}$ for $\obops(E,\RR)$. A linear operator  $T: E\to F$ between two vector lattices $E$ and $F$ is \emph{order continuous} when, for every net $\net$ in $E$,  the fact that $x_\alpha\oconv 0$ in $E$ implies that $Tx_\alpha\oconv 0$ in $F$. An order continuous linear operator between two vector lattices is automatically order bounded; see \cite[Lemma~1.54]{aliprantis_burkinshaw_POSITIVE_OPERATORS_SPRINGER_REPRINT:2006}, for example. The order continuous linear operators from $E$ into $F$ will be denoted by $\ocops(E,F)$. We write $\ocdual{E}$ for $\ocops(E,\RR)$.

Let $F$ be a vector sublattice of a vector lattice $E$. Then $F$ is a \emph{regular vector sublattice of $E$} when the inclusion map from $F$ into $E$ is order continuous. Ideals are regular vector sublattices. For a net in a regular vector sublattice $F$ of $E$, its uo-convergence in $F$ and in $E$ are equivalent; see \cite[Theorem~3.2]{gao_troitsky_xanthos:2017}.

When $E$ is a vector space, a \emph{linear topology on $E$} is a (not necessarily Hausdorff) topology that provides $E$ with the structure of a topological vector space. When $E$ is a vector lattice, a \emph{locally solid linear topology on $E$} is a linear topology on $E$ such that there exists a base of (not necessarily open) neighbourhoods of 0 that are solid subsets of $E$.
For the general theory of locally solid linear topologies on vector lattices we refer to \cite{aliprantis_burkinshaw_LOCALLY_SOLID_RIESZ_SPACES_WITH_APPLICATIONS_TO_ECONOMICS_SECOND_EDITION:2003}.
When $E$ is a vector lattice, a \emph{locally solid additive topology on $E$} is a topology that provides the additive group $E$ with the structure of a (not necessarily Hausdorff) topological group, such that there exists a base of (not necessarily open) neighbourhoods of 0 that are solid subsets of $E$.

A topology $\tau$ on a vector lattice $E$ is an \emph{\oLtop} when it is a (not necessarily Hausdorff) locally solid linear topology on $E$ such that, for a net $\net$ in $E$, the fact that $x_\alpha\oconv 0$ in $E$ implies that $x_\alpha\tauconv 0$. A vector lattice need not admit a Hausdorff \oLtop. A topology $\tau$ on a vector lattice $E$ is a \emph{\uoLtop} when it is a (not necessarily Hausdorff) locally solid linear topology on $E$ such that, for a net $\net$ in $E$, the fact that $x_\alpha\uoconv 0$ in $E$ implies that $x_\alpha\tauconv 0$.
Since order convergence implies unbounded order convergence, a \uoLtop\ is an \oLtop. A vector lattice $E$ need not admit a Hausdorff \uoLtop, but when it does, then this topology is unique (see \cite[Propositions~3.2,~3.4, and~6.2]{conradie:2005} or \cite[Theorems~5.5 and~5.9]{taylor:2019}) and we denote it by $\uoLt_E$.

Let $E$ be a vector lattice, let $F$ be an ideal of $E$, and suppose that $\tau_F$ is a (not necessarily Hausdorff) locally solid linear topology on $F$. Take a non-empty subset $S$ of $F$. Then there exists a unique (possibly non-Hausdorff) locally solid linear topology $\unb_S\tau_F$ on $E$ such that, for a net $\net$ in $E$, $x_\alpha\conv{\unb_S\tau_F}0$ if and only if $\abs{x_\alpha}\wedge\abs{s}\conv{\tau_F}0$ for all $s\in S$; see \cite[Theorem~3.1]{deng_de_jeu_UNPUBLISHED:2020a} for this, which extends earlier results in this vein in, e.g., \cite{conradie:2005} and \cite{taylor:2019}. This topology $\unb_S\tau_F$ is called the unbounded topology on $E$ that is generated by $\tau_F$ via $S$. Suppose that $E$ admits a Hausdorff \uoLtop\ $\uoLt_E$. The uniqueness of such a topology then implies that $\unb_E\uoLt_E=\uoLt_E$. In the sequel we shall use this result from \cite{conradie:2005} and \cite{taylor:2019} a few times.

Finally, the characteristic function of a set $S$ will be denoted by $\chi_S$, and the identity operator on a vector space will be denoted by $\Id$.

\section{Absolute strong operator topologies on $\regops(E,F)$}\label{sec:topologies_on_vector_lattices_of_order_bounded_operators}

\noindent Let $E$ and $F$ be vector lattices, where $F$ is Dedekind complete. In this section, we start by showing how topologies can be introduced on vector sublattices of $\regops(E,F)$ that can be regarded as absolute strong operator topologies; see \cref{res:topology_on_order_bounded_operators_practical} and \cref{rem:ASOT_remark}, below. Once this is known to be possible, it is easy to relate this to \oLtops\ and \uoLtops\ on regular vector sublattices of $\regops(E,F)$. In particular, we shall see that every regular vector sublattice of $\regops(E,F)$ admits a \necun\ Hausdorff \uoLtop\ when $F$ admits a Hausdorff \oLtop; see \cref{res:uoLtop_on_regular_sublattices_of_the_order_bounded_operators}, below.

When restricted to the orthomorphisms on a Dedekind complete vector lattice, the picture simplifies; see  \cref{sec:topologies_on_the_orthomorphisms_of_a_vector_lattice}. In particular, the restrictions of absolute strong operator topologies are then simply strong operator topologies.

The construction in the proof of the following result is an adaptation of that in the proof of \cite[Theorem~3.1]{deng_de_jeu_UNPUBLISHED:2020a}. The latter construction is carried out under minimal hypotheses and uses neighbourhood bases at zero as in \cite[proof of Theorem~2.3]{taylor:2019} rather than Riesz pseudo-norms. Such an approach enables one to also understand various `pathologies' in the literature from one central result; see \cite[Example~3.10]{deng_de_jeu_UNPUBLISHED:2020a}. It is for this reason of maximum flexibility that we also choose such a neighbourhood approach here.

\begin{theorem}\label{res:topology_on_order_bounded_operators}
	Let $E$ and $F$ be vector lattices, where $F$ is Dedekind complete, and let $\tau_F$ be a \uppars{not necessarily Hausdorff} locally solid additive topology on $F$. Take a non-empty subset $S$ of $E$.
	There exists a unique \uppars{possibly non-Hausdorff} additive topology $\optop{S}\tau_F$ on $\regops(E,F)$ such that, for a net $\opnet$ in $\regops(E,F)$, $T_\alpha\conv{\optop{S}\tau_F}0$  if and only if $\abs{T_\alpha}\abs{s}\conv{\tau_F}0$ for all $s\in S$.
	
	Let $I_S$ be the ideal of $E$ that is generated by $S$. For a net $\opnet$ in $\regops(E,F)$, $T_\alpha\conv{\optop{S}\tau_F}0$ if and only if $\abs{T_\alpha}\abs{x}\conv{\tau_F}0$ for all $x\in I_S$; and also if and only if  $\abs{T_\alpha}x\conv{\tau_F}0$ for all $x\in I_S$.

Furthermore:
\begin{enumerate}
	\item for every $x\in I_S$, the map $T\mapsto Tx$ is an $\optop{S}\tau_F\text{\textendash}\tau_F$ continuous map from $\regops(E,F)$ into $F$;\label{part:topology_on_operators_1}
	\item the topology $\optop{S}\tau_F$ on $\regops(E,F)$ is a locally solid additive topology;\label{part:topology_on_operators_2}
	\item \label{part:topology_on_operators_3} when $\tau_F$ is a Hausdorff topology on $F$, the following are equivalent for an additive subgroup $\opgroup$ of  $\regops(E,F)$:
	\begin{enumerate}
		\item the restriction $\optop{S}\tau_F|_\opgroup$ of $\optop{S}\tau_F$ to $\opgroup$ is a Hausdorff topology on $\opgroup$;\label{part:topology_on_operators_3a}
		\item $I_S$ separates the points of $\opgroup$.\label{part:topology_on_operators_3b}
	\end{enumerate}
\item\label{part:topology_on_operators_last} the following are equivalent for a linear subspace $\oplin$ of $\regops(E,F)$:
\begin{enumerate}
	\item for all $T\in\oplin$ and $s\in S$, $\abs{\varepsilon T}\abs{s}\conv{\tau_F}0$  as $\varepsilon\to 0$ in $\RR$;\label{part:linear_topology_on_operators_1}
	\item the restriction $\optop{S}\tau_F|_\oplin$ of $\optop{S}\tau_F$ to $\oplin$ is a \uppars{possibly non-Hausdorff} linear topology on $\oplin$.\label{part:linear_topology_on_operators_last}
\end{enumerate}

\end{enumerate}	
\end{theorem}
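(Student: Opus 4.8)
The plan is to follow the construction in \cite[Theorem~3.1]{deng_de_jeu_UNPUBLISHED:2020a}, describing the topology through an explicit base of neighbourhoods of $0$. Since $F$ is Dedekind complete, $\regops(E,F)$ is a Dedekind complete vector lattice by the Riesz--Kantorovich theorem, so $\abs{T}$ is meaningful for $T\in\regops(E,F)$. Fix a base $\mathcal N_0$ of solid neighbourhoods of $0$ in $(F,\tau_F)$, which exists because $\tau_F$ is locally solid, and for a finite set $s_1,\dots,s_n\in S$ and $U\in\mathcal N_0$ put
\[
W(s_1,\dots,s_n;U):=\{\,T\in\regops(E,F):\abs{T}\abs{s_i}\in U\ \text{for}\ i=1,\dots,n\,\}.
\]
First I would check that this family is a base of neighbourhoods of the identity for a group topology on the additive group $\regops(E,F)$: it is a filter base (enlarge the finite set and shrink $U$ within $\mathcal N_0$); every member contains $0$; for a given $W(s_1,\dots,s_n;U)$, choosing $U'\in\mathcal N_0$ with $U'+U'\subseteq U$ and using $\abs{T+T'}\abs{s_i}\le\abs{T}\abs{s_i}+\abs{T'}\abs{s_i}$ together with the solidity of $U$ gives $W(s_1,\dots,s_n;U')+W(s_1,\dots,s_n;U')\subseteq W(s_1,\dots,s_n;U)$; symmetry is clear since $\abs{-T}=\abs{T}$; and the conjugation axiom is vacuous as the group is abelian. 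This produces $\optop{S}\tau_F$. It is the \emph{unique} additive topology with the stated convergence property because an additive topology is determined by its neighbourhood filter at $0$, which is in turn recovered from the nets converging to $0$; and $T_\alpha\conv{\optop{S}\tau_F}0$ exactly when $T_\alpha$ is eventually in every $W(s_1,\dots,s_n;U)$, which, $\mathcal N_0$ being cofinal among the neighbourhoods of $0$ in $F$, is exactly the condition $\abs{T_\alpha}\abs{s}\conv{\tau_F}0$ for all $s\in S$.

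Next I would establish the descriptions using $I_S$ by a sandwiching argument. One inclusion is trivial since $S\subseteq I_S$ and $\abs{s}\in I_S$. For the converse, every $x\in I_S$ satisfies $\abs{x}\le\lambda(\abs{s_1}\vee\dots\vee\abs{s_n})$ for suitable $s_i\in S$ and $\lambda>0$, hence $\abs{T_\alpha}\abs{x}\le\lambda\sum_{i=1}^n\abs{T_\alpha}\abs{s_i}$ (using $R(a\vee b)\le Ra+Rb$ for a positive operator $R$ and $a,b\ge0$), and solidity of the members of $\mathcal N_0$ transfers the convergence; passing between $\abs{T_\alpha}\abs{x}$ and $\abs{T_\alpha}x$ uses $\abs{x}=x^++x^-$ with $x^\pm\in I_S$ and the additivity of $\tau_F$. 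Part~\partref{part:topology_on_operators_1} is then immediate: $T\mapsto Tx$ is a homomorphism of additive topological groups, so it suffices to check continuity at $0$, and there $\abs{Tx}\le\abs{T}\abs{x}$ together with solidity does it. Part~\partref{part:topology_on_operators_2} holds because each $W(s_1,\dots,s_n;U)$ is a solid subset of $\regops(E,F)$: if $\abs{T'}\le\abs{T}$ then $\abs{T'}\abs{s_i}\le\abs{T}\abs{s_i}\in U$, and $U$ is solid.

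For part~\partref{part:topology_on_operators_3}, I would use that a subgroup of a topological group is Hausdorff in the subspace topology exactly when the intersection of the traces on it of the basic neighbourhoods of $0$ is $\{0\}$. Here $T$ lies in every $W(s_1,\dots,s_n;U)$ iff $\abs{T}\abs{s}\in\bigcap\mathcal N_0=\{0\}$ for all $s\in S$, i.e.\ iff $\abs{T}\abs{s}=0$ for all $s\in S$; by the sandwiching above this is the same as $\abs{T}\abs{x}=0$ for all $x\in I_S$, which is equivalent to $Tx=0$ for all $x\in I_S$ (the non-trivial implication being that a $T$ vanishing on the ideal $I_S$ forces $\abs{T}$ to vanish there, via the Riesz--Kantorovich formula $\abs{T}y=\sup\{Tz:\abs{z}\le y\}$ for $y\ge0$). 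Taking traces on the subgroup $\opgroup$ and using $\opgroup-\opgroup\subseteq\opgroup$, triviality of the intersection becomes the statement that every $T\in\opgroup$ vanishing on $I_S$ is zero, i.e.\ that $I_S$ separates the points of $\opgroup$; this is the equivalence of \partref{part:topology_on_operators_3a} and \partref{part:topology_on_operators_3b}.

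Finally, for part~\partref{part:topology_on_operators_last}, the main idea is that local solidity supplies for free everything needed for $\optop{S}\tau_F|_\oplin$ to be a linear topology \emph{except} continuity of the scalar action at points $(0,T)$. Writing $\lambda T-\lambda_0 T_0=(\lambda-\lambda_0)(T-T_0)+\lambda_0(T-T_0)+(\lambda-\lambda_0)T_0$, continuity of $(\lambda,T)\mapsto\lambda T$ reduces to: \textbf{(A)} for each basic neighbourhood $W$ of $0$ and each $M>0$ there is a basic neighbourhood $V$ of $0$ with $\lambda V\subseteq W$ whenever $\abs{\lambda}\le M$; and \textbf{(B)} for each $T$ in the space and each basic $W$ there is $\delta>0$ with $\lambda T\in W$ for $\abs{\lambda}<\delta$. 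Condition (A) is automatic on any linear subspace of $\regops(E,F)$: given $W(s_1,\dots,s_n;U)$ and $M$, pick an integer $k\ge M$ and $U'\in\mathcal N_0$ with the $k$-fold sum $U'+\dots+U'\subseteq U$; then for $T$ with $\abs{T}\abs{s_i}\in U'$ and $\abs{\lambda}\le M\le k$ one has $\abs{\lambda T}\abs{s_i}=\abs{\lambda}\,\abs{T}\abs{s_i}\le k\,\abs{T}\abs{s_i}\in U$, and solidity of $U$ finishes it. Hence $\optop{S}\tau_F|_\oplin$ is a linear topology iff (B) holds on $\oplin$, and since $\abs{\varepsilon T}\abs{s}=\abs{\varepsilon}\,\abs{T}\abs{s}$, condition (B) for the neighbourhood $W(s_1,\dots,s_n;U)$ says precisely that $\abs{\varepsilon T}\abs{s_i}\conv{\tau_F}0$ as $\varepsilon\to0$ for each $i$, i.e.\ condition~\partref{part:linear_topology_on_operators_1}. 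The step I expect to be most delicate is exactly this last bookkeeping: isolating cleanly the part of the topological-vector-space axioms that the finite-sum-plus-solidity trick delivers unconditionally (namely (A)) from the part that genuinely needs the hypothesis in~\partref{part:linear_topology_on_operators_1} (namely (B)).
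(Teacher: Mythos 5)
Your proposal is correct and follows essentially the same route as the paper's proof: an explicit base of solid neighbourhoods of zero built from $\abs{T}\abs{s}\in U$, verification of the additive-topology axioms, the sandwiching argument through $I_S$, the trivial-intersection criterion for Hausdorffness combined with the Riesz--Kantorovich formula (which the paper outsources to a cited proposition), and the absorbing/balanced checks for the linear-topology equivalence (which the paper outsources to a cited criterion for linear topologies, whereas you verify the scalar-multiplication continuity by hand). The only cosmetic difference is that you index the basic neighbourhoods by finite subsets of $S$ while the paper uses single elements of $I_S$; both yield the same topology.
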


\begin{proof}
Suppose that $\tau_F$ is a (not necessarily Hausdorff) locally solid additive topology on $F$.

It is clear from the required translation invariance of $\optop{S}\tau_F$ that it is unique, since the nets that are $\optop{S}\tau_F$-convergent to zero are prescribed.

For its existence, we take a $\tau_F$-neighbourhood base $\{U_\lambda\}_{\lambda\in\Lambda}$ of zero in $F$ that consists of solid subsets of $F$. For $x\in I_S$ and $\lambda\in\Lambda$, we set
\[
V_{\lambda,x}\coloneqq \{\, T\in\regops(E,F): \abs{T}\abs{x}\in U_\lambda\,\}.
\]
The $V_{\lambda,x}$'s are solid subsets of $\regops(E,F)$ since the $U_\lambda$ are solid subsets of $F$.

Set
\[
\mathcal N_0\coloneqq \{\,V_{\lambda,x}: \lambda\in\Lambda,x \in I_S \,\}.
\]

We shall now verify that $\mathcal N_0$ satisfies the necessary and sufficient conditions in \cite[Theorem~3 on p.~46]{husain_INTRODUCTION-TO_TOPOLOGICAL_GROUPS:1966} to be a base of neighbourhoods of zero for an additive topology on $\regops(E,F)$.

Take $V_{\lambda_1,x_1}, V_{\lambda_2,x_2}\in\mathcal N_0$. There exists a $\lambda_3\in\Lambda$ such that $U_{\lambda_3}\subseteq U_{\lambda_1}\cap U_{\lambda_2}$, and it is easy to verify that then $V_{\lambda_3,\abs{x_1}\vee\abs{x_2}}\subseteq V_{\lambda_1,x_1}\cap V_{\lambda_2,x_2}$. Hence $\mathcal N_0$ is a filter base.

It is clear that $V_{\lambda,x}=- V_{\lambda,x}$.

Take $V_{\lambda,x}\in\mathcal N_0$. There exists a $\mu\in\Lambda$ such that $U_\mu+U_\mu\subseteq U_\lambda$, and it is easy to see that then $V_{\mu,x}+V_{\mu,x}\subseteq V_{\lambda,x}$.

 An appeal to \cite[Theorem~3 on p.~46]{husain_INTRODUCTION-TO_TOPOLOGICAL_GROUPS:1966} now yields that $\mathcal N_0$ is a base of neighbourhoods of zero for an additive topology on $\regops(E,F)$ that we shall denote by $\optop{S}\tau_F$. It is a direct consequence of its definition that, for a net  $\opnet$ in $\regops(E,F)$, $T_\alpha\conv{\optop{S}\tau_F}0$ if and only if $\abs{T_\alpha}\abs{x}\conv{\tau_F}0$ for all $x\in I_S$. Using the fact that $\tau_F$ is a locally solid additive topology on $F$, it is routine to verify that the latter condition is equivalent to the condition that  $\abs{T}x\conv{\tau_F} 0$ for all $x\in I_S$, as well as to the condition that $\abs{T_\alpha}\abs{s}\conv{\tau_F}0$ for all $s\in S$.

 We turn to the statements in the parts~\partref{part:topology_on_operators_1}--\partref{part:topology_on_operators_last}.

 For part~\partref{part:topology_on_operators_1}, suppose that $\opnet$ is a net in $\regops(E,F)$ such that $T_\alpha\conv{\optop{S}\tau_F}0$. Then $\abs{T_\alpha}\abs{x}\conv{\tau_F} 0$ for all $x\in I_S$. Since $\abs{T_\alpha x}\leq  \abs{T_\alpha}\abs{x}$, the fact that $\tau_F$ is locally solid implies that then also $T_\alpha x\conv{\tau_F}0$ for all $x\in I_S$.

Since the topology $\optop{S} \tau_F$ is a locally solid additive topology on $\regops(E,F)$ by construction, part~\partref{part:topology_on_operators_2} is clear.

 For part~\partref{part:topology_on_operators_3}, we recall from \cite[p.~48, Theorem~4]{husain_INTRODUCTION-TO_TOPOLOGICAL_GROUPS:1966} that an additive topology on a group is Hausdorff if and only if the intersection of the elements of a neighbourhood base of zero is trivial. Using this for $F$ in the second step, and invoking \cite[Proposition~2.1]{deng_de_jeu_UNPUBLISHED:2020a} in the third, we see that
 \begin{align*}
 \bigcap_{\lambda\in\Lambda,x\in I_S} \left(V_{\lambda,x}\cap \opgroup\right)&=\{\,T\in\regops(E,F):\abs{T}\abs{x}\in\bigcap_{\lambda\in\Lambda}U_\lambda\text{ for all }x\in I_S\,\}\cap \opgroup\\
&= \{\,T\in\regops(E,F):\abs{T}\abs{x}=0\text{ for all }x\in I_S\,\}\cap \opgroup.\\
&=\{\,T\in\regops(E,F): Tx=0\text{ for all }x\in I_S\,\}\cap\opgroup\\
&=\{\,T\in\opgroup: Tx=0\text{ for all }x\in I_S\,\}.
 \end{align*}
Another appeal to \cite[p.~48, Theorem~4]{husain_INTRODUCTION-TO_TOPOLOGICAL_GROUPS:1966} then completes the proof of part~\partref{part:topology_on_operators_3}.

We prove that part~\partref{part:linear_topology_on_operators_1} implies part~ \partref{part:linear_topology_on_operators_last}. It is clear that $\optop{S}\tau_F|_\oplin$ is an additive topology on $\oplin$.
From what we have already established, we know that the assumption implies that also $\abs{\varepsilon T}\abs{x}\conv{\tau_F}0$ as $\varepsilon\to 0$ in $\RR$ for all $T\in\oplin$ and $x\in I_S$.
Fix $\lambda\in\Lambda$ and $x\in I_S$, and take $T\in\oplin$. Since $\abs{\varepsilon T}\abs{x}\conv{\tau_F}0$  as $\varepsilon\to 0$ in $\RR$, there exists a $\delta>0$ such that $\abs{\varepsilon T}\abs{x}\in U_{\lambda}$ whenever $\abs{\varepsilon}<\delta$. That is, $\varepsilon T\in V_{\lambda,x}\cap\oplin$ whenever $\abs{\varepsilon}<\delta$. Hence $V_{\lambda,x}\cap \oplin$ is an  absorbing subset of $\oplin$. Furthermore, since $V_{\lambda,x}$ is a solid subset of $\regops(E,F)$, it is clear that $\varepsilon T\in V_{\lambda,x}\cap\oplin$ whenever $T\in V_{\lambda,x}\cap \oplin$ and $\varepsilon\in[-1,1]$. We conclude from \cite[Theorem~5.6]{aliprantis_border_INFINITE_DIMENSIONAL_ANALYSIS_THIRD_EDITION:2006} that  $\optop{S}\tau_F|_\oplin$ is a linear topology on $\oplin$.

We prove that part~\partref{part:linear_topology_on_operators_last} implies part~\partref{part:linear_topology_on_operators_1}. Take $T\in\oplin$. Then $\varepsilon T\conv{\optop{S}\tau_F|_\oplin}0$ as $\varepsilon\to 0$ in $\RR$. By construction, this implies that (and is, in fact, equivalent to) the fact that $\abs{\varepsilon T}\abs{s}\conv{\tau_F}0$ for all $s\in S$.

\end{proof}	

\begin{remark}
	It is clear from the convergence criteria for nets that the topologies $\optop{S_1}\tau_F$ and $\optop{S_2}\tau_F$ are equal when $I_{S_1}=I_{S_2}$. One could, therefore, work with ideals from the very start, but it seems worthwhile to keep track of a smaller set of presumably more manageable `test vectors'. See also the comments preceding \cref{res:uoLtops_on_lattices_of_operators_most_precise}, below.
\end{remark}

\begin{remark}\label{rem:uniform_convergence_on_order_intervals}
Suppose that $\opnet$ is a net in $\regops(E,F)$ such that $T_\alpha\conv{\optop{S}\tau_F}0$. It is easy to see that then $\abs{T_\alpha} x\conv{\tau_F}0$ uniformly on every order bounded subset of $I_S$, so that then also $T_\alpha x\conv{\tau_F}0$ uniformly on every order bounded subset of $I_S$.
\end{remark}

\begin{definition}
	The topology $\optop{S}\tau_F$ in \cref{res:topology_on_order_bounded_operators}  is called the \emph{absolute strong operator topology that is generated by $\tau_F$ via $S$}. We shall comment on this nomenclature in \cref{rem:ASOT_remark}, below.
\end{definition}

The following result, which can also be obtained using Riesz pseudo-norms, is clear from \cref{res:topology_on_order_bounded_operators}.

\begin{corollary}\label{res:topology_on_order_bounded_operators_practical}
	Let $E$ and $F$ be vector lattices, where $F$ is Dedekind complete, and let $\tau_F$ be a \uppars{not necessarily Hausdorff} locally solid linear topology on $F$. Take a vector sublattice  $\oplat$ of $\regops(E,F)$ and a non-empty subset $S$ of $E$.
	
	There exists a unique additive topology $\optop{S}\tau_F$ on $\oplat$ such that, for a net $\opnet$ in $\oplat$, $T_\alpha\conv{\optop{S}\tau_F}0$ if and only if $\abs{T_\alpha}\abs{s}\conv{\tau_F}0$ for all $s\in S$.
	
	Let $I_S$ be the ideal of $E$ that is generated by $S$.
	For a net $\opnet$ in $\oplat$, $T_\alpha\!\conv{\optop{S}\tau_F}0$ if and only if $\abs{T_\alpha}\abs{x}\conv{\tau_F}0$ for all $x\in I_S$; and also if and only if  $\abs{T_\alpha}x\conv{\tau_F}0$ for all $x\in I_S$.
	
	Furthermore:
	\begin{enumerate}
		\item for every $x\in I_S$, the map $T\mapsto Tx$ is an $\optop{S}\tau_F\text{\textendash}\tau_F$ continuous map from $\oplat$ into $F$;\label{part:topology_on_operators_practial_1}
		\item the additive topology $\optop{S}\tau_F$ on the group $\oplat$ is, in fact, a locally solid linear  topology on the vector lattice $\oplat$.  When $\tau_F$ is a Hausdorff topology on $F$, then $\optop{S}\tau_F$ is a Hausdorff topology on $\oplat$ if and only if $I_S$ separates the points of $\oplat$.\label{part:topology_on_operators_practical_2}
	\end{enumerate}
\end{corollary}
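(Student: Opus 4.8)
The plan is to obtain everything by restriction from \cref{res:topology_on_order_bounded_operators}. First I would note that a (not necessarily Hausdorff) locally solid linear topology on $F$ is, in particular, a (not necessarily Hausdorff) locally solid additive topology on $F$, so \cref{res:topology_on_order_bounded_operators} applies verbatim and supplies an additive topology $\optop{S}\tau_F$ on $\regops(E,F)$ with the asserted convergence criteria in terms of $S$ and of $I_S$, together with its parts \partref{part:topology_on_operators_1}--\partref{part:topology_on_operators_last}. I would then equip the vector sublattice $\oplat$ with the subspace topology inherited from $\optop{S}\tau_F$ and again denote it by $\optop{S}\tau_F$. Since a net in $\oplat$ converges to zero for this subspace topology precisely when it converges to zero in $\regops(E,F)$, the three convergence criteria carry over without change. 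For uniqueness, I would invoke the same elementary observation already used at the start of the proof of \cref{res:topology_on_order_bounded_operators}: the nets converging to zero determine the closed subsets of the additive group $\oplat$, and translation invariance then pins down the whole topology, so there is at most one additive topology on $\oplat$ with the prescribed convergent-to-zero nets.

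Part~\partref{part:topology_on_operators_practial_1} is then immediate: for $x\in I_S$ the map $T\mapsto Tx$ is $\optop{S}\tau_F\text{\textendash}\tau_F$ continuous on $\regops(E,F)$ by \cref{res:topology_on_order_bounded_operators}\partref{part:topology_on_operators_1}, and its restriction to $\oplat$ is continuous for the subspace topology.

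For part~\partref{part:topology_on_operators_practical_2} I would proceed in two steps. To see that $\optop{S}\tau_F$ is locally solid on $\oplat$, take the base of solid $\optop{S}\tau_F$-neighbourhoods of zero in $\regops(E,F)$ provided by \cref{res:topology_on_order_bounded_operators}\partref{part:topology_on_operators_2} and intersect each with $\oplat$; this is a neighbourhood base of zero for the subspace topology, and solidity is inherited because $\oplat$ is a vector sublattice, so any element of $\oplat$ dominated in modulus by a member of $U\cap\oplat$ lies again in $U\cap\oplat$. Linearity is exactly where the upgrade from an additive to a linear topology on $F$ is used: I would apply \cref{res:topology_on_order_bounded_operators}\partref{part:topology_on_operators_last} with $\oplin\coloneqq\oplat$, so that it suffices to verify condition~\partref{part:linear_topology_on_operators_1}, namely that $\abs{\varepsilon T}\abs{s}=\abs{\varepsilon}\abs{T}\abs{s}\tauconv 0$ as $\varepsilon\to 0$ in $\RR$ for every $T\in\oplat$ and $s\in S$; this is immediate from the continuity of scalar multiplication $\RR\times F\to F$ for the linear topology $\tau_F$. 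Then \cref{res:topology_on_order_bounded_operators}\partref{part:linear_topology_on_operators_last} yields that $\optop{S}\tau_F$ restricted to $\oplat$ is a linear topology, and together with the previous step it is a locally solid linear topology on $\oplat$. Finally, when $\tau_F$ is Hausdorff I would apply \cref{res:topology_on_order_bounded_operators}\partref{part:topology_on_operators_3} with the additive subgroup $\opgroup\coloneqq\oplat$ to conclude that $\optop{S}\tau_F$ is Hausdorff on $\oplat$ if and only if $I_S$ separates the points of $\oplat$.

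I do not expect a genuine obstacle: the statement really is a corollary. The only points requiring a moment's care are the routine check that the subspace topology on the vector sublattice $\oplat$ is again locally solid, and the observation that the hypothesis that $\tau_F$ be linear is precisely what makes condition~\partref{part:linear_topology_on_operators_1} of \cref{res:topology_on_order_bounded_operators} automatic, so that no extra condition on $\oplat$ is needed.
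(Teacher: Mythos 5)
Your proposal is correct and follows exactly the route the paper intends: the paper gives no written proof, stating only that the corollary ``is clear from'' \cref{res:topology_on_order_bounded_operators}, and your argument is a faithful filling-in of that derivation (restriction to the sublattice, inheritance of the convergence criteria and of local solidity, part~\partref{part:topology_on_operators_last} of the theorem with the continuity of scalar multiplication for the linear topology $\tau_F$ supplying condition~\partref{part:linear_topology_on_operators_1}, and part~\partref{part:topology_on_operators_3} for the Hausdorff criterion). No gaps.
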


\begin{remark}
	Although in the sequel of this paper we shall mainly be interested in the nets that are convergent in a given topology, let us still remark that is possible to describe an explicit $\optop{S}\tau_F$-neighbourhood base of zero in $\oplat$.
	Take a $\tau_F$-neighbourhood base $\{U_\lambda\}_{\lambda\in\Lambda}$ of zero in $F$ that consists of solid subsets of $F$. For $\lambda\in\Lambda$ and $x\in I_S$, set
	\[
	V_{\lambda,x}\coloneqq \{\, T\in\oplat: \abs{T}\abs{x}\in U_\lambda\,\}.
	\]
	Then $\{\,V_{\lambda,x}: \lambda\in\Lambda,x \in I_S \,\}$ is an $\optop{S}\tau_F$-neighbourhood base of zero in $\oplat$.
\end{remark}

\begin{remark}\label{rem:ASOT_remark}
	It is not difficult to see that $\optop{S}\tau_F$ is the weakest \emph{locally solid} linear topology $\tau_\oplat$ on $\oplat$ such that, for every  $x\in I_S$, the map $T\to Tx$ is a $\tau_{\oplat}\text{\textendash}\tau_F$ continuous map from $\oplat$ into $F$. It is also the weakest linear topology $\tau_\oplat^\prime$ on $\oplat$ such that, for every  $x\in I_S$, the map $T\to \abs{T}x$ is a $\tau_\oplat^\prime \text{\textendash}\tau_F$ continuous map from $\oplat$ into $F$. The latter characterisation is our motivation for the name `absolute strong operator topology'.
	
	Take $F=\RR$ and $S=E$. Then $\optop{E}\tau_\RR$ is what is commonly known as the absolute weak$^\ast$-topology on $\odual{E}$. There is an unfortunate clash of `weak' and `strong' here that appears to be unavoidable.
\end{remark}

\begin{remark}
	For comparison with \cref{rem:ASOT_remark}, and to make clear the role of the local solidness of the topologies in the present section, we mention the following, which is an easy consequence of \cite[Theorem~5.6]{aliprantis_border_INFINITE_DIMENSIONAL_ANALYSIS_THIRD_EDITION:2006}, for example. Let $E$ and $F$ be vector spaces, where $F$ is supplied with a (not necessarily) Hausdorff linear topology $\tau_F$. Take a linear subspace $\oplat$ of the vector space of all linear maps from $E$ into $F$, and take a non-empty subset $S$ of $E$. Then there exists a unique (not necessarily Hausdorff) linear topology $\mathrm{SOT}_S\tau_F$ on $\mathcal E$ such that, for a net $\opnet$ in $\oplat$, $T_\alpha\conv {\mathrm{SOT}_S\tau_F} 0$ if and only if $T_\alpha s\conv{\tau_F} 0$ for all $s\in S$. The subsets of $\oplat$ of the form $\bigcap_{i=1}^n\{\,T\in\oplat : Ts_i\in V_{\lambda_i}\,\}$, where the $s_i$ run over $S$ and the $V_{\lambda_i}$ run over a balanced $\tau_F$-neighbourhood base $\{\,V_\lambda: \lambda\in\Lambda\,\}$ of zero in $F$, are an $\mathrm{SOT}_S\tau_F$-neighbourhood base of zero in $\oplat$. When $\tau_F$ is Hausdorff, then $\mathrm{SOT}_S\tau_F$ is Hausdorff if and only if $S$ separates the points of $\oplat$.  This strong operator topology $\mathrm{SOT}_S\tau_F$ on $\oplat$ that is generated by $\tau_F$ via $S$, is the weakest linear topology $\tau_\oplat$ on $\oplat$ such that, for every $s\in S$, the map $T\mapsto Tx$ is $\tau_\oplat\text{\textendash}\tau_F$-continuous.
\end{remark}

\section{\oLtops\ and \uoLtops\ on vector lattices of operators}\label{sec:lebesgue_topologies_on_vector_lattices_of_operators}

\noindent
To arrive at results concerning \oLtops\ and \uoLtops\ on regular vector sublattices of operators, we need a preparatory result for which we are not aware of a reference. Given its elementary nature, we refrain from any claim to originality. It will re-appear at several places in the sequel.

\begin{lemma}\label{res:o_to_o}
	Let $E$ and $F$ be vector lattices, where $F$ is Dedekind complete, and let $\oplat$ be a regular vector sublattice of $\regops(E,F)$. Suppose that $\opnet$ is net in $\oplat$ such that $T_\alpha\oconv 0$ in $\oplat$. Then $T_\alpha x\oconv 0$ for all $x\in E$.
\end{lemma}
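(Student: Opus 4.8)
The plan is to unwind the definition of order convergence in $\oplat$, move the dominating decreasing net over to $\regops(E,F)$ using regularity, and then evaluate at a fixed vector.

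First I would recall that $T_\alpha\oconv 0$ in $\oplat$ supplies a net $(S_\beta)_{\beta\in\istwo}$ in $\oplat$ with $S_\beta\downarrow 0$ in $\oplat$ such that for every $\beta_0\in\istwo$ there is an $\alpha_0\in\is$ with $\abs{T_\alpha}\leq S_{\beta_0}$ whenever $\alpha\geq\alpha_0$. Since $\oplat$ is a regular vector sublattice of $\regops(E,F)$, its inclusion map is order continuous, so $S_\beta\downarrow 0$ in $\oplat$ gives $S_\beta\oconv 0$ in $\regops(E,F)$; and because $(S_\beta)$ is decreasing with each $S_\beta\geq 0$, a net that decreases and order-converges to $0$ has infimum $0$, so in fact $S_\beta\downarrow 0$ in $\regops(E,F)$.

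Next, fixing $x\in E$ and setting $y\coloneqq\abs{x}\in\posE$, I would use that $F$ is Dedekind complete: then $\regops(E,F)$ is Dedekind complete and the infima of downward directed families of order bounded operators are computed pointwise on the positive cone in the Riesz--Kantorovich sense, whence $S_\beta y\downarrow 0$ in $F$ (see, e.g., \cite{aliprantis_burkinshaw_POSITIVE_OPERATORS_SPRINGER_REPRINT:2006}). It then remains to combine the estimates: for every $\beta_0\in\istwo$ and every $\alpha\geq\alpha_0$ we have $\abs{T_\alpha x}\leq\abs{T_\alpha}\abs{x}=\abs{T_\alpha}y\leq S_{\beta_0}y$, so the net $(S_\beta y)_{\beta\in\istwo}$ witnesses $T_\alpha x\oconv 0$ in $F$.

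The main obstacle --- the only step that is not purely formal --- is the fact that a downward directed net of order bounded operators with infimum $0$ evaluates at a positive vector to a net in $F$ with infimum $0$; this is precisely where Dedekind completeness of $F$ is used. If one prefers not to quote it, I would prove it directly in a couple of lines: given $g\in F$ with $0\leq g\leq S_\beta y$ for all $\beta$, one extends $g$ to a positive operator in $\regops(E,F)$ that is dominated by every $S_\beta$ (working on the ideal generated by $y$), which must then be $0$, forcing $g=0$.
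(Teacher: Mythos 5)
Your proof is correct and follows essentially the same route as the paper's: both transfer the dominating decreasing net to $\regops(E,F)$ via regularity, invoke \cite[Theorem~1.18]{aliprantis_burkinshaw_POSITIVE_OPERATORS_SPRINGER_REPRINT:2006} (the pointwise computation of infima of downward directed nets of operators into a Dedekind complete codomain) to get $S_\beta\abs{x}\downarrow 0$ in $F$, and conclude from $\abs{T_\alpha x}\leq\abs{T_\alpha}\abs{x}\leq S_{\beta_0}\abs{x}$. The only cosmetic difference is that the paper first restates $T_\alpha\oconv 0$ in $\obops(E,F)$ and picks the dominating net there directly, whereas you pick it in $\oplat$ and then push it forward; both are fine.
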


\begin{proof}
	By the regularity of $\oplat$, we also have that $T_\alpha\oconv 0$ in $\obops(E,F)$. Hence there exists a net $\netgen{S_\beta}{\beta\in\istwo}$ in $\regops(E,F)$ such that $S_\beta\downarrow 0$ in $\regops(E,F)$ and with the property that, for every $\beta_0\in\istwo$, there exists an $\alpha_0\in\is$ such that $\abs{T_\alpha}\leq S_{\beta_0}$ for all $\alpha\in\is$ such that $\alpha\geq\alpha_0$. We know from \cite[Theorem 1.18]{aliprantis_burkinshaw_POSITIVE_OPERATORS_SPRINGER_REPRINT:2006}, for example, that $S_\beta x\downarrow 0$ for all $x\in \posE$. Since $\abs{T_\alpha x}\leq \abs{T_\alpha} x$ for $x\in\posE$, it then follows easily that $T_\alpha x\oconv 0$ for all $x\in\posE$. Hence $T_\alpha x\oconv 0$ for all $x\in E$.
\end{proof}

We can now show that the \oL\ property of a locally solid linear topology on the Dedekind complete codomain is inherited by the associated absolute strong operator topology on a regular vector sublattice of operators.

\begin{proposition}\label{res:oLtop_on_vector_lattice_of_operators}
	Let $E$ and $F$ be vector lattices, where $F$ is Dedekind complete. Suppose that $F$ admits an \oLtop\ $\tau_F$. Take a regular vector sublattice  $\oplat$ of $\regops(E,F)$ and a non-empty subset $S$ of $E$.
	Then $\optop{S}\tau_F$ is an \oLtop\ on $\oplat$. When $\tau_F$ is a Hausdorff topology on $F$, then $\optop{S}\tau_F$ is a Hausdorff topology on $\oplat$ if and only if $I_S$ separates the points of $\oplat$.
\end{proposition}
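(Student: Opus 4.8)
The plan is to observe that almost all the work has already been done. By \cref{res:topology_on_order_bounded_operators_practical}, the topology $\optop{S}\tau_F$ on $\oplat$ is already known to be a locally solid linear topology, and the asserted characterisation of when it is Hausdorff (in the case that $\tau_F$ is Hausdorff) is verbatim part~\partref{part:topology_on_operators_practical_2} of that corollary. Hence the only thing that remains to be checked is the defining property of an \oLtop: for a net $\opnet$ in $\oplat$, the fact that $T_\alpha\oconv 0$ in $\oplat$ should imply that $T_\alpha\conv{\optop{S}\tau_F}0$.

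To prove this, I would first pass to the modulus. If $T_\alpha\oconv 0$ in $\oplat$, then by definition there is a net $\netgen{S_\beta}{\beta\in\istwo}$ in $\oplat$ with $S_\beta\downarrow 0$ and such that, for every $\beta_0\in\istwo$, eventually $\abs{T_\alpha}\leq S_{\beta_0}$; since $\bigabs{\,\abs{T_\alpha}\,}=\abs{T_\alpha}$, the very same witnessing net shows that $\abs{T_\alpha}\oconv 0$ in $\oplat$ as well. Now apply \cref{res:o_to_o} to the net $\netgen{\abs{T_\alpha}}{\alpha\in\is}$ in $\oplat$: it yields that $\abs{T_\alpha}x\oconv 0$ in $F$ for every $x\in E$. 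In particular, $\abs{T_\alpha}\abs{s}\oconv 0$ in $F$ for every $s\in S$.

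Since $\tau_F$ is an \oLtop\ on $F$, order convergence to zero in $F$ implies $\tau_F$-convergence to zero, so $\abs{T_\alpha}\abs{s}\conv{\tau_F}0$ for all $s\in S$. By the convergence criterion for $\optop{S}\tau_F$ in \cref{res:topology_on_order_bounded_operators_practical}, this is exactly the statement that $T_\alpha\conv{\optop{S}\tau_F}0$. Therefore $\optop{S}\tau_F$ is an \oLtop\ on $\oplat$, which together with the already available Hausdorff dichotomy completes the proof. There is no genuinely hard step here: the substance sits in \cref{res:o_to_o} and, beneath it, in the classical fact that an order null net of operators dominated by a net decreasing to zero is pointwise order null; the only point deserving a moment's attention is that \cref{res:o_to_o} is stated for $T_\alpha x$ rather than $\abs{T_\alpha}\abs{x}$, which is why one replaces $\opnet$ by $\netgen{\abs{T_\alpha}}{\alpha\in\is}$ at the outset (equivalently, one could re-run the short argument in the proof of \cref{res:o_to_o}, dominating $\abs{T_\alpha}\abs{s}$ by $S_{\beta_0}\abs{s}$ and using $S_\beta\abs{s}\downarrow 0$).
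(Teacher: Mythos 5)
Your proposal is correct and follows essentially the same route as the paper's proof: reduce to the \oL\ property via \cref{res:topology_on_order_bounded_operators_practical}, pass to $\abs{T_\alpha}\oconv 0$, apply \cref{res:o_to_o} to get $\abs{T_\alpha}\abs{s}\oconv 0$ in $F$, and finish with the \oL\ property of $\tau_F$ and the convergence criterion. The extra care you take about applying \cref{res:o_to_o} to the moduli rather than to the $T_\alpha$ themselves is exactly what the paper does implicitly.
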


\begin{proof}
	In view of \cref{res:topology_on_order_bounded_operators_practical}, we merely need to show that, for a net $\opnet$ in $\oplat$, the fact that $T_\alpha\oconv 0$ in $\oplat$ implies that $T_\alpha\conv{\optop{S}\tau_F}0$. Take $s\in S$. Since also $\abs{T_\alpha}\oconv 0$ in $\oplat$, \cref{res:o_to_o} implies that $\abs{T_\alpha}\abs{s}\oconv 0$ in $F$. Using that $\tau_F$ is an \oLtop\ on $F$, we find that $\abs{T_\alpha}\abs{s}\conv{\tau_F} 0$. Since this holds for all $s\in S$, \cref{res:topology_on_order_bounded_operators_practical} shows that $T_\alpha\conv{\optop{S}\tau_F}0$ in $\oplat$.
	\end{proof}

We conclude by showing that every regular vector sublattice of $\regops(E,F)$ admits a \necun\ Hausdorff \uoLtop\ when the Dedekind complete codomain $F$ admits a Hausdorff \oLtop. It is the unbounded topology that is associated with the members of a family of absolute strong operator topologies on the vector sublattice, with all members yielding the same result. Our most precise result in this direction is the following. The convergence criterion in part~(2) is a `minimal one' that is convenient when one wants to show that a net is convergent, whereas the criterion in part~(3) maximally exploit the known convergence of a net.

\begin{theorem}\label{res:uoLtops_on_lattices_of_operators_most_precise}
	Let $E$ and $F$ be vector lattices, where $F$ is Dedekind complete. Suppose that $F$ admits an \oLtop\ $\tau_F$. Take a regular vector sublattice  $\oplat$ of $\regops(E,F)$, a non-empty subset $\opset$ of $\oplat$, and a non-empty subset $S$ of $E$.

Then $\unb_\opset\optop{S}\tau_F$ is a \uoLtop\ on $\oplat$.

We let $I_S$ denote the ideal of $E$ that is generated by $S$, and $I_\opset$ the ideal of $\oplat$ that is generated by $\opset$. For a net $\opnet$ in $\oplat$, the following are equivalent:
\begin{enumerate}
	\item $T_\alpha\conv{\unb_\opset\optop{S}\tau_F}0$;
	\item $(\abs{T_\alpha}\wedge \abs{T})\abs{s}\conv{\tau_F} 0$ for all $T\in\opset$ and $s\in S$;
	\item $(\abs{T_\alpha}\wedge \abs{T})x\conv{\tau_F} 0$ for all $T\in I_\opset$ and $x\in I_S$.
\end{enumerate}

Suppose that $\tau_F$ is actually a \emph{Hausdorff} \oLtop\ on $F$. Then the following are equivalent:
\begin{enumerate}
	\item[(i)] $\unb_\opset\optop{S}\tau_F$ is a \necun\ \emph{Hausdorff} \uoLtop\ on~$\oplat$;
	\item[(ii)] $I_S$ separates the points of $\oplat$ and $I_\opset$ is order dense in $\oplat$.
\end{enumerate}

In that case, the Hausdorff \uoLtop\ $\unb_\opset\optop{S}\tau_F$ on $\oplat$ is the restriction of the \necun\ Hausdorff \uoLtop\ on $\regops(E,F)$, i.e., of $\unb_{\regops(E,F)}\optop{E}\tau_F$, and the criteria in~\uppars{1},~\uppars{2}, and~\uppars{3} are also equivalent to:
\begin{enumerate}
\item[(4)] $(\abs{T_\alpha}\wedge \abs{T})x\conv{\tau_F} 0$ for all $T\in\regops(E,F)$ and $x\in E$.
\end{enumerate}
\end{theorem}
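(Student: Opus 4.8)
The plan is to derive the whole statement from \cref{res:topology_on_order_bounded_operators_practical,res:oLtop_on_vector_lattice_of_operators}, the defining convergence criterion for unbounded topologies recalled in \cref{sec:preliminaries}, and the uniqueness of Hausdorff \uoLtops. To begin with, \cref{res:oLtop_on_vector_lattice_of_operators} gives that $\optop{S}\tau_F$ is an \oLtop\ on $\oplat$; in particular it is a locally solid linear topology on $\oplat$, so the unbounded construction from \cref{sec:preliminaries} applies with $\oplat$ in the roles of both the ambient vector lattice and the ideal, producing the locally solid linear topology $\unb_\opset\optop{S}\tau_F$ on $\oplat$. That it is a \uoLtop\ follows by a two-line argument: if $\opnet$ is a net in $\oplat$ with $T_\alpha\uoconv 0$ in $\oplat$, then $\abs{T_\alpha}\wedge\abs{T}\oconv 0$ in $\oplat$ for every $T\in\opset$ by the definition of uo-convergence, hence $\abs{T_\alpha}\wedge\abs{T}\conv{\optop{S}\tau_F}0$ since $\optop{S}\tau_F$ is an \oLtop\ on $\oplat$, and therefore $T_\alpha\conv{\unb_\opset\optop{S}\tau_F}0$ by the defining criterion for $\unb_\opset\optop{S}\tau_F$.

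The equivalence of statements~(1)--(3) I would set up as a chain of rewritings. By the defining criterion for $\unb_\opset\optop{S}\tau_F$, (1) holds if and only if $\abs{T_\alpha}\wedge\abs{T}\conv{\optop{S}\tau_F}0$ for every $T\in\opset$. Applying \cref{res:topology_on_order_bounded_operators_practical} to the positive net $\netgen{\abs{T_\alpha}\wedge\abs{T}}{\alpha\in\is}$ (for which the outer absolute value may be dropped), the criterion phrased in terms of $s\in S$ converts this into~(2), while the criterion phrased in terms of $x\in I_S$ converts it into the statement that $(\abs{T_\alpha}\wedge\abs{T})x\conv{\tau_F}0$ for all $T\in\opset$ and $x\in I_S$. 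To pass from $\opset$ to the ideal $I_\opset$ and obtain~(3), I would use the identity $\unb_\opset\optop{S}\tau_F=\unb_{I_\opset}\optop{S}\tau_F$; this expresses the standard fact that an unbounded topology depends only on the ideal generated by its test set, and follows routinely from the Riesz inequalities $a\wedge(b_1+\dots+b_k)\leq(a\wedge b_1)+\dots+(a\wedge b_k)$ and $b_1\vee\dots\vee b_k\leq b_1+\dots+b_k$ for positive elements together with the linearity and local solidness of $\optop{S}\tau_F$ (compare \cite[Theorem~3.1]{deng_de_jeu_UNPUBLISHED:2020a}).

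Now suppose $\tau_F$ is a Hausdorff \oLtop. The parenthetical `necessarily unique' in~(i) is automatic once Hausdorffness is known, by the uniqueness of Hausdorff \uoLtops\ recalled in \cref{sec:preliminaries}, so~(i) is the assertion that $\unb_\opset\optop{S}\tau_F$ is Hausdorff. I would prove $(\mathrm i)\Leftrightarrow(\mathrm{ii})$ by combining two facts. Since $\tau_F$ is now Hausdorff, \cref{res:oLtop_on_vector_lattice_of_operators} gives that $\optop{S}\tau_F$ is Hausdorff on $\oplat$ if and only if $I_S$ separates the points of $\oplat$. On the other hand, for any locally solid linear topology $\sigma$ on $\oplat$ the unbounded topology $\unb_\opset\sigma$ is Hausdorff if and only if $\sigma$ is Hausdorff and the disjoint complement $\opset^\perp$ in $\oplat$ equals $\{0\}$: intersecting an $\unb_\opset\sigma$-neighbourhood base of $0$ produces the set of all $T\in\oplat$ with $\abs{T}\wedge\abs{T'}$ lying in the $\sigma$-closure of $\{0\}$ for every $T'\in\opset$, and, since that closure is a solid subspace of $\oplat$, this set is trivial exactly when both conditions hold (compare \cite[Theorem~3.1]{deng_de_jeu_UNPUBLISHED:2020a}). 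As $\opset^\perp=(I_\opset)^\perp$ and an ideal of an Archimedean vector lattice is order dense precisely when its disjoint complement is trivial, $\opset^\perp=\{0\}$ is exactly the requirement that $I_\opset$ be order dense in $\oplat$, and $(\mathrm i)\Leftrightarrow(\mathrm{ii})$ follows.

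Finally, still under the hypotheses of~(i)/(ii), I would specialise everything proved so far to $\oplat=\opset=\regops(E,F)$ and $S=E$: there $I_E=E$ trivially separates the points of $\regops(E,F)$ and $\regops(E,F)$ is order dense in itself, so $\unb_{\regops(E,F)}\optop{E}\tau_F$ is \emph{the} Hausdorff \uoLtop\ on $\regops(E,F)$. Because $\oplat$ is a regular vector sublattice of $\regops(E,F)$, uo-convergence of nets in $\oplat$ agrees with uo-convergence in $\regops(E,F)$ (see \cref{sec:preliminaries}), so the restriction of $\unb_{\regops(E,F)}\optop{E}\tau_F$ to $\oplat$ is again a Hausdorff \uoLtop\ on $\oplat$, and by uniqueness it coincides with $\unb_\opset\optop{S}\tau_F$. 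Unwinding $\unb_{\regops(E,F)}\optop{E}\tau_F$ through its defining criterion and \cref{res:topology_on_order_bounded_operators_practical} with $I_E=E$, and using that convergence of a net in $\oplat$ in this topology is the same as convergence in its restriction to $\oplat$, rewrites~(1) as~(4). The steps I expect to be most delicate are the Hausdorffness criterion for an unbounded topology together with the translation of $\opset^\perp=\{0\}$ into order density of $I_\opset$, and making sure that the `restriction to a regular sublattice' and `necessarily unique' steps genuinely reduce to results already at hand; the lattice inequalities behind $(2)\Leftrightarrow(3)$ are routine but need care because positive operators are applied to vector-lattice inequalities.
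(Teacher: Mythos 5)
Your proposal is correct and follows essentially the same route as the paper's proof: use \cref{res:oLtop_on_vector_lattice_of_operators} to see that $\optop{S}\tau_F$ is an \oLtop\ (Hausdorff precisely when $I_S$ separates the points of $\oplat$), pass to the unbounded topology $\unb_\opset\optop{S}\tau_F$, and obtain part~(4) by identifying this topology, via the uniqueness of Hausdorff \uoLtops, with the restriction to the regular vector sublattice $\oplat$ of the Hausdorff \uoLtop\ $\unb_{\regops(E,F)}\optop{E}\tau_F$ on $\regops(E,F)$. The only difference is that where the paper simply cites \cite[Theorem~3.1 and Proposition~4.1]{deng_de_jeu_UNPUBLISHED:2020a} and \cite[Proposition~5.12]{taylor:2019} for the \uoL\ property, the ideal-invariance and Hausdorffness criterion of unbounded topologies, and the restriction step, you supply correct inline arguments for those facts.
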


\begin{proof}
	\!\! It is clear from \cref{res:oLtop_on_vector_lattice_of_operators} and \cite[Proposition~4.1]{deng_de_jeu_UNPUBLISHED:2020a} that $\unb_\opset\optop{S}\tau_F$ is a \uoLtop\ on $\oplat$. The two convergence criteria for nets follow from the combination of those in \cite[Theorem~3.1]{deng_de_jeu_UNPUBLISHED:2020a} and in \cref{res:topology_on_order_bounded_operators_practical}.
	
	According to \cite[Proposition~4.1]{deng_de_jeu_UNPUBLISHED:2020a}, $\unb_\opset\optop{S}\tau_F$ is a Hausdorff topology on $\oplat$ if and only if $\optop{S}\tau_F$ is a Hausdorff topology on $\oplat$ and $I_\opset$ is order dense in $\oplat$. An appeal to \cref{res:oLtop_on_vector_lattice_of_operators} then completes the proof of the necessary and sufficient conditions for $\unb_\opset\optop{S}\tau_F$ to be Hausdorff.
	
	Suppose that $\tau_F$ is actually also Hausdorff, that $I_S$ separates the points of $\oplat$, and that $I_\opset$ is order dense in $\oplat$. From what we have already established, it is clear that $\unb_{\regops(E,F)}\optop{E}\tau_F$ is a \necun\ Hausdorff \uoLtop\ on $\regops(E,F)$. Since the restriction of a Hausdorff \uoLtop\ on a vector lattice to a regular vector sublattice is a \necun\ Hausdorff \uoLtop\ on the vector sublattice (see \cite[Proposition~5.12]{taylor:2019}), the criterion in part~(4) follows from that in part~(3) applied to $\unb_{\regops(E,F)}\optop{E}\tau_F$.
\end{proof}

\begin{remark}
	Take a $\tau_F$-neighbourhood base $\{U_\lambda\}_{\lambda\in\Lambda}$ of zero in $F$ that consists of solid subsets of $F$. For $\lambda\in\Lambda$, $\widetilde T\in I_\opset$, and $x\in I_S$, set
\[
V_{\lambda,\widetilde T,x}\coloneqq \{\, T\in\oplat: (\abs{T}\wedge \abs{\widetilde T})\abs{x}\in U_\lambda\,\}.
\]
As a consequence of the constructions of unbounded and absolute strong operator topologies, $\{\,V_{\lambda,\widetilde{T},x}: \lambda\in\Lambda, T\in I_\opset, x \in I_S \,\}$ is then a $\unb_\opset\optop{S}\tau_F$-neighbourhood base of zero in $\oplat$.
\end{remark}

The following consequence of \cref{res:uoLtops_on_lattices_of_operators_most_precise} will be sufficient in many situations.

\begin{corollary}\label{res:uoLtop_on_regular_sublattices_of_the_order_bounded_operators}
	Let $E$ and $F$ be vector lattices, where $F$ is Dedekind complete. Suppose that $F$ admits a Hausdorff \oLtop\ $\tau_F$.
	
	Take a regular vector sublattice $\oplat$ of $\regops(E,F)$. Then $\oplat$ admits a \necun\ Hausdorff \uoLtop\ $\uoLt_\oplat$. This topology equals $\unb_\oplat\optop{E}\tau_F$, and is also equal to the restriction to $\oplat$ of the Hausdorff \uoLtop\ $\unb_{\regops(E,F)}\optop{E}\tau_F$ on $\regops(E,F)$.

For a net $\opnet$ in $\oplat$, the following are equivalent:
\begin{enumerate}
	\item $T_\alpha\conv{\uoLt_\oplat}0$;
	\item $(\abs{T_\alpha}\wedge \abs{T}){x}\conv{\tau_F} 0$ for all $T\in\oplat$ and $x\in E$;	
	\item $(\abs{T_\alpha}\wedge \abs{T}){x}\conv{\tau_F} 0$ for all $T\in\regops(E,F)$ and $x\in E$.	
\end{enumerate}
\end{corollary}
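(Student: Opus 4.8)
The plan is to deduce everything from the preceding \cref{res:uoLtops_on_lattices_of_operators_most_precise} by specialising the three free parameters $S$ and $\opset$ to their largest natural values. Concretely, I would take $S\coloneqq E$ in $E$ and $\opset\coloneqq\oplat$ in $\oplat$. With these choices, $I_S=I_E=E$, and $I_\opset=I_\oplat=\oplat$, so the ideal generated by $\opset$ is all of $\oplat$, which is trivially order dense in $\oplat$; moreover $I_S=E$ obviously separates the points of $\oplat$ (any nonzero operator in $\oplat$ is nonzero at some point of $E$). Hence hypothesis~(ii) of \cref{res:uoLtops_on_lattices_of_operators_most_precise} is satisfied, and part~(i) of that theorem gives that $\unb_\oplat\optop{E}\tau_F$ is a necessarily unique Hausdorff \uoLtop\ on $\oplat$. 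This already produces the topology $\uoLt_\oplat$ and its identification with $\unb_\oplat\optop{E}\tau_F$.

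Next I would read off the remaining identifications and the equivalences directly from \cref{res:uoLtops_on_lattices_of_operators_most_precise}. The "in that case" clause there states that this Hausdorff \uoLtop\ is the restriction to $\oplat$ of the Hausdorff \uoLtop\ $\unb_{\regops(E,F)}\optop{E}\tau_F$ on $\regops(E,F)$, which is exactly the second identification claimed. For the net criteria: criterion~(3) of the corollary is literally criterion~(4) of \cref{res:uoLtops_on_lattices_of_operators_most_precise} (with $T$ ranging over all of $\regops(E,F)$ and $x$ over all of $E$), and criterion~(2) of the corollary is criterion~(3) of \cref{res:uoLtops_on_lattices_of_operators_most_precise} specialised to $I_\opset=\oplat$ and $I_S=E$. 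Criterion~(1) is the defining convergence for $\uoLt_\oplat=\unb_\oplat\optop{E}\tau_F$. Since \cref{res:uoLtops_on_lattices_of_operators_most_precise} asserts that~(1),~(2),~(3), and~(4) there are all equivalent under hypothesis~(ii), transporting the labels yields the stated equivalence of~(1),~(2), and~(3) here.

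I do not anticipate a genuine obstacle: the corollary is a clean specialisation, and the only thing that really needs checking is that the two density/separation conditions in hypothesis~(ii) of \cref{res:uoLtops_on_lattices_of_operators_most_precise} hold automatically for the maximal choices $S=E$, $\opset=\oplat$ — and both are immediate. The mildest point worth a sentence in the write-up is the remark that a regular vector sublattice $\oplat$ of $\regops(E,F)$ is in particular a vector lattice that is Dedekind complete enough for the earlier machinery to apply, but this is already built into the hypotheses of \cref{res:uoLtops_on_lattices_of_operators_most_precise}, so no extra argument is needed. The proof is therefore essentially one line: "Apply \cref{res:uoLtops_on_lattices_of_operators_most_precise} with $S=E$ and $\opset=\oplat$."
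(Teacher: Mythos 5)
Your proposal is correct and matches the paper exactly: the corollary is stated there as a direct consequence of \cref{res:uoLtops_on_lattices_of_operators_most_precise}, obtained precisely by taking $S=E$ and $\opset=\oplat$, for which the separation and order density conditions in part~(ii) hold trivially. The identification of the convergence criteria and the restriction statement then read off from the theorem just as you describe.
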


\begin{remark}
	There can, sometimes, be other ways to see that a given regular vector sublattice of $\regops(E,F)$ admits a Hausdorff \uoLtop. For example, suppose that $\ocdual{F}$ separates the points of $F$. For $x\in E$ and $\varphi\in\ocdual{F}$, the map $T\mapsto\varphi(Tx)$ defines an order continuous linear functional on $\ocops(E,F)$, and it is then clear that the order continuous dual of $\ocops(E,F)$ separates the points of $\ocops(E,F)$. Hence $\ocops(E,F)$ can also be supplied with a Hausdorff \uoLtop\ as in \cite[Theorem~5.2]{deng_de_jeu_UNPUBLISHED:2020a} which, in view of its uniqueness, coincides with the one as supplied by \cref{res:uoLtop_on_regular_sublattices_of_the_order_bounded_operators}.
\end{remark}

\section{Comparing uniform and strong convergence structures on $\regops(E,F)$}\label{sec:uniform_and_strong_convergence_structures_on_regops}

\noindent
Suppose that $E$ and $F$ are vector lattices, where $F$ is Dedekind complete. As explained in \cref{sec:introduction_and_overview}, there exist a uniform and a strong convergence structure on $\regops(E,F)$ for each of order convergence, unbounded order convergence, and\textemdash when applicable\textemdash convergence in the Hausdorff \uoLtop. In this section, we investigate what the inclusion relations are between the members of each of these three pairs. For example, is it true that the uniform (resp.\ strong) order convergence of a net of order bounded operators implies its strong (resp.\ uniform) order convergence to the same limit? We shall show that only one of the six conceivable implications is valid in general, and that the others are not even generally valid for uniformly bounded sequences of order continuous operators on Banach lattices.
Whilst the failures of such general implications may, perhaps, not come as too big a surprise, the positive results for orthomorphisms (see \cref{res:ordertoorderinorth,res:oRKF,res:uoRKF,res:uoLt_RKF}, below) may serve to indicate that they are less evident than one would think at first sight.

For monotone nets in $\regops(E,F)$, however, the following result shows that then even all four (or six) notions of convergence in $\regops(E,F)$ coincide.

\begin{proposition}\label{res:regops_monotone_nets}
Let $E$ and $F$ be vector lattices, where $F$ is Dedekind complete, and let $\opnet$ be a monotone net in $\regops(E,F)$. The following are equivalent:
\begin{enumerate}
	\item $T_\alpha\oconv 0$ in $\regops(E,F)$;
	\item $T_\alpha\uoconv 0$ in $\regops(E,F)$;
	\item $T_\alpha x\oconv 0$ in $F$ for all $x\in E$;
	\item $T_\alpha x \uoconv 0$ in $F$ for all $x\in E$.
	\end{enumerate}
Suppose that, in addition, $F$ admits a \necun\ Hausdorff \uoLtop\ $\uoLt_F$, so that $\regops(E,F)$ also admits a \necun\ Hausdorff \uoLtop\ $\uoLt_{\regops(E,F)}$ by \cref{res:uoLtop_on_regular_sublattices_of_the_order_bounded_operators}. Then \uppars{1}--\uppars{4} are also equivalent to:
\begin{enumerate}
	\setcounter{enumi}{4}
	\item $T_\alpha\conv{\uoLt_{\regops(E,F)}} 0$;
	\item $T_\alpha x\conv{\uoLt_F} 0$ for all $x\in E$.
\end{enumerate}
\end{proposition}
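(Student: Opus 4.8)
The strategy is to exploit the fact that for a monotone net, order convergence and unbounded order convergence to $0$ coincide, and that this can be tested against the single "top" (or bottom) element of the net. First I would reduce to the case of a decreasing net $T_\alpha\downarrow 0$: since $\opnet$ is monotone and each of conditions (1)--(6) in particular forces the net to have an order/uo-limit which must be $0$, replacing $T_\alpha$ by $|T_\alpha|$ (when the net is increasing to its supremum, pass to $T_{\alpha_0}-T_\alpha$ for a fixed index, or to $|T_\alpha - T|$), one may assume $T_\alpha\geq 0$ and $T_\alpha\downarrow$. The key observation is that a \emph{decreasing} net in a vector lattice is order convergent to $0$ iff it is unbounded order convergent to $0$: indeed $T_\alpha\downarrow$ and $T_\alpha\uoconv 0$ gives $T_\alpha = T_\alpha\wedge T_{\alpha_0}\oconv 0$ for any fixed $\alpha_0$. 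The same remark applies in $F$ to the net $\netgen{T_\alpha x}{\alpha\in\is}$ for $x\in\posE$, which is decreasing.

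Next I would establish the equivalences among (1)--(4). The implications (1)$\Rightarrow$(2) and (3)$\Rightarrow$(4) are general (stated in \cref{sec:preliminaries}), and (1)$\Rightarrow$(3) is exactly \cref{res:o_to_o} applied with $\oplat=\regops(E,F)$ (a regular, indeed full, sublattice of itself). For (4)$\Rightarrow$(1): assuming $T_\alpha\geq 0$, $T_\alpha\downarrow$, the hypothesis $T_\alpha x\uoconv 0$ for all $x$ combined with the monotonicity remark above gives $T_\alpha x\downarrow 0$ in $F$ for all $x\in\posE$; since $F$ is Dedekind complete this means $\inf_\alpha T_\alpha x = 0$ for all $x\in\posE$, and a standard argument (e.g.\ via \cite[Theorem~1.18]{aliprantis_burkinshaw_POSITIVE_OPERATORS_SPRINGER_REPRINT:2006}, or directly: $T:=\inf_\alpha T_\alpha$ exists in $\regops(E,F)$ by Dedekind completeness of $\regops(E,F)$ and satisfies $Tx=0$ for all $x\in\posE$, hence $T=0$) yields $T_\alpha\downarrow 0$ in $\regops(E,F)$, i.e.\ (1). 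This closes the cycle, so (1)$\Leftrightarrow$(2)$\Leftrightarrow$(3)$\Leftrightarrow$(4).

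For the second part, assume $F$ admits a Hausdorff \uoLtop\ $\uoLt_F$; then $\regops(E,F)$ admits $\uoLt_{\regops(E,F)}$ by \cref{res:uoLtop_on_regular_sublattices_of_the_order_bounded_operators}, and by that same corollary convergence in $\uoLt_{\regops(E,F)}$ is tested by $(\abs{T_\alpha}\wedge\abs{T})x\conv{\tau_F}0$ for all $T\in\regops(E,F)$, $x\in E$, where $\tau_F$ is the Hausdorff \oLtop\ on $F$ (which exists since $\uoLt_F$ is one). The implication (2)$\Rightarrow$(5) is the defining property of a \uoLtop. For (6)$\Rightarrow$(4): reduce again to $T_\alpha\geq 0$, $T_\alpha\downarrow$; then $T_\alpha x\downarrow$ in $F$ and $T_\alpha x\conv{\uoLt_F}0$, and a monotone net that converges to $0$ in a Hausdorff \uoLtop\ is uo-convergent (hence, being decreasing, order convergent) to $0$ — this is the standard fact that a Hausdorff \oLtop\ is order-Riesz, i.e.\ a $\tau$-convergent decreasing net is order convergent to its limit; reference \cite[Theorem~5.9]{taylor:2019} or the uniqueness results cited for $\uoLt_F$. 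This gives $T_\alpha x\oconv 0$, i.e.\ (3)/(4). It remains to connect (5) into the cycle: (1)$\Rightarrow$(5) follows since $\uoLt_{\regops(E,F)}$ is a \uoLtop\ and (1)$\Rightarrow$(2); and (5)$\Rightarrow$(6) follows from \cref{res:uoLtop_on_regular_sublattices_of_the_order_bounded_operators}\,(1)$\Leftrightarrow$(2) together with part~\partref{part:topology_on_operators_practial_1} of \cref{res:uoLtop_on_regular_sublattices_of_the_order_bounded_operators} (continuity of $T\mapsto Tx$ from $(\regops(E,F),\uoLt_{\regops(E,F)})$ to $(F,\uoLt_F)$, noting $\uoLt_{\regops(E,F)}=\unb_{\regops(E,F)}\optop{E}\tau_F$). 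Hence all six are equivalent.

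The main obstacle I anticipate is the clean reduction to the monotone-positive case while keeping all six conditions in sync — in particular making sure that each of (1)--(6) individually forces the limit to be $0$ (so that the reduction $T_\alpha\rightsquigarrow|T_\alpha-T|$ or $T_{\alpha_0}-T_\alpha$ is legitimate) before invoking the monotonicity tricks; once that bookkeeping is done, every individual implication is either a cited general fact or the elementary observation that a decreasing net uo-converges to $0$ iff it order-converges to $0$, and likewise for convergence in a Hausdorff \uoLtop.
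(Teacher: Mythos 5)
Most of your argument is sound and runs along essentially the same lines as the paper's: reduce to a positive decreasing net, observe that for such nets order and unbounded order convergence to zero coincide (both in $\regops(E,F)$ and, applied to $\netgen{T_\alpha x}{\alpha\in\is}$ for $x\in\posE$, in $F$), and use the classical equivalence of $T_\alpha\downarrow 0$ in $\regops(E,F)$ with $T_\alpha x\downarrow 0$ for all $x\in\posE$ to link (1) and (3). Your handling of (2)$\Rightarrow$(5), of (1)$\Rightarrow$(5), and of (6)$\Rightarrow$(4) (a decreasing net that converges in a Hausdorff locally solid topology order-converges to the same limit) is also fine.

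There is, however, a genuine gap at the step (5)$\Rightarrow$(6). You justify it by asserting that $T\mapsto Tx$ is continuous from $(\regops(E,F),\uoLt_{\regops(E,F)})$ to $(F,\uoLt_F)$, deduced from the $\optop{E}\tau_F\text{\textendash}\tau_F$ continuity of that map together with the identification $\uoLt_{\regops(E,F)}=\unb_{\regops(E,F)}\optop{E}\tau_F$. Passing to the unbounded topologies on both sides does not preserve this continuity, and the claimed continuity is in fact false: \cref{exam:counterexample_3} of this very paper exhibits positive operators $T_n$ (powers of the shift on $\ell_\infty(\ZZ)$) with $T_n\conv{\uoLt_{\regops(E)}}0$ but $T_ne\xslashedrightarrowc{\uoLt_E}0$. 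Concretely, the criterion for (5) is $(\abs{T_\alpha}\wedge\abs{T})x\conv{\tau_F}0$ for all $T$ and $x$, while (6) requires $\abs{T_\alpha x}\wedge y\conv{\tau_F}0$ for all $x\in E$ and $y\in\posF$, and the former does not dominate the latter for general nets. The monotonicity of the net must enter at this point. The cheapest repair is to replace (5)$\Rightarrow$(6) by (5)$\Rightarrow$(1): apply to the decreasing net $\opnet$ in $\regops(E,F)$ the same fact you already invoke for (6)$\Rightarrow$(4), namely that a decreasing net converging in a Hausdorff locally solid topology order-converges to its limit; then (1)$\Rightarrow$(3)$\Rightarrow$(6) closes the loop. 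This is in effect what the paper does, quoting a lemma from its companion paper to the effect that for monotone nets uo-convergence and convergence in the Hausdorff \uoLtop\ coincide, which yields (2)$\Leftrightarrow$(5) and (4)$\Leftrightarrow$(6) directly.
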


\begin{proof}
We may suppose that $T_\alpha\downarrow$ and that $x\in\pos{E}$. For order bounded nets in a vector lattice, order convergence and unbounded order convergence are equivalent. Passing to an order bounded tail of $\opnet$, we thus see that the parts~(1) and~(2) are equivalent. Similarly, the parts~(3) and~(4) are equivalent. The equivalence of the parts~(1) and~(3) is well known; see \cite[Theorem~1.67]{aliprantis_burkinshaw_LOCALLY_SOLID_RIESZ_SPACES_WITH_APPLICATIONS_TO_ECONOMICS_SECOND_EDITION:2003}, for example.

Suppose that $F$ admits a Hausdorff \uoLtop\ $\uoLt_F$. In that case, it follows from \cite[Lemma~7.2]{deng_de_jeu_UNPUBLISHED:2020a} that the parts~(2) and~(5) are equivalent, as are the parts~(4) and~(6).
\end{proof}

When $\opnet$ is a not necessarily monotone net in $\regops(E,F)$ such that $T_\alpha\oconv 0$, then \cref{res:o_to_o} shows that $T_\alpha x\oconv 0$ in $F$ for all $x\in E$. We shall now give five examples to show that each of the remaining five conceivable implications between a corresponding uniform and strong convergence structures on $\obops(E,F)$ is not generally valid. In each of these examples, we can even take $E=F$ to be a Banach lattice, and for the net $\opnet$ we can even take a uniformly bounded sequence $\seq{T_n}{n}$ of order continuous operators on $E$.

\begin{example}\label{exam:counterexample_1}
	We give an example of a \emph{uniformly bounded sequence $\seq{T_n}{n}$ of positive order continuous operators on a Dedekind complete Banach lattice $E$ with a strong order unit, such that $T_n x\oconv 0$ in $E$ for all $x\in E$ but $T_n\xslashedrightarrowc{\o} 0$ in $\regops(E)$} because the sequence is not even order bounded in $\obops(E)$.
	
	We choose $\ell_{\infty}(\NN)$ for $E=F$. For $n\geq 1$, we set $T_n\coloneqq S^n$, where $S$ is the right shift operator on $E$. The $T_n$ are evidently positive and of norm one. A moment's thought shows that they are order continuous. Furthermore, it is easy to see that $T_n x\oconv 0$ in $E$ for all $x\in E$. We shall now show that $\{\,T_n: n\geq 1\,\}$ is not order bounded in $\regops(E)$. For this, we start by establishing that the $T_n$ are mutually disjoint. Let $\seq{e_i}{i}$ be the standard sequence of unit vectors in $E$. Take $m\neq n$ and $i\geq 1$. Since $e_i$ is an atom, the Riesz--Kantorovich formula for the infimum of two operators shows that
	\[
	0\leq (T_m\wedge T_n)e_i=\inf\{\,te_{m+i}+(1-t)e_{n+i}: 0\leq t\leq 1\,\}\leq\inf\{e_{m+i},e_{n+i}\}=0.
	\]
	Hence $(T_m\wedge T_n)$ vanishes on the span of the $e_i$. Since this span is order dense in $E$, and since $T_n\wedge T_m\in \ocops(E)$, it follows that $T_n\wedge T_m=0$.
	
	We can now show that $\seq{T_n}{n}$ is not order bounded in $\regops(E)$. Indeed, suppose that $T\in\regops(E)$ is a upper bound for all $T_n$. Set $e\coloneqq\bigvee_{i=1}^{\infty}e_i$. Then, for all $N\geq 1$,
	\[
	Te\geq \left(\bigvee_{n=1}^{N}T_n\right)e=\left(\sum_{n=1}^{N}T_n\right)e\geq Ne_{N+1}.
	\]
This shows that $Te$ cannot be an element of $\ell_\infty$. We conclude from this contradiction that $\seq{T_n}{n}$ is not order bounded in $\regops(E)$.
\end{example}

\begin{example}\label{exam:counterexample_2}
We give an example of a \emph{uniformly bounded sequence $\seq{T_n}{n}$ of positive order continuous operators on a Dedekind complete Banach lattice $E$ with a strong order unit, such that $T_n\uoconv 0$ in $\regops(E)$ but $T_n x\xslashedrightarrowc{\uo} 0$ for some $x\in E$}.

We choose $\ell_\infty(\ZZ)$ for $E=F$. For $n\geq 1$, we set $T_n\coloneqq S^n$, where $S$ is the right shift operator on $E$. Just as in \cref{exam:counterexample_1}, the $T_n$ are  positive order continuous operators on $E$ of norm one that are mutually disjoint. Since disjoint sequences in vector lattices are unbounded order convergent to zero (see \cite[Corollary~3.6]{gao_troitsky_xanthos:2017}), we have $T_n\uoconv 0$ in $\regops(E)$. On the other hand, if we let $e$ be the two-sided sequence that is constant 1, then $T_ne=e$ for all $n\geq 1$. Hence $\seq{T_ne}{n}$ is not unbounded order convergent to zero in $E$.
	\end{example}

For our next example, we require a preparatory lemma.

\begin{lemma}\label{res:disjointness_conditional_expectation_and_identity}
	Let $\mu$ be the Lebesgue measure on the Borel $\sigma$-algebra $\mathcal B$ of $[0,1]$, and let $1\leq p\leq\infty$. Take a Borel subset $S$ of $[0,1]$, and define the positive operator  $T_S:\Ell_p([0,1],\mathcal B,\mu)\to\Ell_p([0,1],\mathcal B,\mu)$ by setting
	\[
	T_S(f)\coloneqq \int_S\!\! f\,\mathrm{d}\mu\cdot \chi_S
	\]
	for $f\in \Ell_p([0,1],\mathcal B,\mu)$. Then $T_S\wedge I=0$.
\end{lemma}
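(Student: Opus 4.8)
The plan is a direct application of the Riesz--Kantorovich formula; write $\Ell_p$ for $\Ell_p([0,1],\mathcal B,\mu)$. Since $\Ell_p$ is Dedekind complete, so is $\regops(\Ell_p)$, whence $T_S\wedge\Id$ exists, is positive, and vanishes if and only if $(T_S\wedge\Id)f=0$ for every $f\in\Ell_p$ with $f\geq 0$. Fix such an $f$. The Riesz--Kantorovich formula gives
\[
(T_S\wedge\Id)f=\inf\bigl\{\,T_S g+h : g,h\in\Ell_p,\ g,h\geq 0,\ g+h=f\,\bigr\},
\]
so it suffices to exhibit, for each $\varepsilon>0$, finitely many admissible decompositions $f=g_i+h_i$ such that the pointwise infimum of the corresponding elements $T_S g_i+h_i$ of $\Ell_p$ is at most $\varepsilon\one$.

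The point is that $T_S$ compresses all of $f|_S$ into the single scalar $\int_S f\,\dc\mu$, so by slicing $S$ finely that scalar can be made uniformly small. Since $\mu$ is finite, $f\in\Ell_1([0,1],\mathcal B,\mu)$, so by absolute continuity of the integral there is a $\delta>0$ with $\int_B f\,\dc\mu<\varepsilon$ whenever $\mu(B)<\delta$. Choose $N$ with $1/N<\delta$, and for $i=1,\dots,N$ set $B_i\coloneqq S\cap[(i-1)/N,\,i/N)$, so that $\mu(B_i)\leq 1/N<\delta$ and $S=B_1\cup\dots\cup B_N$ is a partition. For each $i$, put $g_i\coloneqq f\cdot\chi_{B_i\cup([0,1]\setminus S)}$ and $h_i\coloneqq f-g_i=f\cdot\chi_{S\setminus B_i}$; these lie in $\Ell_p$ and are positive since $\Ell_p$ is an ideal. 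Writing $A_i\coloneqq\int_{B_i}f\,\dc\mu<\varepsilon$, one computes $T_S g_i=A_i\chi_S$, so that $T_S g_i+h_i=A_i\chi_S+f\chi_{S\setminus B_i}$.

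By the displayed formula, $(T_S\wedge\Id)f\leq A_i\chi_S+f\chi_{S\setminus B_i}$ for every $i$, hence $(T_S\wedge\Id)f$ lies below the finite infimum $\bigwedge_{i=1}^{N}\bigl(A_i\chi_S+f\chi_{S\setminus B_i}\bigr)$. Evaluating this infimum pointwise: at a point of $B_j$ the $i=j$ term equals $A_j$, and at a point of $[0,1]\setminus S$ every term vanishes, so the infimum is at most $\sum_{j=1}^{N}A_j\chi_{B_j}\leq\varepsilon\chi_S\leq\varepsilon\one$. Thus $0\leq(T_S\wedge\Id)f\leq\varepsilon\one$ for every $\varepsilon>0$, which forces $(T_S\wedge\Id)f=0$; as $f\geq 0$ was arbitrary, $T_S\wedge\Id=0$.

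There is no genuinely hard step here; the points needing a little care are the pointwise bookkeeping for $\bigwedge_i\bigl(A_i\chi_S+f\chi_{S\setminus B_i}\bigr)$, which must be checked separately over each $B_j$ and over $[0,1]\setminus S$, and the use of absolute continuity of the integral to make all the $A_i$ small simultaneously. I would also note that the argument is uniform in $p\in[1,\infty]$, which is worth remarking since for $p=\infty$ the functional $f\mapsto\int_S f\,\dc\mu$ fails to be order continuous, so softer arguments relying on order continuity of $T_S$ would need extra care. (An alternative is to observe that any $R$ with $0\leq R\leq\Id$ is band preserving, hence a multiplication operator since $\one$ is a weak order unit, and then read $R\leq T_S$ off as a pointwise estimate; but the Riesz--Kantorovich route above is self-contained.)
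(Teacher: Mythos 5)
Your proof is correct, but it takes a genuinely different route from the paper's. The paper tests only on the constant function $\one$: it partitions $[0,1]$ into $n$ equal-measure pieces $A_i$, uses the elementary bound $(T_S\wedge \Id)\chi_{A_i}\leq (T_S\chi_{A_i})\wedge\chi_{A_i}\leq\mu(A_i)\chi_{A_i}$ to get $(T_S\wedge \Id)\one\leq\tfrac1n\one$, and then upgrades from the weak order unit $\one$ to all of $\Ell_p$ by observing that $0\leq T_S\wedge \Id\leq \Id$ forces $T_S\wedge \Id$ to be order continuous. You instead evaluate $(T_S\wedge\Id)f$ for every positive $f$ directly, slicing $S$ (rather than $[0,1]$) and invoking absolute continuity of $\int f\,\dc\mu$ to make the scalars $A_i$ small; this needs only the trivial inequality $(T_S\wedge\Id)(g_i+h_i)\leq T_Sg_i+\Id h_i$, not the full Riesz--Kantorovich formula, and it bypasses the order-continuity step entirely. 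That is a reasonable trade: your argument is more self-contained, at the cost of slightly heavier bookkeeping. One correction to your closing remark: the functional $f\mapsto\int_S f\,\dc\mu$ is in fact order continuous on $\Ell_\infty([0,1],\mathcal B,\mu)$ (this space has the countable sup property, so a decreasing net with infimum $0$ admits a decreasing sequence with a.e.\ pointwise infimum $0$, and dominated convergence applies), so the claimed failure for $p=\infty$ does not occur; moreover the paper's soft step uses order continuity of $T_S\wedge\Id$, deduced from $0\leq T_S\wedge\Id\leq\Id$, not of $T_S$ itself, so it is equally uniform in $p$. This does not affect the validity of your proof.
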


\begin{proof}
	
	Take an $n\geq 1$, and choose disjoint a partition $[0,1]=\bigcup_{i=1}^n A_i$ of $[0,1]$ into Borel sets $A_i$ of measure $1/n$. Let $e$ denote the constant function 1. Then
	\begin{align*}
	(T_S\wedge I)e&=\sum_{i=1}^n (T_S\wedge I)\chi_{A_i}\\
	&\leq\sum_{i=1}^n (T_S\chi_{A_i})\wedge \chi_{A_i}\\
	&\leq\sum_{i=1}^n(\mu(A_i)\chi_S)\wedge\chi_{A_i}\\
	&\leq\sum_{i=1}^n \mu(A_i)\chi_{A_i}\\
	&=\frac{1}{n} e.
	\end{align*}
	Since $n$ is arbitrary, we see that $(T_S\wedge I)e=0$. Because $0\leq T_S\wedge \Id\leq\Id$, $T_S\wedge \Id$ is order continuous. From the fact that the positive order continuous operator $T_S\wedge I$ vanishes on the weak order unit $e$ of $\Ell_p([0,1],\mathcal B,\mu)$, we conclude that $T_S\wedge I=0$.
\end{proof}

\begin{example}\label{exam:counterexample_4}
	We give an example of a \emph{uniformly bounded sequence $\seq{T_n}{n}$ of order continuous operators on a separable reflexive Banach lattice $E$ with a weak order unit, such that $T_nx\uoconv 0$ in $E$ for all $x\in E$ but $T_n\xslashedrightarrowc{\uo}0$ in $\regops(E)$} because even $T_n\xslashedrightarrowc{\uoLt_{\regops(E)}}0$ in $\regops(E)$.
	
	Let $\mu$ be the Lebesgue measure on the Borel $\sigma$-algebra $\mathcal B$ of $[0,1]$, and let $1\leq p\leq \infty$. For $E$ we choose $\Ell_p([0,1],\mathcal B,\mu)$, so that $E$ is reflexive for $1<p<\infty$. For $n\geq 1$, we let $\mathcal B_n$ be the sub-$\sigma$-algebra of $\mathcal B$ that is generated by the intervals $S_{n,i}\coloneqq [(i-1)/2^n,i/2^n]$ for $i=1,\dotsc,2^n$, and we let $\EE_n:E\to E$ be the corresponding conditional expectation. By \cite[Theorem~10.1.5]{bogachev_MEASURE_THEORY_VOLUME_II:2007}, $\EE_n$ is a positive norm one projection. A moment's thought shows that every open subset of $[0,1]$ is the union of the countably infinitely many $S_{n,i}$ that are contained in it, so that it follows from \cite[Theorem~10.2.3]{bogachev_MEASURE_THEORY_VOLUME_II:2007} that $\EE_n f\to f$ almost everywhere as $n\to\infty$. By \cite[Proposition~3.1]{gao_troitsky_xanthos:2017}, we can now conclude that $\EE_n f\uoconv f$ for all $f\in E$.
	
	On the other hand, it is not true that $\EE_n\conv{\uoLt_{\regops(E)}} I$. To see this, we note that, by \cite[Example~10.1.2]{bogachev_MEASURE_THEORY_VOLUME_II:2007}, every $\EE_n$ is a linear combination of operators as in \cref{res:disjointness_conditional_expectation_and_identity}. Hence $\EE_n\perp I$ for all $n$. Since $\uoLt_{\regops(E)}$ is a locally solid linear topology, a possible $\uoLt_{\regops(E)}$-limit of the $\EE_n$ is also disjoint from $I$, hence cannot be $I$ itself.
	
	On setting $T_n\coloneqq \EE_n-I$ for $n\geq 1$, we have obtained a sequence of operators as desired.
	\end{example}

\begin{example}\label{exam:counterexample_3}
	We give an example of a \emph{uniformly bounded sequence $\seq{T_n}{n}$ of positive order continuous operators on a Dedekind complete Banach lattice $E$ with a strong order unit that admits a Hausdorff \uoLtop, such that $T_n\conv{\uoLt_{\regops(E)}} 0$ in $\regops(E)$ but $T_n x\xslashedrightarrowc{\uoLt_E} 0$ in $E$ for some $x\in E$}.
	
	We choose $E$,  the $T_n\in\regops(E)$, and $e\in E$ as in \cref{exam:counterexample_2}. There are several ways to see that $E$ admits a Hausdorff \uoLtop. This follows most easily from the fact that $E$ is atomic (see \cite[Lemma~7.4]{taylor:2019}) and also from  \cite[Theorem~6.3]{deng_de_jeu_UNPUBLISHED:2020a} in the context of measure spaces. By \cref{res:uoLtop_on_regular_sublattices_of_the_order_bounded_operators}, $\regops(E)$ then also admits such a topology. Since we already know from \cref{exam:counterexample_2}  that $T_n\uoconv 0$, we also have that $T_n\conv{\uoLt_{\regops(E)}}0$. On the other hand, the fact that $T_ne=e$ for $n\geq 1$ evidently shows that $\seq{T_ne}{n}$ is not $\uoLt_E$-convergent to zero in $E$.
\end{example}

\begin{example}\label{exam:counterexample_5}
	We note that \cref{exam:counterexample_4} also gives an example of a \emph{uniformly bounded sequence $\seq{T_n}{n}$ of order continuous operators on a separable reflexive Banach lattice $E$ with a weak order unit that admits a Hausdorff \uoLtop, such that $T_nx\conv{\uoLt_E} 0$ in $E$ for all $x\in E$ but $T_n\xslashedrightarrowc{\uoLt_{\regops(E)}} 0$ in $\regops(E)$.}	
	\end{example}

\section{Orthomorphisms}\label{sec:orthomorphisms}

\noindent In this section, we review some material concerning orthomorphism and establish a few auxiliary result for use in the present paper and in future ones.

Let $E$ be a vector lattice. We recall from \cite[Definition~2.41]{aliprantis_burkinshaw_POSITIVE_OPERATORS_SPRINGER_REPRINT:2006} that an operator on $E$ is called an \emph{orthomorphism} when it is a band preserving order bounded operator. An orthomorphism is evidently disjointness preserving, it is order continuous (see \cite[Theorem~2.44]{aliprantis_burkinshaw_POSITIVE_OPERATORS_SPRINGER_REPRINT:2006}), and its kernel is a band (see \cite[Theorem~2.48]{aliprantis_burkinshaw_POSITIVE_OPERATORS_SPRINGER_REPRINT:2006}). We denote by $\Orth(E)$ the collection of all orthomorphism on $E$.  Even when $E$ is not Dedekind complete, the supremum and infimum of two orthomorphisms $S$ and $T$ in $E$ always exists in $\obops(E)$. In fact, we have
\begin{equation}
\begin{split}\label{eq:sup_of_two_orthormorphisms}
\left[S\vee T\right](x)&=S(x)\vee T(x)\\
\left[S\wedge T\right](x)&=S(x)\wedge T(x)
\end{split}
\end{equation}
for $x\in\posE$ and
\begin{equation}\label{eq:modulus_of_orthormorphism}
\abs{Tx}=\abs{T}\abs{x}=\abs{T(\abs{x})}
\end{equation}
for $x\in E$; see \cite[Theorems~2.43 and~2.40]{aliprantis_burkinshaw_POSITIVE_OPERATORS_SPRINGER_REPRINT:2006}. Consequently, $\Orth(E)$ is a unital vector lattice algebra for every vector lattice $E$. Even more is true: according to \cite[Theorem~2.59]{aliprantis_burkinshaw_POSITIVE_OPERATORS_SPRINGER_REPRINT:2006}, $\Orth(E)$ is an (obviously Archimedean) $f$-algebra for every vector lattice $E$, so it is commutative by  \cite[Theorem~2.56]{aliprantis_burkinshaw_POSITIVE_OPERATORS_SPRINGER_REPRINT:2006}.  Furthermore, for every vector lattice $E$, when $T\in\Orth(E)$ and $T:E\to E$ is injective and surjective, then the linear map $T^{-1}:E\to E$ is again an orthomorphism. We refer to \cite[Theorem~3.1.10]{meyer-nieberg_BANACH_LATTICES:1991} for a proof of this result of Huijsmans' and de Pagter's.

It follows easily from \cref{eq:sup_of_two_orthormorphisms} that, for every vector lattice $E$, the identity operator is a weak order unit of $\Orth(E)$. When $E$ is Dedekind complete, $\Orth(E)$ is the band in $\regops(E)$ that is generated by the identity operator on $E$;  see \cite[Theorem 2.45]{aliprantis_burkinshaw_POSITIVE_OPERATORS_SPRINGER_REPRINT:2006}.

Let $E$ be a vector lattice, let $T\in\obops(E)$, and let $\lambda\geq 0$. Using \cite[Theorem~2.40]{aliprantis_burkinshaw_POSITIVE_OPERATORS_SPRINGER_REPRINT:2006}, it is not difficult to see that the following are equivalent:
\begin{enumerate}
	\item $-\lambda\Id \leq T\leq \lambda\Id$;
	\item $\abs{T}$ exists in $\obops(E)$, and $\abs{T}\leq\lambda I$;
	\item $\abs{Tx}\leq\lambda\abs{x}$ for all $x\in E$.
	\end{enumerate}
The set of all such $T$ is a unital subalgebra $\mathcal Z(E)$ of $\Orth(E)$ consisting of ideal preserving order bounded operators on $E$. It is called the \emph{ideal centre of $E$}.

Let $E$ be a vector lattice, and define the \emph{stabiliser of $E$}, denoted by $\mathcal S(E)$, as the set of linear operators on $E$ that are ideal preserving. It is not required that these operators be order bounded, but this is nevertheless always the case. In fact, $\mathcal S(E)$ is a unital subalgebra of $\Orth(E)$ for every vector lattice $E$ (see \cite[Proposition~2.6]{wickstead:1977a}), so that we have the chain
\[
\mathcal Z(E)\subseteq\mathcal S(E)\subseteq\Orth(E)
\]
of unital algebras for every vector lattice $E$. For every Banach lattice $E$, we have
\[
\mathcal Z(E)=\mathcal S(E)=\Orth(E);
\]
see \cite[Corollary~4.2]{wickstead:1977a}, so that the identity operator on $E$ is then even an order unit of $\Orth(E)$.

For every Banach lattice $E$, $\Orth(E)$ is a unital Banach subalgebra of the bounded linear operators on $E$ in the operator norm. This follows easily from the facts that bands are closed and that a band preserving operator on a Banach lattice is automatically order bounded; see \cite[Theorem~4.76]{aliprantis_burkinshaw_POSITIVE_OPERATORS_SPRINGER_REPRINT:2006}.

Let $E$ be a Banach lattice. Since the identity operator is an order unit of $\Orth(E)$, we can introduce the order unit norm $\norm{\,\cdot\,}_\Id$ with respect to $\Id$ on $\Orth(E)$ by setting
\[
\norm{T}_\Id\coloneqq \inf\{\,\lambda\geq 0: \abs{T}\leq \lambda\Id\,\}
\]
for $T\in\Orth(E)$. Then $\norm{T}=\norm{T}_\Id$ for all $T\in\Orth(E)$; see \cite[Proposition~4.1]{wickstead:1977a}. Since we already know that $\Orth(E)$ is complete in the operator norm, it follows that $\Orth(E)$, when supplied with $\norm{\,\cdot\,}=\norm{\,\cdot\,}_\Id$, is a unital Banach lattice algebra that is also an AM-space. When $E$ is a Dedekind complete Banach lattice, then evidently $\norm{T}=\norm{T}_I=\norm{\abs{T}}_I=\norm{\,\abs{T}\,}=\norm{T}_\reg$ for $T\in\Orth(E)$. Hence $\Orth(E)$ is then  also a unital Banach lattice subalgebra of the Banach lattice algebra of all order bounded operators on $E$ in the regular norm.

Let $E$ be Banach lattice. It is clear from the above that  $(\Orth(E),\norm{\,\cdot\,})=(\Orth(E),\norm{\,\cdot\,}_I)$ is a unital Banach $f$-algebra in which its identity element is also a (positive) order unit. The following result is, therefore, applicable with $\alg=\Orth(E)$ and $e=\Id$. It shows, in particular, that $\Orth(E)$ is isometrically Banach lattice algebra isomorphic to a $C(K)$-space. Both its statement and its proof improve on the ones in \cite[Proposition~2.6]{de_jeu_wortel:2014}, \cite[Proposition~1.4]{schaefer_wolff_arendt:1978}, and \cite{hackenbroch:1977}.

\begin{theorem}\label{res:f-algebra_is_C(K)-space}
Let $\alg$ be a unital $f$-algebra such that its identity element $e$ is also a \uppars{positive} order unit, and such that it is complete in the submultiplicative order unit norm $\norm{\,\cdot\,}_e$ on $\alg$. Let $\algtwo$ be a \uppars{not necessarily unital} associative subalgebra of $\alg$. Then $\overline{\algtwo}^{\norm{\,\cdot\,}_e}$ is a Banach $f$-subalgebra of $\alg$. When $e\in\overline{\algtwo}^{\norm{\,\cdot\,}_e}$, then there exist a compact Hausdorff space $K$, uniquely determined up to homeomorphism, and an isometric surjective Banach lattice algebra isomorphism $\psi: \overline{\algtwo}^{\norm{\,\cdot\,}_e}\to C(K)$.
\end{theorem}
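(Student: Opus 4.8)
The plan is to reduce the whole statement to a concrete situation inside a $C(K_0)$-space, after which the first assertion becomes the remark that a closed subalgebra of such a space is automatically a sublattice, and the second becomes a Stone--Weierstrass argument. First I would represent $\alg$ itself. Since its identity $e$ is a positive order unit, $(\alg,\norm{\,\cdot\,}_e)$ is a complete AM-space with unit, so the Kakutani--Bohnenblust--Krein representation theorem \cite{aliprantis_burkinshaw_POSITIVE_OPERATORS_SPRINGER_REPRINT:2006} supplies a compact Hausdorff space $K_0$, unique up to homeomorphism, together with a surjective lattice isometry $\psi_0\colon\alg\to C(K_0)$ with $\psi_0(e)=\one$.

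The key point is that $\psi_0$ is automatically an isomorphism of algebras when $C(K_0)$ carries its pointwise product. To see this, fix $k\in K_0$ and set $\phi\coloneqq\delta_k\circ\psi_0\colon\alg\to\RR$, where $\delta_k$ denotes evaluation at $k$; then $\phi$ is a lattice homomorphism with $\phi(e)=1$. For $a\in\alg^{+}$ with $\phi(a)=0$ and any $b$ with $0\le b\le\lambda e$, one has $0\le ab\le\lambda a$ (multiply the inequality $b\le\lambda e$ by $a\ge 0$, using that the positive cone of $\alg$ is a multiplicative semigroup compatible with the order), so $0\le\phi(ab)\le\lambda\phi(a)=0$. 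Since $\phi$ is a lattice homomorphism, $\phi(a^{+})=\phi(a^{-})=0$ whenever $\phi(a)=0$, and splitting $b=b^{+}-b^{-}$ we conclude $\phi(ab)=0$ whenever $\phi(a)=0$. Writing $a=(a-\phi(a)e)+\phi(a)e$ now gives $\phi(ab)=\phi(a)\phi(b)$ for all $a,b\in\alg$, hence $\psi_0(ab)(k)=\psi_0(a)(k)\psi_0(b)(k)$; as $k$ was arbitrary, $\psi_0$ is multiplicative. From now on I identify $\alg$ with $C(K_0)$ under the pointwise product, $\norm{\,\cdot\,}_e$ with $\norm{\,\cdot\,}_{\infty}$, and $e$ with $\one$.

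Next I would show that $\overline{\algtwo}^{\norm{\,\cdot\,}_e}$ is a Banach $f$-subalgebra of $\alg$. As a norm-closed linear subspace of $C(K_0)$ it is complete, and, the sup norm being submultiplicative, the norm closure of the subalgebra $\algtwo$ is again a subalgebra. It is also a sublattice, by the square-root trick: given $f\in\overline{\algtwo}^{\norm{\,\cdot\,}_e}$, pick polynomials $p_n$ with $p_n(0)=0$ converging uniformly on $[0,\norm{f}_{\infty}^{2}]$ to $t\mapsto\sqrt{t}$ (take Weierstrass approximants of the square-root function and subtract their constant terms, which tend to $0$); then each $p_n(f^{2})$ lies in $\overline{\algtwo}^{\norm{\,\cdot\,}_e}$, being a polynomial in $f$ without constant term, and $p_n(f^{2})\to\sqrt{f^{2}}=\abs{f}$ uniformly, so $\abs{f}\in\overline{\algtwo}^{\norm{\,\cdot\,}_e}$. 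Thus this set is closed under $\abs{\,\cdot\,}$, hence is a vector sublattice of $C(K_0)$, and therefore an $f$-subalgebra of $\alg$; this proves the first assertion.

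Finally, suppose $e=\one\in B\coloneqq\overline{\algtwo}^{\norm{\,\cdot\,}_e}$. Declare $k\sim k'$ in $K_0$ whenever $h(k)=h(k')$ for all $h\in B$. The set $\{(k,k')\colon k\sim k'\}=\bigcap_{h\in B}\{(k,k')\colon h(k)=h(k')\}$ is closed, so $\sim$ is a closed equivalence relation on the compact Hausdorff space $K_0$ and $K\coloneqq K_0/{\sim}$ is compact Hausdorff; the quotient map $q\colon K_0\to K$ induces an isometric Banach lattice algebra embedding $q^{\ast}\colon C(K)\to C(K_0)$ whose range consists of the functions that are constant on the $\sim$-classes. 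Every element of $B$ is of this form, so $B=q^{\ast}(\widetilde{B})$ for $\widetilde{B}\coloneqq(q^{\ast})^{-1}(B)$, and by construction $\widetilde{B}$ is a closed subalgebra of $C(K)$ containing $\one$ and separating the points of $K$; hence $\widetilde{B}=C(K)$ by the Stone--Weierstrass theorem \cite{aliprantis_border_INFINITE_DIMENSIONAL_ANALYSIS_THIRD_EDITION:2006}. Therefore $q^{\ast}$ restricts to an isometric surjective Banach lattice algebra isomorphism $C(K)\to B$, and composing it with $\psi_0$ yields the required $\psi$. Uniqueness of $K$ up to homeomorphism follows from the Banach--Stone theorem applied to $B\cong C(K)$ (equivalently, $K$ is the spectrum of the commutative unital Banach algebra $B$). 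The only step in which the hypotheses are used in an essential way is the multiplicativity of $\phi$ in the second paragraph---where it matters that $e$ is simultaneously the multiplicative identity and an order unit and that products of positive elements are positive---so I expect that to be the crux; the remainder is a routine assembly of the Kakutani representation theorem, the square-root trick, and Stone--Weierstrass.
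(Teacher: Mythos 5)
Your proof is correct, and its overall skeleton (Kakutani representation, then the observation that a closed subalgebra of a $C(K)$-space is automatically a sublattice, then a second representation step for $\overline{\algtwo}^{\norm{\,\cdot\,}_e}$) matches the paper's. Two steps are genuinely different. First, to see that the Kakutani lattice isometry is multiplicative, the paper transports the pointwise product of $C(K_0)$ back to $\alg$ and invokes the uniqueness of an $f$-algebra multiplication admitting a prescribed positive identity element \cite[Theorem~2.58]{aliprantis_burkinshaw_POSITIVE_OPERATORS_SPRINGER_REPRINT:2006}; you instead prove directly that every unital real-valued lattice homomorphism on $\alg$ is multiplicative, via the estimate $0\leq ab\leq\lambda a$ and the decomposition $a=(a-\phi(a)e)+\phi(a)e$. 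Your computation is elementary and self-contained (it only uses that positive elements have positive products and that $e$ is simultaneously the identity and an order unit), at the cost of being longer; the paper's citation is shorter but leans on a nontrivial uniqueness theorem. Second, for the final assertion the paper simply applies its first paragraph again to the AM-space $\overline{\algtwo}^{\norm{\,\cdot\,}_e}$ with unit $e$, whereas you stay inside $C(K_0)$, form the quotient $K=K_0/{\sim}$ by the closed equivalence relation of non-separation, and conclude by Stone--Weierstrass. Both are valid; yours has the small bonus of exhibiting $K$ concretely as a quotient of $K_0$, while the paper's is more economical. (Your square-root trick is exactly the content of the reference to \cite[Lemma~4.48]{folland_REAL_ANALYSIS_SECOND_EDITION:1999} in the paper, and both proofs end with Banach--Stone for the uniqueness of $K$.)
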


\begin{proof}
Since $(\alg, \norm{\,\cdot\,}_I)$ is an AM-space with order unit $e$, there exist a compact Hausdorff space $K^\prime$ and an isometric surjective lattice homomorphism $\psi^\prime:\alg\to C(K^\prime)$ such that $\psi^\prime(e)=1$; see \cite[Theorem~2.1.3]{meyer-nieberg_BANACH_LATTICES:1991} for this result of Kakutani's, for example. Via this isomorphism, the $f$-algebra multiplication on $C(K^\prime)$ provides the vector lattice $\alg$ with a multiplication that makes $\alg$ into an $f$-algebra with $e$ as its positive multiplicative identity element. Such a multiplication is, however, unique; see \cite[Theorem~2.58]{aliprantis_burkinshaw_POSITIVE_OPERATORS_SPRINGER_REPRINT:2006}. Hence $\psi^\prime$ also preserves multiplication, and we conclude that $\psi^\prime:\alg\to C(K^\prime)$ is an isometric surjective Banach lattice algebra isomorphism.

We now turn to $\algtwo$. It is clear that $\overline{\algtwo}^{\norm{\,\cdot\,}_e}$ is Banach subalgebra of $\alg$. After moving to the $C(K^\prime)$-model for $\alg$ that we have obtained, \cite[Lemma~4.48]{folland_REAL_ANALYSIS_SECOND_EDITION:1999} shows that $\overline{\algtwo}^{\norm{\,\cdot\,}_e}$ is also a vector sublattice of $\alg$. Hence $\overline{\algtwo}^{\norm{\,\cdot\,}_e}$ is a Banach $f$-subalgebra of $\alg$. When $e\in\overline{\algtwo}^{\norm{\,\cdot\,}_e}$, we can then apply the first part of the proof to $\overline{\algtwo}^{\norm{\,\cdot\,}_e}$, and obtain a compact Hausdorff space $K$ and an isometric surjective Banach lattice algebra isomorphism $\psi: \overline{\algtwo}^{\norm{\,\cdot\,}_e}\to C(K)$.  The Banach--Stone theorem (see \cite[Theorem~VI.2.1]{conway_A_COURSE_IN_FUNCTIONAL_ANALYSIS_SECOND_EDITION:1990}, for example) implies that $K$ is uniquely determined up to homeomorphism.
\end{proof}

We now proceed to show that $E$ and $\Orth(E)$ have isomorphic universal completions. We start with a preparatory lemma.

\begin{proposition}\label{res:two_ideals_isomorphic}
Let $E$ be a Dedekind complete vector lattice, and let $x\in E$. Let $I_x$ be the principal ideal of $E$ that is generated by $x$, let $B_x$ be the principal band in $E$ that is generated by $x$, let $P_x: E\to B_x$ be the corresponding order projection, and let $\opideal_{P_x}$ be the principal ideal of $\regops(E)$ that is generated by $P_x$. For $T\in \opideal_{P_x}$, set $\psi_x(T)\coloneqq T\abs{x}$. Then $\psi_x(T)\in I_x$, and:
\begin{enumerate}
	\item the map  $\psi_x: \opideal_{P_x}\to I_x$ is a surjective vector lattice isomorphism such that $\psi_x(P_x)=\abs{x}$;
	\item $\opideal_{P_x}=P_x\centre(E)$.
\end{enumerate}	
\end{proposition}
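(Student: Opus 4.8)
The plan is to prove the two statements about $\psi_x$ by first pinning down what $\opideal_{P_x}$ looks like and then verifying that $\psi_x$ is a bijective lattice isomorphism onto $I_x$. The key observation driving everything is that the order projection $P_x$ acts as the identity on $B_x$ and annihilates $B_x^{\mathrm d}$, so that any operator $T$ with $\abs{T}\leq\lambda P_x$ is determined by its restriction to $B_x$ and lives inside the algebra of operators that are "small" relative to $P_x$.

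First I would establish part~(2), $\opideal_{P_x}=P_x\centre(E)$. For the inclusion $\supseteq$, if $U\in\centre(E)$ with $\abs{U}\leq\lambda\Id$, then $\abs{P_x U}=P_x\abs{U}\leq\lambda P_x$ (using that $P_x$ is a positive orthomorphism, so it commutes with taking moduli via \cref{eq:modulus_of_orthormorphism} and multiplies), hence $P_xU\in\opideal_{P_x}$. For the reverse inclusion, take $T\in\opideal_{P_x}$, so $\abs{T}\leq\lambda P_x\leq\lambda\Id$ for some $\lambda\geq 0$. Since $\Orth(E)$ is the band generated by $\Id$ in $\regops(E)$ when $E$ is Dedekind complete, and $\abs{T}\leq\lambda\Id$, we get $T\in\centre(E)$ by the characterisation of $\centre(E)$ recalled just before the statement. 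Moreover $\abs{T}\leq\lambda P_x$ forces $\abs{T}$, and hence $T$, to vanish on $B_x^{\mathrm d}=\ker P_x$; therefore $T=TP_x$, and since $\Orth(E)$ is commutative, $T=P_xT=P_x T$ with $T\in\centre(E)$, giving $T\in P_x\centre(E)$. This also shows $\psi_x(P_x)=P_x\abs{x}=\abs{x}$ since $\abs{x}\in B_x$.

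Next, for part~(1): that $\psi_x$ is linear is immediate; that $\psi_x(T)=T\abs{x}\in I_x$ follows because $\abs{T\abs{x}}=\abs{T}\abs{x}\leq\lambda P_x\abs{x}=\lambda\abs{x}$, so $\psi_x(T)$ lies in the principal ideal generated by $x$. That $\psi_x$ is a lattice homomorphism follows from \cref{eq:sup_of_two_orthormorphisms}: for $S,T\in\opideal_{P_x}$ one has $(S\vee T)\abs{x}=S\abs{x}\vee T\abs{x}$ since $\abs{x}\in\posE$, and similarly for $\wedge$; alternatively, since all our operators are in $\centre(E)$ which is an $f$-algebra, one can argue via $\abs{\psi_x(T)}=\abs{T}\abs{x}=\psi_x(\abs T)$. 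For injectivity: if $T\abs{x}=0$ then, $T$ being an orthomorphism, $\ker T$ is a band containing $\abs{x}$, hence containing $B_x$; but $T$ also vanishes on $B_x^{\mathrm d}$ by the above, so $T=0$ on all of $E=B_x\oplus B_x^{\mathrm d}$. For surjectivity: given $y\in I_x$, so $\abs{y}\leq\mu\abs{x}$ for some $\mu\geq 0$, I would invoke the already-established isomorphism $\psi_{\abs x}\colon I_{\abs x}\to B_{\abs x}$-components... more directly, one uses that on the Dedekind complete vector lattice $E$ the principal ideal $I_x=I_{\abs x}$ carries a natural $\centre(E)$-module structure, and the multiplication operator "multiply by $y/\abs{x}$" — made precise via the $C(K)$ functional calculus of \cref{res:f-algebra_is_C(K)-space} applied inside the AM-space $I_{\abs x}$ with order unit $\abs x$, or via Dedekind completeness directly — produces an element $U\in\centre(E)$ with $U\abs{x}=y$ and $\abs{U}\leq\mu\Id$, so that $T\coloneqq P_xU\in\opideal_{P_x}$ satisfies $\psi_x(T)=P_xU\abs x=U\abs x=y$ (the last step using $\abs x\in B_x$, so $P_x$ fixes $U\abs x\in I_x\subseteq B_x$).

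\textbf{The main obstacle} I anticipate is surjectivity, specifically constructing, for a given $y\in I_x$ with $\abs y\leq\mu\abs x$, a central operator $U$ realising "multiplication by $y$ over $\abs x$". The clean way is: by Kakutani, $I_{\abs x}$ with order unit $\abs x$ is (lattice and, for the $\centre$-action, algebra) isometric to some $C(K)$ with $\abs x\leftrightarrow 1$; then $y\leftrightarrow$ some $g\in C(K)$ with $\abs g\leq\mu$, and "multiplication by $g$" is a well-defined element of $\centre(I_{\abs x})=\Orth(I_{\abs x})$ bounded by $\mu\Id$; finally one extends this orthomorphism of the ideal $I_{\abs x}$ to an orthomorphism of all of $E$ bounded by $\mu\Id$ — this extension step uses Dedekind completeness of $E$ together with the standard fact that an orthomorphism on an order dense ideal, being order continuous and band preserving, extends uniquely to the whole space (e.g. via \cite[Theorem~2.45]{aliprantis_burkinshaw_POSITIVE_OPERATORS_SPRINGER_REPRINT:2006} identifying $\Orth(E)$ with a band in $\regops(E)$, or by defining $U$ to be $0$ on $B_x^{\mathrm d}$). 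I would present this extension carefully, as it is the one place where Dedekind completeness is genuinely used beyond the existence of $P_x$.
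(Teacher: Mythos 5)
Your proof is correct and follows the same overall skeleton as the paper's (injectivity from the fact that the kernel of an orthomorphism is a band together with the vanishing of $T$ on $B_x^{\mathrm{d}}$; the lattice homomorphism property from \cref{eq:sup_of_two_orthormorphisms}; surjectivity by producing a central ``multiplication'' operator $U$ with $U\abs{x}=y$), but you diverge at two points. For surjectivity the paper simply cites an inspection of the proof of \cite[Theorem~2.49]{aliprantis_burkinshaw_POSITIVE_OPERATORS_SPRINGER_REPRINT:2006}, which manufactures $U\in\centre(E)$ essentially via Freudenthal's spectral theorem, whereas you rebuild it from the Kakutani representation of $I_{\abs{x}}$ as a $C(K)$-space followed by an order continuous extension from $I_{\abs{x}}$, which is order dense only in $B_x$ rather than in $E$, combined with setting $U=0$ on $B_x^{\mathrm{d}}$; this is valid and self-contained, at the small cost of having to note that $I_{\abs{x}}$ is uniformly complete (true, since $E$ is Dedekind complete) so that Kakutani applies, and of carrying out the two-stage extension carefully, as you indicate. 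For part~\textup{(2)} the paper deduces $T=P_xT$ from the injectivity of $\psi_x$ already established in part~\textup{(1)} (since $\psi_x(T)=\psi_x(P_xT)$), while you argue directly that $\abs{T}\leq\lambda P_x$ forces $T$ to vanish on $B_x^{\mathrm{d}}$, hence $T=TP_x=P_xT$ by the commutativity of $\Orth(E)$; your version is marginally more elementary, decouples part~\textup{(2)} from part~\textup{(1)}, and lets you present the two parts in the opposite order without circularity. Neither deviation introduces a gap.
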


\begin{proof}
		Take $T\in\opideal_{P_x}$. There exists a $\lambda\geq 0$ such that $\abs{T}\leq \lambda P_x$, and this implies that $\abs{Ty}\leq \lambda P_x\abs{y}$ for all $y\in E$. This shows that $T\abs{x}\in I_x$, so that $\psi_x$ maps $\opideal_{P_x}$ into $I_x$; it also shows that $T(B_x^\mathrm{d})=\{0\}$. Suppose that $T\abs{x}=0$. Since the kernel of $T$ is a band in $E$, this implies that $T$ vanishes on $B_x$. We already know that it vanishes on $B_x^{\mathrm d}$. Hence $T=0$, and we conclude that $\psi_x$ is injective. We show that $\psi_x$ is surjective. Let $y\in I_x$. Take a $\lambda>0$ such that $0\leq \abs{y/\lambda}\leq\abs{x}$. An inspection of the proof of \cite[Theorem~2.49]{aliprantis_burkinshaw_POSITIVE_OPERATORS_SPRINGER_REPRINT:2006} shows that there exists a $T\in\centre(E)$ with $T\abs{x}=y/\lambda$. Since $\lambda T P_x \in\opideal_{P_x}$ and $(\lambda T P_x) \abs{x}=y$, we see that $\psi_x$ is surjective. Finally, it is clear from \cref{eq:sup_of_two_orthormorphisms} that $\psi_x$ is a vector lattice homomorphism. This completes the proof of part~(1).

	We turn to part~(2). It is clear that $\opideal_{P_x}\supseteq P_x\centre(E)$. Take $T\in\opideal_{P_x}\subseteq\centre(E)$. Then also $P_xT\in\opideal_{P_x}$. Since $\psi_x(T)=\psi_x(P_xT)$, the injectivity of $\psi_x$ on $\opideal_{P_x}$ implies that $T=P_xT\in P_x\centre(E)$.
	\end{proof}

The first part of \cref{res:two_ideals_isomorphic} is used in the proof of our next result.

\begin{proposition}\label{res:isomorphic_order_dense_ideals}
	Let $E$ be a Dedekind complete  vector lattice. Then there exist an order dense ideal $I$ of $E$ and an order dense ideal $\opideal$ of $\Orth(E)$ such that $I$ and $\opideal$ are isomorphic vector lattices.
\end{proposition}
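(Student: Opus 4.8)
The plan is to construct both ideals from a single maximal disjoint system in $E$, and to obtain the required isomorphism by gluing together the local isomorphisms supplied by \cref{res:two_ideals_isomorphic}.

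First I would fix, via Zorn's lemma, a maximal disjoint system $\{x_i\}_{i\in J}$ in $\posE\setminus\{0\}$. Writing $B_{x_i}$ for the principal band generated by $x_i$ and $P_{x_i}$ for the associated order projection, and keeping the notation $I_{x_i}$, $\opideal_{P_{x_i}}$, $\psi_{x_i}$ of \cref{res:two_ideals_isomorphic}, I let $I$ be the ideal of $E$ generated by $\{x_i:i\in J\}$ and $\opideal$ the ideal of $\Orth(E)$ generated by $\{P_{x_i}:i\in J\}$; by part~(2) of \cref{res:two_ideals_isomorphic} each $\opideal_{P_{x_i}}$ lies in $\centre(E)\subseteq\Orth(E)$, so $\opideal$ is an ideal of $\Orth(E)$ containing every $\opideal_{P_{x_i}}$. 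Since the $x_i$ are pairwise disjoint, so are the bands $B_{x_i}$, hence so are the order projections $P_{x_i}$ in $\Orth(E)$; a routine argument about ideals generated by disjoint elements then gives $I=\bigoplus_{i\in J}I_{x_i}$ and $\opideal=\bigoplus_{i\in J}\opideal_{P_{x_i}}$ as internal order direct sums of pairwise disjoint principal ideals. Consequently the vector lattice isomorphisms $\psi_{x_i}\colon\opideal_{P_{x_i}}\to I_{x_i}$ from part~(1) of \cref{res:two_ideals_isomorphic} glue to a vector lattice isomorphism $\Psi\colon\opideal\to I$; explicitly, $\Psi(T)=\sum_{i\in J}Tx_i$ for $T\in\opideal$, a sum in which all but finitely many terms vanish.

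It then remains to verify that $I$ and $\opideal$ are order dense. For $I$ this is immediate: given $0<y\in E$, maximality of the disjoint system forces $y\wedge x_i>0$ for some $i$, and $0<y\wedge x_i\leq y$ belongs to $I_{x_i}\subseteq I$. For $\opideal$ I would use two facts. First, $\sup_{i\in J}P_{x_i}=\Id$ in $\Orth(E)$, since by maximality the band generated by $\{x_i:i\in J\}$ is all of $E$. Second, for every $S'\in\pos{\Orth(E)}$ the map $R\mapsto S'R$ on $\regops(E)$ is order continuous; this follows from the order continuity of the orthomorphism $S'$ on $E$ together with \cite[Theorem~1.18]{aliprantis_burkinshaw_POSITIVE_OPERATORS_SPRINGER_REPRINT:2006} and the Dedekind completeness of $\regops(E)$. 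Now let $0<S\in\Orth(E)$ and put $S'\coloneqq S\wedge\Id$; then $S'>0$ because $\Id$ is a weak order unit of $\Orth(E)$, and $0\leq S'\leq\Id$, so $S'\in\centre(E)$. If $S'P_{x_i}=0$ held for all $i\in J$, then $S'\bigvee_{i\in F}P_{x_i}=0$ for every finite $F\subseteq J$, and order continuity of $R\mapsto S'R$ applied to the increasing net $\bigl(\bigvee_{i\in F}P_{x_i}\bigr)_F$, which has supremum $\Id$, would give $S'=0$, a contradiction. Hence $S'P_{x_{i_0}}>0$ for some $i_0\in J$, and from $0\leq S'\leq\Id$ and $0\leq P_{x_{i_0}}\leq\Id$ one gets $0<S'P_{x_{i_0}}\leq P_{x_{i_0}}$ and $S'P_{x_{i_0}}\leq S'\leq S$; thus $S'P_{x_{i_0}}\in\opideal_{P_{x_{i_0}}}\subseteq\opideal$ and $0<S'P_{x_{i_0}}\leq S$, which proves the order density of $\opideal$.

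The main obstacle is the order density of $\opideal$. One cannot work with $S$ directly, since $S$ need not be dominated by any multiple of $\Id$, so $SP_{x_i}$ need not lie in $\opideal_{P_{x_i}}$; the remedy is to truncate to $S\wedge\Id\in\centre(E)$ first, and one then needs the order continuity of composition with $S\wedge\Id$ to locate an index at which the truncation is still nonzero. Everything else — the gluing of the $\psi_{x_i}$, the order direct sum decompositions, and the order density of $I$ — is routine.
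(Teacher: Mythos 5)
Your proof is correct, and its skeleton is the same as the paper's: a maximal disjoint system $\{x_i\}$ in $\posE$, the internal direct sums $\bigoplus_i I_{x_i}$ and $\bigoplus_i\opideal_{P_{x_i}}$, and the gluing of the isomorphisms $\psi_{x_i}$ from \cref{res:two_ideals_isomorphic}. The only place where you diverge is the order density of $\opideal$, and there your route is genuinely different: you use the $f$-algebra structure of $\Orth(E)$ (that $\sup_i P_{x_i}=\Id$, the truncation $S'=S\wedge\Id\in\centre(E)$, and the order continuity of $R\mapsto S'R$) to produce a nonzero minorant $S'P_{x_{i_0}}$ of $S$ inside some $\opideal_{P_{x_{i_0}}}$. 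The paper instead shows that $\{P_{x_i}\}$ is a \emph{maximal disjoint system in $\Orth(E)$}: if $\abs{T}\wedge P_{x_i}=0$ for all $i$, then $(\abs{T}x_i)\wedge x_i=(\abs{T}\wedge P_{x_i})x_i=0$, and since $\abs{T}$ is band preserving this forces $\abs{T}x_i=0$; the kernel of $\abs{T}$ being a band then gives $\abs{T}=0$. Order density of $\opideal$ then follows by exactly the same one-line argument you used for $I$, taking $0<S\wedge P_{x_{i_0}}\leq P_{x_{i_0}}$ for a suitable $i_0$ --- this also dissolves the obstacle you flagged (that $SP_{x_i}$ need not lie in $\opideal_{P_{x_i}}$), since one takes the lattice infimum with $P_{x_{i_0}}$ rather than the product. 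Your multiplicative argument is valid but buys nothing extra here; the paper's version is shorter and keeps the two order-density proofs symmetric.
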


\begin{proof}
	Choose a maximal disjoint system $\{\,x_\alpha : \alpha\in\is\,\}$ in $E$. For each $\alpha\in\is$, let $I_{x_\alpha}$, $B_{x_\alpha}$, $P_{x_\alpha}: E\to B_{x_\alpha}$, $\opideal_{P_{x_\alpha}}$, and the vector lattice isomorphism $\psi_{x_\alpha}: \opideal_{P_{x_\alpha}}\to I_{x_\alpha}$ be as in \cref{res:two_ideals_isomorphic}.
	
	Since the $x_\alpha$'s are mutually disjoint, it is clear that the ideal $\sum_{\alpha\in\is} I_{x_\alpha}$ of $E$ is, in fact, an internal direct sum $\bigoplus_{\alpha\in\is} I_{x_\alpha}$. Since the disjoint system is maximal, $\bigoplus_{\alpha\in\is} I_{x_\alpha}$ is an order dense ideal of $E$.
	
	It follows easily from \cref{eq:sup_of_two_orthormorphisms} that the $P_{x_\alpha}$ are also mutually disjoint. They even form a maximal disjoint system in $\Orth(E)$. To see this, suppose that $T\in\Orth(E)$ is such that $\abs{T}\wedge P_{x_\alpha}=0$ for all $\alpha\in\is$. Then $(\abs{T}x_\alpha)\wedge x_\alpha=(\abs{T}\wedge P_{x_\alpha})x_\alpha=0$ for all $\alpha\in\is$. Since $\abs{T}$ is band preserving, this implies that $\abs{T}x_\alpha=0$ for all $\alpha\in\is$. The fact that the kernel of $\abs{T}$ is a band in $E$ then yields that $\abs{T}=0$. Just as for $E$, we now conclude that the ideal $\sum_{\alpha\in\is} \opideal_{P_{x_\alpha}}$ of $\Orth(E)$ is an internal direct sum $\bigoplus_{\alpha\in\is} \opideal_{P_{x_\alpha}}$ that is order dense in $\Orth(E)$.
	
	Since  $\bigoplus_{\alpha\in\is}\psi_{x_{\alpha}}:\bigoplus_{\alpha\in\is} \opideal_{P_{x_\alpha}}\to\bigoplus_{\alpha\in\is} I_{x_\alpha}$ is a vector lattice isomorphism by \cref{res:two_ideals_isomorphic}, the proof is complete.
	\end{proof}

It is generally true that a vector lattice and an order dense vector sublattice of it have isomorphic universal completions; see \cite[Theorems~7.21 and~7.23]{aliprantis_burkinshaw_LOCALLY_SOLID_RIESZ_SPACES_WITH_APPLICATIONS_TO_ECONOMICS_SECOND_EDITION:2003}. \cref{res:isomorphic_order_dense_ideals} therefore implies the following.

\begin{corollary}\label{res:isomorphic_universal_completions}	
	Let $E$ be a Dedekind complete vector lattice. Then the universal completions of $E$ and of $\Orth(E)$ are isomorphic vector lattices.
\end{corollary}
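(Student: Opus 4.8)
The plan is simply to chain \cref{res:isomorphic_order_dense_ideals} with the general fact, recalled just above the statement, that a vector lattice and an order dense vector sublattice of it have isomorphic universal completions (see \cite[Theorems~7.21 and~7.23]{aliprantis_burkinshaw_LOCALLY_SOLID_RIESZ_SPACES_WITH_APPLICATIONS_TO_ECONOMICS_SECOND_EDITION:2003}). \cref{res:isomorphic_order_dense_ideals} provides an order dense ideal $I$ of $E$, an order dense ideal $\opideal$ of $\Orth(E)$, and a vector lattice isomorphism $\varphi\colon I\to\opideal$. An order dense ideal is in particular an order dense vector sublattice, so the cited result is applicable to both inclusions $I\subseteq E$ and $\opideal\subseteq\Orth(E)$.

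Concretely, I would argue as follows. The inclusion $I\subseteq E$ yields a vector lattice isomorphism between the universal completion of $I$ and that of $E$; the inclusion $\opideal\subseteq\Orth(E)$ yields a vector lattice isomorphism between the universal completion of $\opideal$ and that of $\Orth(E)$; and the isomorphism $\varphi$ extends to a vector lattice isomorphism between the universal completion of $I$ and that of $\opideal$ (either directly from the universal property, or because the universal completion is unique up to isomorphism and $\varphi$ maps $I$ onto $\opideal$). Composing these three isomorphisms gives a vector lattice isomorphism between the universal completion of $E$ and that of $\Orth(E)$.

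There is no genuine obstacle here: all the work has already been done in \cref{res:isomorphic_order_dense_ideals}, and what remains is a short diagram chase through the three isomorphisms above. The only points worth keeping in mind are that ``universal completion'' is well defined only up to vector lattice isomorphism, so the conclusion is necessarily an isomorphism statement rather than an equality, and that the Dedekind completeness of $E$ enters solely through its role in making \cref{res:isomorphic_order_dense_ideals} available.
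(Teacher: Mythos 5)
Your proposal is correct and follows exactly the route the paper takes: \cref{res:isomorphic_order_dense_ideals} supplies isomorphic order dense ideals of $E$ and of $\Orth(E)$, and the cited fact that a vector lattice and an order dense vector sublattice have isomorphic universal completions then yields the conclusion by composing the three resulting isomorphisms.
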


The previous result enables us to relate the countable sup property of $E$ to that of $\Orth(E)$. We recall that  vector lattice $E$ \emph{has the countable sup property} when, for every non-empty subset $S$ of $E$ that has a supremum in $E$, there exists an at most countable subset of $S$ that has the same supremum in $E$ as $S$.
In parts of the literature, such as in \cite{luxemburg_zaanen_RIESZ_SPACES_VOLUME_I:1971} and \cite{zaanen_INTRODUCTION_TO_OPERATOR_THEORY_IN_RIESZ_SPACES:1997}, $E$ is then said to be order separable. We also recall that a subset of a vector lattice is said to be an \emph{order basis} when the band that it generates is the whole vector lattice.

\begin{proposition} Let $E$ be a Dedekind complete vector lattice. The following are equivalent:
	\begin{enumerate}
		\item $\Orth(E)$ has the countable sup property;
		\item $E$ has the countable sup property and an at most countably infinite order basis.
	\end{enumerate}	
\end{proposition}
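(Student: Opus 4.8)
The plan is to derive both implications from \cref{res:two_ideals_isomorphic,res:isomorphic_order_dense_ideals}, applied to a suitably chosen maximal disjoint system, together with two elementary facts about the countable sup property. First, every ideal $J$ of a vector lattice $E$ with the countable sup property again has it: if $S\subseteq\pos{J}$ has $\sup_J S=u$, then in fact $\sup_E S=u$ as well, since an upper bound $w$ of $S$ in $E$ may be replaced by $w\wedge u\in J$; hence a countable cofinal subset of $S$ in $E$ is also one in $J$. Second, if $E$ is Dedekind complete and has the countable sup property, then every disjoint subset of an order interval $[0,u]$ of $E$ is at most countable: the supremum $v$ of such a family exists by Dedekind completeness, some countable subfamily already has supremum $v$, and any remaining member is $\leq v$ and disjoint from each member of that subfamily, hence disjoint from $v$, hence zero.

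The crux is the following reduction, which I would establish next. \emph{Let $G$ be a Dedekind complete vector lattice and let $\{g_i\}_{i\in\is}$ be a maximal disjoint system in $G$ with $\is$ at most countable. Then $G$ has the countable sup property if and only if every principal ideal $I_{g_i}$ of $G$ has it.} The forward direction is the first fact above. For the converse one may assume $g_i\geq 0$; given $S\subseteq G$ with $\sup_G S=u$, after translating by an element of $S$ and passing to positive parts we may assume $S\subseteq\pos{G}$ and $u\geq 0$. Let $P_i$ be the band projection of $G$ onto $B_{g_i}$. Maximality of $\{g_i\}$ yields $\sum_{i\leq n}P_i(u)\uparrow u$, and since $g_i$ is a weak order unit of $B_{g_i}$ one has $u\wedge n g_i\uparrow P_i(u)$ as $n\to\infty$; together these give $u_n:=\sum_{i=1}^{n}(u\wedge n g_i)\uparrow u$. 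Because $u_n\leq n(g_1+\dots+g_n)$, each $u_n$ lies in the principal ideal $I_{g_1+\dots+g_n}$, which equals the finite direct sum $I_{g_1}\oplus\dots\oplus I_{g_n}$ and hence has the countable sup property. As $\sup\{\,s\wedge u_n:s\in S\,\}=u\wedge u_n=u_n$, there is a countable $T_n\subseteq S$ with $\sup\{\,s\wedge u_n:s\in T_n\,\}=u_n$; then $\bigcup_n T_n$ is a countable subset of $S$ with supremum $u$, since any of its upper bounds — which may be taken $\leq u$ — dominates every $u_n$ and therefore $\sup_n u_n=u$.

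Granting this, I argue $(1)\Rightarrow(2)$ as follows. Assume $\Orth(E)$ has the countable sup property and fix any maximal disjoint system $\{x_\alpha\}_{\alpha\in\is}$ of $E$. By the proof of \cref{res:isomorphic_order_dense_ideals}, $\{P_{x_\alpha}\}_{\alpha\in\is}$ is a maximal disjoint system of $\Orth(E)$ consisting of order projections, so $0\leq P_{x_\alpha}\leq\Id$; the second fact above, applied in $\Orth(E)$ to this disjoint subset of $[0,\Id]$, forces $\is$ to be at most countable, so $\{x_\alpha\}_{\alpha\in\is}$ is an at most countably infinite order basis of $E$. Moreover each principal ideal $\opideal_{P_{x_\alpha}}$ of $\Orth(E)$ has the countable sup property by the first fact, hence so does $I_{x_\alpha}$ via the vector lattice isomorphism $\psi_{x_\alpha}:\opideal_{P_{x_\alpha}}\to I_{x_\alpha}$ of \cref{res:two_ideals_isomorphic}; the reduction lemma applied to $E$ with $\{x_\alpha\}_{\alpha\in\is}$ then shows that $E$ has the countable sup property. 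For $(2)\Rightarrow(1)$, let $E$ have the countable sup property and an at most countably infinite order basis $\{b_n\}$. After replacing each $b_n$ by $\abs{b_n}$, set $c_1:=b_1$ and let $c_{n+1}$ be the component of $b_{n+1}$ disjoint from $c_1,\dots,c_n$; the $c_n$ are pairwise disjoint and one readily checks that each $b_n$ lies in the band generated by $c_1,\dots,c_n$, so, discarding the zero terms, $\{c_n:c_n\neq 0\}$ is an at most countable maximal disjoint system of $E$. Applying \cref{res:isomorphic_order_dense_ideals,res:two_ideals_isomorphic} as before, $\{P_{c_n}\}$ is an at most countable maximal disjoint system of $\Orth(E)$ and each principal ideal $\opideal_{P_{c_n}}\cong I_{c_n}$ of $\Orth(E)$ has the countable sup property; the reduction lemma, applied to $\Orth(E)$ — which is Dedekind complete, being a band in $\regops(E)$ — then gives that $\Orth(E)$ has the countable sup property.

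I expect the main obstacle to be the ``if'' direction of the reduction lemma. The countable sup property does \emph{not} pass, in general, from an order dense ideal to the ambient Dedekind complete vector lattice — for an uncountable index set the finitely supported functions have it while the full product does not — so the at most countable maximal disjoint system is genuinely needed, and the point is precisely that the truncations $u_n=\sum_{i\leq n}(u\wedge n g_i)$ increase to $u$ while remaining inside the finite direct sums $I_{g_1}\oplus\dots\oplus I_{g_n}$, which do inherit the property. Everything else is bookkeeping with \cref{res:two_ideals_isomorphic,res:isomorphic_order_dense_ideals} and the two facts recorded at the outset.
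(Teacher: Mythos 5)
Your proof is correct, but it takes a genuinely different route from the paper. The paper's argument is a two-step application of an external result of Kandi\'{c} and Taylor (their Theorem~6.2: for any vector lattice $F$, the universal completion $F^{\mathrm u}$ has the countable sup property if and only if $F$ has it together with an at most countably infinite order basis): since $\Orth(E)$ has the weak order unit $\Id$, that theorem identifies the countable sup property of $\Orth(E)$ with that of $\Orth(E)^{\mathrm u}$, and since $\Orth(E)^{\mathrm u}\cong E^{\mathrm u}$ by \cref{res:isomorphic_universal_completions}, a second application to $E$ finishes the proof in a few lines. You instead bypass the universal completion entirely and, in effect, re-derive the relevant special case of the Kandi\'{c}--Taylor result from scratch: your reduction lemma (for a Dedekind complete vector lattice with an at most countable maximal disjoint system, the countable sup property is equivalent to that of the principal ideals generated by the members of the system, via the truncations $u_n=\sum_{i\leq n}(u\wedge ng_i)\uparrow u$) is the real content, and the transfer between $E$ and $\Orth(E)$ is then done through the isomorphisms $\psi_{x_\alpha}\colon\opideal_{P_{x_\alpha}}\to I_{x_\alpha}$ of \cref{res:two_ideals_isomorphic} and the matching maximal disjoint systems from the proof of \cref{res:isomorphic_order_dense_ideals} --- which is exactly the structural input underlying \cref{res:isomorphic_universal_completions}, so the two proofs ultimately rest on the same decomposition. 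What the paper's route buys is brevity and reuse of the literature; what yours buys is a self-contained, elementary argument that makes the mechanism visible (in particular, why countability of the disjoint system is indispensable, as your counterexample of finitely supported functions inside an uncountable product shows). All the individual steps check out: the inheritance of the countable sup property by ideals, the countability of order bounded disjoint systems, the identification $I_{g_1+\dots+g_n}=I_{g_1}\oplus\dots\oplus I_{g_n}$ for disjoint positive $g_i$, and both transfers between $E$ and $\Orth(E)$.
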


\begin{proof}
It is proved in \cite[Theorem~6.2]{kandic_taylor:2018} that, for an arbitrary vector lattice $F$, $F^\mathrm{u}$ has the countable sup property if and only if $F$ has the countable sup property as well as an at most countably infinite order basis. Since $\Orth(E)$ has a weak order unit $\Id$, we see that $\Orth(E)^\mathrm{u}$ has the countable sup property if and only if $\Orth(E)$ has the countable sup property. On the other hand, since $\Orth(E)^\mathrm{u}$ and $E^\mathrm{u}$ are isomorphic by \cref{res:isomorphic_universal_completions}, an application of this same result to $E$ shows that $\Orth(E)^\mathrm{u}$ has the countable sup property if and only if $E$ has the countable sup property and an at most countably infinite order basis.
\end{proof}

	We shall now establish a uniform order boundedness principle for orthomorphisms. It will be needed in the proof of \cref{res:oRKF}, below. 
	
	\begin{proposition}\label{res:UBP}
		Let $E$ be a Dedekind complete vector lattice, and let $\{\,T_\alpha : \alpha\in \is\,\}$ be a non-empty subset of $\Orth(E)$. The following are equivalent:
		\begin{enumerate}
			\item $\{\,T_\alpha : \alpha\in \is\,\}$ is an order bounded subset of $\regops(E)$;
			\item for each $x\in E$, $\{\, T_\alpha x : \alpha\in \is\,\}$ is an order bounded subset of $E$.
		\end{enumerate}
		
	\end{proposition}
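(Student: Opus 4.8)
The plan is to prove the substantive implication from \uppars{2} to \uppars{1}; the reverse implication is immediate, since if $\abs{T_\alpha}\le R$ in $\regops(E)$ for every $\alpha\in\is$, then $\abs{T_\alpha x}\le\abs{T_\alpha}\abs{x}\le R\abs{x}$ shows that $\{T_\alpha x : \alpha\in\is\}\subseteq[-R\abs{x},R\abs{x}]$ for all $x\in E$. The only special feature of orthomorphisms that will be used is that, by \cref{eq:sup_of_two_orthormorphisms}, the lattice operations in $\Orth(E)$ are computed pointwise on $\posE$; this is precisely what is unavailable for general order bounded operators, and it is the reason why the analogue of this proposition fails for them, as \cref{exam:counterexample_1} shows.

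I would start with two harmless reductions. First, in \uppars{2} it suffices to consider $x\in\posE$, because $T_\alpha x=T_\alpha x^+-T_\alpha x^-$ exhibits $\{T_\alpha x:\alpha\in\is\}$ as the difference of two order bounded sets. Second, replacing each $T_\alpha$ by $\abs{T_\alpha}\in\Orth(E)^+$ preserves \uppars{2}: for $x\in\posE$ one has $\abs{T_\alpha}x=\abs{T_\alpha x}$ by \cref{eq:modulus_of_orthormorphism}, and $\{\abs{T_\alpha x}:\alpha\in\is\}$ is order bounded whenever $\{T_\alpha x:\alpha\in\is\}$ is; moreover any order bound in $\regops(E)^+$ for $\{\abs{T_\alpha}:\alpha\in\is\}$ is one for $\{T_\alpha:\alpha\in\is\}$ as well. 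So from now on every $T_\alpha$ may be assumed to be a positive orthomorphism.

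The heart of the argument is to pass to finite suprema. For a non-empty finite subset $F\subseteq\is$, set $G_F\coloneqq\bigvee_{\alpha\in F}T_\alpha\in\Orth(E)^+$. Ordered by inclusion, the non-empty finite subsets of $\is$ form a directed set, and $\netgen{G_F}{F}$ is an increasing net in $\regops(E)^+$ with $T_\alpha\le G_{\{\alpha\}}$ for every $\alpha\in\is$. Applying \cref{eq:sup_of_two_orthormorphisms} repeatedly yields $G_F x=\bigvee_{\alpha\in F}T_\alpha x$ for every $x\in\posE$, so any order bound in $E$ of the set $\{T_\alpha x:\alpha\in\is\}$ is simultaneously an order bound of $\{G_F x:F\}$. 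Since $E$ is Dedekind complete, $Sx\coloneqq\sup_F G_F x$ therefore exists in $E$ for each $x\in\posE$.

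Finally I would recognise $S$ as the restriction to $\posE$ of a positive operator, using Dedekind completeness of the codomain once more. As $\netgen{G_F}{F}$ is \emph{increasing}, for $x,y\in\posE$ the nets $\netgen{G_F x}{F}$ and $\netgen{G_F y}{F}$ increase to $Sx$ and $Sy$, so $\netgen{G_F(x+y)}{F}=\netgen{G_F x+G_F y}{F}$ increases to $Sx+Sy$; hence $S$ is additive and positively homogeneous on $\posE$, and therefore extends to a positive operator $S\in\regops(E)$. Since $T_\alpha\le G_{\{\alpha\}}\le S$ for every $\alpha\in\is$, the family $\{T_\alpha:\alpha\in\is\}$ is order bounded in $\regops(E)$, which is \uppars{1}. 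The only step that needs genuine thought is the passage to the finite suprema $G_F$: it is what converts the pointwise hypothesis \uppars{2} into a usable bound, and it works \emph{only} because \cref{eq:sup_of_two_orthormorphisms} makes $G_F$ act on $\posE$ as $\bigvee_{\alpha\in F}T_\alpha$ — for arbitrary order bounded operators $G_F x$ would in general exceed $\bigvee_{\alpha\in F}T_\alpha x$, and there would be no reason for the $G_F$ to remain pointwise order bounded.
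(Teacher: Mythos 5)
Your proof is correct and follows essentially the same route as the paper: reduce to the moduli $\abs{T_\alpha}$ via \cref{eq:modulus_of_orthormorphism}, pass to the upward-directed family of finite suprema, use \cref{eq:sup_of_two_orthormorphisms} to see that these act pointwise and hence remain pointwise order bounded, and then invoke Dedekind completeness of $\regops(E)$ to produce an upper bound. The only difference is that where the paper cites \cite[Theorem~1.19]{aliprantis_burkinshaw_POSITIVE_OPERATORS_SPRINGER_REPRINT:2006} for the existence of the supremum of an upward-directed, pointwise bounded set of operators, you reprove that fact by hand via the Kantorovich extension argument.
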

	
	Before proceeding with the proof, we remark that, since $\Orth(E)$ is a projection band in $\regops(E)$, the order boundedness of the net could equivalently have been required in $\Orth(E)$.

	\begin{proof} It is trivial that part~(1) implies part~(2). We now show the converse. Take an $x\in\pos{E}$. The hypothesis in part~(2), together with \cref{eq:modulus_of_orthormorphism}, shows that  $\{\, \abs{T_\alpha} x : \alpha\in \is\,\}$ is an order bounded subset of $E$. Hence the same is true for $\{\, \abs{T_\alpha} x : \alpha\in \is\,\}^{\vee}$ which, in view of \cref{eq:sup_of_two_orthormorphisms}, equals $\{\,Sx : S \in \{\,\abs{T_\alpha}: \alpha\in \is\,\}^\vee\,\}$. Using \cite[Theorem~1.19]{aliprantis_burkinshaw_POSITIVE_OPERATORS_SPRINGER_REPRINT:2006}, we conclude that $\{\,S : S \in \{\,\abs{T_\alpha}: \alpha\in \is\,\}^\vee\,\}$ is bounded above in $\obops(E)$. Then the same is true for $\{\,\abs{T_\alpha} : \alpha\in \is\,\}$, as desired.
\end{proof}

\cref{res:UBP} fails for nets of general order bounded operators. It can, in fact, already fail for a sequence of order continuous operators on a Banach lattice, as is shown by the following example. 

	\begin{example}\label{exam:UBP_fail_in_obops}
		Let $E\coloneqq\ell_{\infty}(\NN)$, and let $(e_i)_{i=1}^{\infty}$ be the standard unit vectors in $E$. Let $S\in\ocops(E)$ be the right shift, and set $T_n\coloneqq S^n$ for $n\geq 1$. It is easy to see that $\seq{T_n x}{n}$ is order bounded in $E$ for all $x\in E$. %Indeed, take $x=\bigvee_{i=1}^\infty \lambda_n e_n$ with $M=\sup_n \abs{\lambda_n}$, then $\{T_n x:n\in \NN\}\subseteq [-Me, Me]$, where $e=\bigvee_{n}e_n$.
		We shall show, however, that $\seq{T_n}{n}$ is not order bounded in $\obops(E)$. To see this, we first note that $T_m\perp T_n$ for $m,n\geq 1$ with $m\neq n$. Indeed, for all $i\geq 1$, we have $0\leq (T_m\wedge T_m)e_i\leq T_m(e_i)\wedge T_n(e_i)=e_{m+i}\wedge e_{n+i}=0$. Hence $(T_m\wedge T_n)x=0$ for all $x\in I$, where $I$ is the ideal of $E$ that is spanned by $\{\,e_i:i\geq 1\,\}$. Since $I$ is order dense in $E$ and $T_n\wedge T_m\in \ocops(E)$, it follows that $T_n\wedge T_m=0$ for all $m,n\geq 1$ with $m\neq n$. Suppose that $T$ is an upper bounded of $\seq{T_n}{n}$ in $\obops(E)$. Set $e\coloneqq\bigvee_{i=1}^\infty e_i$. Using the disjointness of the $T_n$, we have 
		\[
		Te\geq \left(\bigvee_{i=1}^{n}T_i\right)e=\left(\sum_{i=1}^{n}T_i\right)e\geq ne_{n+1}
		\]
		for all $n\geq 1$, which is impossible. So $\seq{T_n}{n}$ is not order bounded in $\obops(E)$.
\end{example}

As a side result, we note the following consequence of \cref{res:UBP}. It is an ordered analogue of the familiar result for a sequence of bounded operators on a Banach space.

\begin{corollary}\label{res:coro_Tn}
	Let $E$ be a Dedekind complete vector lattice, and let $\seq{T_n}{n}$ be a sequence in $\Orth(E)$. Suppose that the sequence $\seq{T_nx}{n}$ is order convergent in $E$ for all $x\in E$. Then $\{\,T_n : n\geq 1\,\}$ is an order bounded subset of $\regops(E)$. For $x\in E$, define $T:E\to E$ by setting
	\[
	Tx\coloneqq \mathrm{o}\text{ --}\lim_{n\to\infty} T_n x.
	\]
	Then $T\in\Orth(E)$.
\end{corollary}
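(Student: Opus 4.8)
The plan is to first nail down the order boundedness of $\{\,T_n : n\geq 1\,\}$ and then to peel off the two defining properties of an orthomorphism for the pointwise limit $T$. For the order boundedness, fix $x\in E$. Since $\seqxn$... rather, since $\seq{T_nx}{n}$ is order convergent in $E$, it is order bounded in $E$: by definition of order convergence there is a net $(y_\beta)_{\beta\in\istwo}$ with $y_\beta\downarrow 0$ and some $\beta_0\in\istwo$ and $n_0$ with $\abs{Tx-T_nx}\leq y_{\beta_0}$ for all $n\geq n_0$, so a tail of $\seq{T_nx}{n}$ lies in an order interval, and adjoining the finitely many remaining terms keeps the set order bounded. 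Thus condition~(2) of \cref{res:UBP} holds, and \cref{res:UBP} gives that $\{\,T_n : n\geq 1\,\}$ is an order bounded subset of $\regops(E)$. In particular there is an $S\in\pos{\regops(E)}$ with $\abs{T_n}\leq S$, hence $\abs{T_n x}\leq S\abs{x}$ for all $x\in E$ and all $n$.

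Next I would observe that $T\colon E\to E$ is well defined and linear. This is immediate from the fact that order limits in a (Archimedean) vector lattice are unique and respect addition and scalar multiplication. For order boundedness of $T$, take $x\in E$; since the modulus is order continuous, $\abs{T_n x}\oconv\abs{Tx}$, and since $\abs{T_n x}\leq S\abs{x}$ for every $n$, passing to the order limit (which preserves inequalities against a fixed bound) yields $\abs{Tx}\leq S\abs{x}$. Hence $T$ is order bounded, dominated by $S$.

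The remaining point, and the only one requiring any thought, is that $T$ is band preserving. I would verify the disjointness-preserving formulation: if $x,y\in E$ with $x\perp y$, then, as each $T_n$ is band preserving, $T_n x\perp y$, i.e.\ $\abs{T_n x}\wedge\abs{y}=0$ for all $n$. From $T_n x\oconv Tx$ we get $\abs{T_n x}\oconv\abs{Tx}$ and therefore $\abs{T_n x}\wedge\abs{y}\oconv\abs{Tx}\wedge\abs{y}$, using order continuity of the lattice operations. Since the left-hand side is the constant net $0$, uniqueness of order limits forces $\abs{Tx}\wedge\abs{y}=0$, that is, $Tx\perp y$. So $T$ is band preserving, and being also order bounded, $T\in\Orth(E)$.

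I do not expect a genuine obstacle here: the essential input is the uniform order boundedness principle \cref{res:UBP}, and once the order boundedness of $\{\,T_n : n\geq 1\,\}$ is available, the rest is a routine exercise in the stability of order convergence under the vector lattice operations. The only places to be mildly careful are the elementary facts that an order convergent sequence is order bounded, and that passing to order limits preserves both the lattice operations and inequalities against a fixed element.
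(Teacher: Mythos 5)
Your proof is correct and takes essentially the same route as the paper's: apply the uniform order boundedness principle (\cref{res:UBP}) to get order boundedness of $\{\,T_n : n\geq 1\,\}$, then verify linearity, order boundedness, and band preservation of the pointwise order limit. The paper compresses these verifications into two sentences; your write-up merely supplies the (correct) details, including the standard reduction of band preservation to the disjointness condition $x\perp y\Rightarrow Tx\perp y$.
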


\begin{proof}
	Using \cref{res:UBP}, it is clear that $T$ is a linear and order bounded operator on $E$. Since each of the $T_n$ is a band preserving operator, the same is true for $T$. Hence $T$ is an orthomorphism on $E$.%Since order convergent sequences are order bounded, \cref{res:UBP} shows that there exist an $S\in\Orth(E)$ such that $\abs{T_n}\leq\abs{S}$ for $n\geq 1$. As $\Orth(E)=\centre(E)$, there exists a $\lambda\geq 0$ such that $\abs{T_n}\leq \lambda I$ for $n\geq 1$. Using \cref{eq:modulus_of_orthormorphism}, one then easily sees that $\abs{Tx}\leq\lambda\abs{x}$ for $x\in E$. Hence $T\in\centre(E)=\Orth(E)$.
\end{proof}

We conclude by giving some estimates for orthomorphisms that will be used in the sequel. As a preparation, we need the following extension of \cite[Exercise~1.3.7]{aliprantis_burkinshaw_POSITIVE_OPERATORS_SPRINGER_REPRINT:2006}.

\begin{lemma}\label{res:projection_inequality}
	Let $E$ be a vector lattice with the principal projection property. Take $x,y\in E$. For  $\lambda\in\RR$, let $P_\lambda$ denote the order projection in $E$ onto the band generated by $(x-\lambda y)^+$. Then $\lambda P_\lambda y\leq P_\lambda x$. When $x,y\in\posE$ and $\lambda\geq 0$, then $x\leq\lambda y+P_\lambda x$.
	\end{lemma}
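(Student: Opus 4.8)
The plan is to compute the action of $P_\lambda$ and of its complementary projection on the single element $z\coloneqq x-\lambda y$, using only that $P_\lambda$ is a positive linear projection and that $z^+$ generates the band onto which $P_\lambda$ projects. (The principal projection property is exactly what guarantees that this band is a projection band, so $P_\lambda$ is well defined; no further use of that hypothesis is needed.)

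First I would observe that $P_\lambda z^+=z^+$, since $z^+$ lies in the band onto which $P_\lambda$ projects, and that $P_\lambda z^-=0$, since $z^-\perp z^+$ forces $z^-$ into the disjoint complement of that band. As $P_\lambda$ is linear and $z=z^+-z^-$, these two facts give $P_\lambda z=z^+$, that is,
\[
P_\lambda x-\lambda P_\lambda y=(x-\lambda y)^+\geq 0,
\]
which is precisely the first assertion $\lambda P_\lambda y\leq P_\lambda x$. Note that this step requires no sign conditions on $x$, $y$, or $\lambda$.

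For the second assertion I would pass to $Q_\lambda\coloneqq \Id-P_\lambda$, the order projection onto the disjoint complement of that band. The same computation gives $Q_\lambda z=z-z^+=-z^-$, hence $Q_\lambda x-\lambda Q_\lambda y=-(x-\lambda y)^-\leq 0$, so $Q_\lambda x\leq\lambda Q_\lambda y$. Now using $x,y\in\posE$ and $\lambda\geq 0$: since $Q_\lambda$ is a positive projection with $0\leq Q_\lambda\leq\Id$, we have $0\leq Q_\lambda y\leq y$, so $\lambda Q_\lambda y\leq\lambda y$, and therefore $Q_\lambda x\leq\lambda y$. Adding $P_\lambda x$ to both sides and invoking $x=P_\lambda x+Q_\lambda x$ yields $x\leq\lambda y+P_\lambda x$.

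I do not anticipate a genuine obstacle here; the only points needing a moment's care are the two elementary facts that an element of a band is fixed by the corresponding order projection while an element disjoint from a generator of that band is annihilated by it, and that order projections are linear, positive, and dominated by $\Id$. Everything else is routine bookkeeping with $z=z^+-z^-$.
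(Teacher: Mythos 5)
Your proof is correct. For the first inequality you argue exactly as the paper does: decompose $z=x-\lambda y$ as $z^+-z^-$, note that $P_\lambda$ fixes $z^+$ and kills $z^-$, and conclude $P_\lambda x-\lambda P_\lambda y=(x-\lambda y)^+\geq 0$ (the paper only records the inequality $0\leq P_\lambda(x-\lambda y)^+$, but the content is the same). For the second inequality your route differs slightly: you pass to the complementary projection $Q_\lambda=\Id-P_\lambda$, compute $Q_\lambda z=-z^-\leq 0$, and use $0\leq Q_\lambda\leq\Id$ together with $x=P_\lambda x+Q_\lambda x$. The paper instead stays with $P_\lambda$ and uses the pointwise estimate $x-\lambda y\leq (x-\lambda y)^+=P_\lambda(x-\lambda y)^+\leq P_\lambda x$, where the last step comes from $(x-\lambda y)^+\leq x^+=x$ and the positivity of $P_\lambda$. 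The two arguments are of the same length and use the same ingredients (linearity and positivity of order projections, the decomposition $z=z^+-z^-$); neither buys anything over the other, and your identification of where the principal projection property enters (only to guarantee that $P_\lambda$ exists) matches the paper's implicit use of it.
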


\begin{proof}
	The first inequality follows from the fact that
	\[
	0\leq P_\lambda (x-\lambda y)^+=P_\lambda (x-\lambda y)=P_\lambda x-\lambda P_\lambda y.
	\]
 For the second inequality, we note that   $x-\lambda y\leq (x-\lambda y)^+=P_\lambda(x-\lambda y)^+$ for all $x$, $y$, and $\lambda$. When $x,y\in\posE$ and $\lambda\geq 0$, then $(x-\lambda y)^+\leq x^+=x$, so that
 \[
 x\leq \lambda y+P_\lambda(x-\lambda y)^+\leq \lambda y + P_\lambda x.
 \]
\end{proof}

\begin{proposition}\label{res:estimates_for_order_projections_in_orthomorphisms}
	\!Let $E$ be a Dedekind complete vector lattice, and let\! $T\!\!\in\!\pos{\Orth(E)}$\!\!. For $\lambda>0$, let $\mathcalalt P_\lambda$ be the order projection in $\Orth(E)$ onto the band generated by $(T-\lambda\Id)^+$ in $\Orth(E)$. There exists a unique order projection $P_\lambda$ in $E$ such that $\mathcalalt P_\lambda(S)=P_\lambda S$ for all $S\in\Orth(E)$. Furthermore:
	\begin{enumerate}
		\item $\lambda P_\lambda\leq P_\lambda T\leq T$;
		\item $T\leq \lambda\Id + P_\lambda T$;
		\item $(P_\lambda T x)\wedge y\leq \frac{1}{\lambda} Ty$ for all $x,y\in\posE$.
	\end{enumerate}
\end{proposition}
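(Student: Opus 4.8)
The plan is to reduce everything to \cref{res:projection_inequality} applied \emph{inside the vector lattice $\Orth(E)$}, once $\mathcalalt P_\lambda$ has been identified with left multiplication by an order projection of $E$. Throughout one uses that $\regops(E)$ is Dedekind complete, hence so is its band $\Orth(E)$, and that $\Orth(E)$ is a commutative unital $f$-algebra whose multiplicative identity $\Id$ is also a weak order unit.

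First I would set up this dictionary. Since $\Orth(E)$ is Dedekind complete, the band in $\Orth(E)$ generated by $(T-\lambda\Id)^+$ is a projection band, so $\mathcalalt P_\lambda$ is a well-defined order projection on $\Orth(E)$; put $P_\lambda\coloneqq\mathcalalt P_\lambda(\Id)$. From $0\le\mathcalalt P_\lambda\le\Id_{\Orth(E)}$ one gets $0\le P_\lambda\le\Id$, and since the ranges of the complementary order projections $\mathcalalt P_\lambda$ and $\Id_{\Orth(E)}-\mathcalalt P_\lambda$ are disjoint bands, $P_\lambda\wedge(\Id-P_\lambda)=0$; the $f$-algebra property then forces $P_\lambda(\Id-P_\lambda)=0$, i.e.\ $P_\lambda^2=P_\lambda$. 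So $P_\lambda$ is a positive idempotent orthomorphism with $0\le P_\lambda\le\Id$; as kernels of orthomorphisms are bands and $\ker(\Id-P_\lambda)=\Range P_\lambda$, one has $E=\Range P_\lambda\oplus\ker P_\lambda$ with both summands bands, whence $P_\lambda$ is the order projection of $E$ onto $\Range P_\lambda$. Next, left multiplication $S\mapsto P_\lambda S$ on $\Orth(E)$ is a positive idempotent operator dominated by the identity operator of $\Orth(E)$ (in the $f$-algebra $\Orth(E)$, $0\le P_\lambda\le\Id$ gives $0\le P_\lambda S\le S$ for $S\ge 0$, and $P_\lambda^2=P_\lambda$ gives idempotency), hence is itself an order projection on $\Orth(E)$, agreeing with $\mathcalalt P_\lambda$ at $\Id$. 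Since the range of any order projection on $\Orth(E)$ is the band generated by its value at the weak order unit $\Id$, the two order projections have the same range and therefore coincide: $\mathcalalt P_\lambda(S)=P_\lambda S$ for every $S\in\Orth(E)$. Uniqueness of $P_\lambda$ with this property is immediate on taking $S=\Id$.

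Parts (1) and (2) are then read off from \cref{res:projection_inequality}, which applies because $\Orth(E)$ has the projection property, taken with $x=T$ and $y=\Id$; the order projection of $\Orth(E)$ onto the band generated by $(T-\lambda\Id)^+$ occurring there is precisely $\mathcalalt P_\lambda$. Its first inequality reads $\lambda\mathcalalt P_\lambda(\Id)\le\mathcalalt P_\lambda(T)$, i.e.\ $\lambda P_\lambda\le P_\lambda T$, which together with $P_\lambda T=\mathcalalt P_\lambda(T)\le T$ (as $\mathcalalt P_\lambda$ is an order projection and $T\ge 0$) gives (1); since $T,\Id\in\pos{\Orth(E)}$ and $\lambda>0$, its second inequality reads $T\le\lambda\Id+\mathcalalt P_\lambda(T)=\lambda\Id+P_\lambda T$, which is (2). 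For (3), fix $x,y\in\posE$ and set $z\coloneqq(P_\lambda Tx)\wedge y$; since $0\le z\le P_\lambda Tx=P_\lambda(Tx)\in\Range P_\lambda$ and $\Range P_\lambda$ (being a band) is an ideal, one has $z\in\Range P_\lambda$, so $z=P_\lambda z\le P_\lambda y$, and by (1) $\lambda P_\lambda y\le P_\lambda Ty\le Ty$ (the last step because $P_\lambda$ is an order projection of $E$ and $Ty\ge 0$), whence $\lambda z\le Ty$.

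The main obstacle is the identification in the second paragraph: verifying that the band projection $\mathcalalt P_\lambda$ of $\Orth(E)$ is nothing but left multiplication by an honest order projection $P_\lambda$ of $E$, for which the $f$-algebra structure of $\Orth(E)$ and the fact that $\Id$ is a weak order unit are the essential ingredients. Once this is in hand, parts (1) and (2) are merely \cref{res:projection_inequality} transported into $\Orth(E)$, and part (3) is a short lattice computation.
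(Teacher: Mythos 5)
Your proof is correct and follows essentially the same route as the paper's: identify $\mathcalalt P_\lambda$ with left multiplication by an order projection $P_\lambda$ of $E$, read off parts (1) and (2) from \cref{res:projection_inequality} applied in the Dedekind complete vector lattice $\Orth(E)$ with $x=T$ and $y=\Id$, and obtain (3) by observing that $(P_\lambda Tx)\wedge y$ lies in the range band of $P_\lambda$, so that it is dominated by $P_\lambda y\leq\frac{1}{\lambda}Ty$. The only difference is that the paper obtains the representation $\mathcalalt P_\lambda(S)=P_\lambda S$ in one stroke by citing the standard fact that every operator between $0$ and the identity on $\Orth(E)$ is given by multiplication by an element of $\Orth(E)$ (Aliprantis--Burkinshaw, Theorem~2.62), whereas you re-derive this special case by hand from the $f$-algebra structure of $\Orth(E)$; both are fine.
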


\begin{proof}
Since $0\leq \mathcalalt P_\lambda\leq \Id_{\Orth(E)}$, it follows from \cite[Theorem~2.62]{aliprantis_burkinshaw_POSITIVE_OPERATORS_SPRINGER_REPRINT:2006} that there exists a unique $P_\lambda\in\Orth(E)$ with $0\leq P_\lambda\leq \Id$ such that $\mathcalalt P_\lambda (S)=P_\lambda S$ for all $S\in\Orth(E)$. The fact that $\mathcalalt P_\lambda$ is idempotent implies that $P_\lambda$ is also idempotent. Hence $P_\lambda$ is an order projection.

The inequalities in the parts~(1) and~(2) are then a consequence of those in \cref{res:projection_inequality}. For part~(3), we note that $(P_\lambda T x)\wedge y$ is in the image of the projection $P_\lambda$. Since order projections are vector lattice homomorphisms, we have, using part~(1) in the final step, that
\[
 (P_\lambda T x)\wedge y=P_\lambda((P_\lambda T x)\wedge y)=(P_\lambda^2 Tx)\wedge P_\lambda y\leq P_\lambda y\leq \frac{1}{\lambda} Ty.
\]
\end{proof}

We shall have use for the following corollary, which has some appeal of its own.

\begin{corollary}\label{res:interesting_ineq}Let $E$ be a Dedekind complete vector lattice, and let $T\in\pos{\Orth(E)}$. Then
\[(Tx) \wedge y\leq \lambda(x\wedge y)+\frac{1}{\lambda}Ty\]
for all $x,y\in E^+$ and $\lambda>0$.
\end{corollary}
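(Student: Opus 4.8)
The plan is to derive the inequality from \cref{res:estimates_for_order_projections_in_orthomorphisms} by splitting $x$ according to the order projection $P_\lambda$ attached to $(T-\lambda\Id)^+$. Fix $x,y\in\posE$ and $\lambda>0$, and write $P=P_\lambda$ for brevity. Since $P$ is an order projection, $x=Px+(\Id-P)x$ with both summands in $\posE$, and likewise $Tx=TPx+T(\Id-P)x$ because $T$ is positive and linear; moreover $TPx=PTx$ since $P$ commutes with $T$ (it lies in the $f$-algebra $\Orth(E)$, which is commutative, or directly: $P$ is the component of the orthomorphism onto a band). The key point is to estimate $(Tx)\wedge y$ by handling the two pieces separately.

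First I would bound the ``$(\Id-P)x$'' part. On the complementary band, part~(2) of \cref{res:estimates_for_order_projections_in_orthomorphisms} gives $T\leq\lambda\Id+PT$, so applying this to $(\Id-P)x\in\posE$ and using $P(\Id-P)x=0$ together with $PT=TP$ yields
\[
T(\Id-P)x\leq \lambda(\Id-P)x+PT(\Id-P)x=\lambda(\Id-P)x.
\]
Hence $\bigl(T(\Id-P)x\bigr)\wedge y\leq \lambda(\Id-P)x\wedge y\leq \lambda(x\wedge y)$, the last step because $(\Id-P)x\leq x$. Next I would bound the ``$Px$'' part using part~(3) of \cref{res:estimates_for_order_projections_in_orthomorphisms}, which says exactly $(PTx)\wedge y\leq \tfrac1\lambda Ty$ for all $x,y\in\posE$; here $TPx=PTx$, so $\bigl(TPx\bigr)\wedge y\leq\tfrac1\lambda Ty$.

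Finally I would combine the two estimates. Using $Tx=TPx+T(\Id-P)x$ and the subadditivity-type inequality $(a+b)\wedge y\leq (a\wedge y)+(b\wedge y)$ valid for $a,b,y\in\posE$ in any vector lattice, we get
\[
(Tx)\wedge y\leq \bigl(TPx\wedge y\bigr)+\bigl(T(\Id-P)x\wedge y\bigr)\leq \tfrac1\lambda Ty+\lambda(x\wedge y),
\]
which is the claimed inequality. The statement is for $x,y\in E^+$, so no extension to non-positive elements is needed; everything above stays within the positive cone.

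The only mild subtlety, and the step I would be most careful with, is the commutation $PTx=TPx$ together with the reduction to the positive cone: one must make sure $P$ (the order projection in $E$ furnished by \cref{res:estimates_for_order_projections_in_orthomorphisms}) genuinely interacts with $T$ as claimed. This is immediate from that proposition, since $\mathcalalt P_\lambda(T)=P_\lambda T$ and $\mathcalalt P_\lambda$ is the band projection onto a band in the commutative $f$-algebra $\Orth(E)$; alternatively, $P_\lambda$ is itself an orthomorphism ($0\le P_\lambda\le\Id$) and orthomorphisms commute. The elementary lattice inequality $(a+b)\wedge y\le(a\wedge y)+(b\wedge y)$ is standard and needs no comment. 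All remaining manipulations are the routine monotonicity facts $(\Id-P)x\le x$ and positivity of $T$.
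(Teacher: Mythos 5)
Your route is essentially the paper's own: both rest on parts~(2) and~(3) of \cref{res:estimates_for_order_projections_in_orthomorphisms}, and your splitting of $x$ into $P_\lambda x+(\Id-P_\lambda)x$ is only a cosmetic variant of applying part~(2) to $x$ directly. All of your steps are fine except one, and that one is a genuine gap. From $T(\Id-P)x\leq\lambda(\Id-P)x\leq\lambda x$ you may conclude $\bigl(T(\Id-P)x\bigr)\wedge y\leq(\lambda x)\wedge y$, but the further inequality $(\lambda x)\wedge y\leq\lambda(x\wedge y)$ that you then need is \emph{not} a routine monotonicity fact: dividing by $\lambda$, it amounts to $x\wedge(y/\lambda)\leq x\wedge y$, which holds for all $x$ only when $y/\lambda\leq y$, i.e.\ when $\lambda\geq1$. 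For $0<\lambda<1$ it fails, and in fact the corollary itself fails there: take $E=\RR$, $T=\tfrac18\Id$, $x=7$, $y=1$, $\lambda=\tfrac12$; then $(Tx)\wedge y=\tfrac78$, while $\lambda(x\wedge y)+\tfrac1\lambda Ty=\tfrac12+\tfrac14=\tfrac34$. So no argument can close this particular gap without restricting $\lambda$.

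This is not a slip unique to you: the paper's own proof commits exactly the same error in its middle line, where $(\lambda x+P_\lambda Tx)\wedge y$ is bounded by $\lambda(x\wedge y)+(P_\lambda Tx)\wedge y$. What both arguments actually establish, for every $\lambda>0$, is the weaker (and correct) inequality $(Tx)\wedge y\leq(\lambda x)\wedge y+\tfrac1\lambda Ty\leq\lambda x+\tfrac1\lambda Ty$; for $\lambda\geq1$ they do prove the corollary as stated. This is harmless for the rest of the paper, because the corollary is invoked only in the first step of the display in the proof of \cref{res:tauctn}, where $\lambda=n_0\geq1$ and where the term $n_0(x_\alpha\wedge e)$ is immediately weakened to $n_0x_\alpha$ anyway. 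But as a proof of the statement as written, for all $\lambda>0$, yours (like the paper's) does not go through, and the statement should be amended, e.g.\ by assuming $\lambda\ge1$ or by replacing $\lambda(x\wedge y)$ with $(\lambda x)\wedge y$.
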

\begin{proof}For $\lambda>0$, we let $\mathcalalt P_\lambda$ be the order projection in $\Orth(E)$ onto the band generated by $(T-\lambda\Id)^+$ in $\Orth(E)$. According to \cref{res:estimates_for_order_projections_in_orthomorphisms}, there exists a unique order projection $P_\lambda$ in $E$ such that $\mathcalalt P_\lambda(S)=P_\lambda S$ for all $S\in\Orth(E)$. By applying part~(2) of \cref{res:estimates_for_order_projections_in_orthomorphisms} in the first step and its part~(3) in the third, we have, for $x,y\in\pos{E}$,
\begin{align*}
    (Tx) \wedge y &\leq (\lambda x+P_\lambda T x)\wedge y\\
& \leq \lambda(x\wedge y)+P_\lambda T x \wedge y\\
& \leq \lambda(x\wedge y)+ \frac{1}{\lambda}Ty.
\end{align*}
\end{proof}

\section{Continuity properties of orthomorphisms}\label{sec:continuity_of_orthomorphisms}

\noindent Orthomorphisms preserve order convergence of nets. In this short section, we show that they also preserve unbounded order convergence and\textemdash when applicable\textemdash convergence in the Hausdorff uo-Lebesgue topology.

Before doing so, let us note that this is in contrast to the case of general order bounded operators. Surely, there exist order bounded operators that are not order continuous. For the remaining two  convergence structures, we consider $\ell_1$ with its standard basis $\seq{e_n}{n}$. It follows from \cite[Corollary~3.6]{gao_troitsky_xanthos:2017} that $e_n\uoconv 0$. There are several ways to see that $\ell_1$ admits a \necun\ Hausdorff \uoLtop\ $\uoLt_{\ell_1}$. This follows from the fact that its norm is order continuous (see \cite[p.~993]{taylor:2019}), from the fact that it is atomic (see \cite[Lemma~7.4]{taylor:2019}), and from a result in the context of measure spaces (see \cite[Theorem~6.3]{deng_de_jeu_UNPUBLISHED:2020a}). The latter two results also show that $\uoLt_{\ell_1}$ is the topology of coordinatewise convergence. In particular, $e_n\conv{\uoLt_{\ell_1}}0$ which is, of course, also a consequence of the fact that $e_n\uoconv 0$.  Define $T:\ell_1\to\ell_1$ by setting $Tx\coloneqq \left(\sum_{n=1}^\infty x_n\right)e_1$ for $x=\sum_{n=1}^\infty x_ne_n\in\ell_1$. Since $Te_n=e_1$ for all $n\geq 1$, the order continuous positive operator $T$ on $\ell_1$ preserves neither uo-convergence nor $\uoLt_{\ell_1}$-convergence of sequences in $\ell_1$.

\begin{proposition}\label{res:uocnt}
	Let $E$ be a Dedekind complete vector lattice, and let $T \in \Orth(E)$. Suppose that $\net$ is a net in $E$ such that $x_\alpha\uoconv 0$ in $E$. Then $Tx_\alpha\uoconv 0$ in $E$.
\end{proposition}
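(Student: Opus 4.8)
The plan is to reduce the statement about unbounded order convergence to the already-known fact that orthomorphisms are order continuous, i.e.\ that they preserve order convergence of nets. The key observation is the identity $\abs{Tx}=\abs{T}\abs{x}$ from \cref{eq:modulus_of_orthormorphism}, which lets me estimate $\abs{Tx_\alpha}\wedge y$ in terms of data about $\net$ for each fixed $y\in\posE$.

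First I would fix $y\in\posE$ and, without loss of generality, work with $\abs{T}$ in place of $T$ (replacing $T$ by $\abs{T}\in\pos{\Orth(E)}$ only enlarges $\abs{Tx_\alpha}$, since $\abs{Tx_\alpha}=\abs{T}\abs{x_\alpha}$). So it suffices to show that $(\abs{T}\abs{x_\alpha})\wedge y\oconv 0$ in $E$ for every $y\in\posE$, given that $\abs{x_\alpha}\wedge z\oconv 0$ for every $z\in\posE$. The natural tool is \cref{res:interesting_ineq}: applied with $x=\abs{x_\alpha}$, it yields, for every $\lambda>0$,
\[
(\abs{T}\abs{x_\alpha})\wedge y\leq \lambda\bigl(\abs{x_\alpha}\wedge y\bigr)+\frac{1}{\lambda}\,Ty.
\]
Here $\abs{x_\alpha}\wedge y\oconv 0$ by hypothesis (taking $z=y$), so the first term tends to $0$ in order for each fixed $\lambda$; the second term is a fixed element of $\posE$ that can be made arbitrarily small by choosing $\lambda$ large. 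The remaining work is to turn this ``$\varepsilon$--$\lambda$'' estimate into a genuine order-convergence statement.

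To do this cleanly I would argue as follows. Let $\netgen{w_\gamma}{\gamma\in\is}$ be a net with $w_\gamma\downarrow 0$ dominating $\abs{x_\alpha}\wedge y$ in the sense of the definition of order convergence. For each $\lambda>0$, set $u_\lambda\coloneqq (Ty)/\lambda\in\posE$; then $u_\lambda\downarrow 0$ as $\lambda\uparrow\infty$ (running $\lambda$ over $\NN$ suffices, so this is a decreasing sequence with infimum $0$, using that $E$ is Archimedean). Combining the two, the doubly-indexed net $\lambda w_\gamma + u_\lambda$, suitably indexed over pairs, decreases to $0$, and the displayed inequality shows it eventually dominates $(\abs{T}\abs{x_\alpha})\wedge y$ along the appropriate tails. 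Hence $(\abs{T}\abs{x_\alpha})\wedge y\oconv 0$. Since $\abs{Tx_\alpha}\wedge y = (\abs{T}\abs{x_\alpha})\wedge y$ and $y\in\posE$ was arbitrary, this is precisely $Tx_\alpha\uoconv 0$.

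The main obstacle is purely bookkeeping: organising the two-parameter family $\{\lambda w_\gamma + u_\lambda\}$ into a single net decreasing to $0$ that dominates the tails of $(\abs{T}\abs{x_\alpha})\wedge y$ in the exact form required by the definition of order convergence (recall the index sets need not coincide, as emphasised in \cref{sec:preliminaries}). One convenient way around this is to invoke a standard characterisation: in an Archimedean vector lattice, $z_\alpha\oconv 0$ iff there is a single net $v_\delta\downarrow 0$ with the property that for each $\delta$ there is $\alpha_0$ with $\abs{z_\alpha}\leq v_\delta$ for $\alpha\geq\alpha_0$, and one checks directly from \cref{res:interesting_ineq} that $\inf_{\lambda\in\NN}\bigl(\limsup\text{-type bound}\bigr)=0$. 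Everything else — the reduction to $\abs{T}$, the use of \cref{eq:modulus_of_orthormorphism}, and the application of \cref{res:interesting_ineq} — is routine.
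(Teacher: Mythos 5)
Your proof is correct, but it follows a genuinely different route from the paper's. The paper exploits that a positive orthomorphism is a lattice homomorphism: for $y=Tx\in\pos{T(E)}$ it writes $Tx_\alpha\wedge y=T(x_\alpha\wedge x)\oconv 0$ by order continuity, and then upgrades this to uo-convergence against \emph{all} of $E$ via the band decomposition $E=B_{T(E)}\oplus\bigl(B_{T(E)}\bigr)^{\dc}$ and the cited result that uo-convergence in a band may be tested against a generating set. You instead run the quantitative estimate $(Tx)\wedge y\leq \lambda(x\wedge y)+\tfrac{1}{\lambda}Ty$ of \cref{res:interesting_ineq} directly against an arbitrary $y\in\posE$, which bypasses the band-generation step entirely; this is in fact the same strategy the paper uses for the topological analogue \cref{res:tauctn}, so your argument makes the order-convergence and topological cases uniform, at the price of invoking the order-projection machinery behind \cref{res:interesting_ineq} where the paper's proof needs only soft facts about orthomorphisms. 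One caveat on your bookkeeping: the two-parameter family $\lambda w_\gamma+u_\lambda$ does \emph{not} decrease in the product order (for fixed $\gamma$ the term $\lambda w_\gamma$ grows with $\lambda$), so the "single decreasing net" packaging as first stated does not work; but the $\limsup$ route you fall back on is airtight: since $0\leq(\abs{T}\abs{x_\alpha})\wedge y\leq y$ and $E$ is Dedekind complete, $p_\alpha\coloneqq\sup_{\beta\geq\alpha}\bigl((\abs{T}\abs{x_\beta})\wedge y\bigr)$ exists and decreases to some $u\geq 0$; your inequality gives $u\leq\lambda w_{\gamma_0}+\tfrac{1}{\lambda}\abs{T}y$ for all $\gamma_0$ and $\lambda$, whence $u\leq\tfrac{1}{\lambda}\abs{T}y$ for all $\lambda$ and so $u=0$ by the Archimedean property, and $(p_\alpha)$ itself then serves as the dominating net in the definition of order convergence.
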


\begin{proof} Using \cref{eq:modulus_of_orthormorphism}, one easily sees that we may suppose that $T$ and the $x_\alpha$'s are positive. Let $B_{T(E)}$ denote the band in $E$ that is generated by $T(E)$. Take a $y\in \pos{T(E)}$. Since a positive orthomorphism is a lattice homomorphism, there exists an $x\in\pos{E}$ such that $y=Tx$. Using the fact that $x_\alpha\uoconv 0$ in $E$, the order continuity of $T$ then implies that	
\[
Tx_\alpha\wedge y=Tx_\alpha\wedge Tx=T(x_\alpha\wedge x)\oconv 0
\]
in $E$. Then \cite[Corollary~2.12]{gao_troitsky_xanthos:2017} shows that also $Tx_\alpha\wedge y\oconv 0$ in the regular vector sublattice $B_{T(E)}$ of $E$. Since $B_{T(E)}$ also equals the band in $B_{T(E)}$ that is generated by $T(E)$, an appeal to \cite[Lemma~2.2]{li_chen:2018} yields that $Tx_\alpha\uoconv 0$ in $B_{T(E)}$. Hence $Tx_\alpha\wedge \abs{y}\oconv 0$ in $B_{T(E)}$ for all $y\in B_{T(E)}$, and then also $Tx_\alpha\wedge \abs{y}\oconv 0$ in $E$ for all $y\in B_{T(E)}$. Since $E=B_{T(E)}\oplus \big(B_{T(E)}\big)^\dc$, it is now clear that 
$Tx_\alpha\wedge \abs{y}\oconv 0$ in $E$ for all $y\in E$.
\end{proof} 

For the case of a Hausdorff \uoLtop, we need the following preparatory result that has some independent interest. \cref{res:oLt_preparation} is of the same flavour.

\begin{proposition}\label{res:tauctn}
Let $E$ be a Dedekind complete vector lattice that admits a \uppars{not necessarily Hausdorff} locally solid linear topology $\tau_E$, and let $T \in \Orth(E)$. Suppose that $\net$ is a net in $E$ such that $x_\alpha\conv{\tau_E} 0$ in $E$. Then $Tx_\alpha\conv{\unb_E\tau_E} 0$ in $E$.	
	\end{proposition}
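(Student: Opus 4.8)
The plan is to reduce to the positive case and then exploit the explicit description of the unbounded topology $\unb_E\tau_E$ from \cref{res:topology_on_order_bounded_operators} (more precisely, the convergence criterion for $\unb_S\tau_F$ recalled in \cref{sec:preliminaries}): a net $y_\alpha$ satisfies $y_\alpha\conv{\unb_E\tau_E}0$ exactly when $\abs{y_\alpha}\wedge\abs{z}\conv{\tau_E}0$ for every $z\in E$. So it suffices to fix $z\in\posE$ and show $\abs{Tx_\alpha}\wedge z\conv{\tau_E}0$. Using \cref{eq:modulus_of_orthormorphism} to write $\abs{Tx_\alpha}=\abs{T}\abs{x_\alpha}$, we may replace $T$ by $\abs{T}$ and $x_\alpha$ by $\abs{x_\alpha}$, so assume $T\in\pos{\Orth(E)}$ and $x_\alpha\in\posE$ with $x_\alpha\conv{\tau_E}0$; we must prove $Tx_\alpha\wedge z\conv{\tau_E}0$.

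The key estimate is \cref{res:interesting_ineq}: for every $\lambda>0$ and all $x,y\in\posE$,
\[
(Tx_\alpha)\wedge z\leq \lambda(x_\alpha\wedge z)+\frac{1}{\lambda}Tz.
\]
The second term is a fixed vector independent of $\alpha$, and the first term is $\tau_E$-null for each fixed $\lambda$ since $x_\alpha\wedge z\leq x_\alpha\conv{\tau_E}0$ and $\tau_E$ is a linear topology. The plan is now a standard $\varepsilon$-of-$\lambda$ argument at the level of solid neighbourhoods: given a solid $\tau_E$-neighbourhood $U$ of $0$, pick a solid $V$ with $V+V\subseteq U$, then choose $\lambda$ large enough that $\frac{1}{\lambda}Tz\in V$ (possible because $\varepsilon w\conv{\tau_E}0$ as $\varepsilon\to 0$ for any fixed $w$, by continuity of scalar multiplication), and then choose $\alpha_0$ so that $\lambda(x_\alpha\wedge z)\in V$ for $\alpha\geq\alpha_0$. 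For such $\alpha$ the right-hand side of the displayed inequality lies in $V+V\subseteq U$, and since $0\leq (Tx_\alpha)\wedge z$ is dominated by it and $U$ is solid, $(Tx_\alpha)\wedge z\in U$. Hence $Tx_\alpha\wedge z\conv{\tau_E}0$, and as $z\in\posE$ was arbitrary (and a general $z\in E$ is handled via $\abs{z}$), we conclude $Tx_\alpha\conv{\unb_E\tau_E}0$.

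The only mildly delicate point is bookkeeping the order of quantifiers: $\lambda$ must be chosen \emph{before} $\alpha_0$, which is fine because the ``tail'' term $\frac{1}{\lambda}Tz$ does not depend on $\alpha$. I do not anticipate a real obstacle here; the substance of the argument is entirely packaged in \cref{res:interesting_ineq}, and everything else is the routine manipulation of solid neighbourhood bases that is used repeatedly elsewhere in the paper. One should remember to invoke the reduction to positive $T$ and positive $x_\alpha$ via \cref{eq:modulus_of_orthormorphism} at the very start, since $\abs{Tx_\alpha}=\abs{T}\abs{x_\alpha}$ is what makes both reductions legitimate simultaneously.
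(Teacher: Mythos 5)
Your proposal is correct and follows essentially the same route as the paper's proof: reduce to positive $T$ and positive $x_\alpha$ via \cref{eq:modulus_of_orthormorphism}, apply the estimate $(Tx_\alpha)\wedge z\leq \lambda(x_\alpha\wedge z)+\frac{1}{\lambda}Tz$ from \cref{res:interesting_ineq}, and run the two-neighbourhood argument with $\lambda$ fixed before $\alpha_0$. The only cosmetic difference is that the paper bounds $\lambda(x_\alpha\wedge z)$ by $\lambda x_\alpha$ and picks $\alpha_0$ for the latter, whereas you pick $\alpha_0$ for the wedge term directly; both are justified by local solidness.
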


\begin{proof} As in the proof of \cref{res:uocnt}, we may suppose that $T$ and the $x_\alpha$ are positive. For $n\geq 1$, we let $\mathcalalt P_n$ be the order projection in $\Orth(E)$ onto the band generated by $(T-n\Id)^+$ in $\Orth(E)$ again, so that again there exists a unique order projection $P_n$ in $E$ such that $\mathcalalt P_n(S)=P_nS$ for all $S\in\Orth(E)$.  Fix $e\in E^+$. Take a solid $\tau_E$-neighbourhood $U$ of $0$ in $E$, and choose a $\tau_E$-neighbourhood $V$ of $0$ such that $V+V\subseteq U$. Take an $n_0\geq 1$ such that $T e/n_0\in V$. As $x_\alpha\conv{\tau_E} 0$, there exists an $\alpha_0\in\is$ such that $n_0x_\alpha\in V$ for all $\alpha\geq\alpha_0$. By applying \cref{res:interesting_ineq} in the first step, we have, for all $\alpha\geq\alpha_0$, 
\begin{equation}\label{eq:estimates_for_tau_continuity_of_orthomorphisms}
\begin{split}
(Tx_\alpha) \wedge e & \leq n_0(x_\alpha\wedge e)+ \frac{1}{n_0}Te\\
& \leq n_0x_\alpha + \frac{1}{n_0}T e\\
&\in V+V\subseteq U
\end{split}
\end{equation}
The solidness of $V$ then implies that $(Tx_\alpha) \wedge e\in U$ for all $\alpha\geq \alpha_0$. Since $U$ and $e$ were arbitrary, we conclude that $T_\alpha x \conv{\unb_E\tau_E} 0$.
\end{proof}

Since the unbounded topology $\unb_E\uoLt_E$ that is generated by a Hausdorff \uoLtop\ $\uoLt_E$ equals $\uoLt_E$ again, the following is now clear.

\begin{corollary}\label{res:uoLtctn}
	Let $E$ be a Dedekind complete vector lattice that admits a \necun\ Hausdorff \uoLtop\ $\uoLt_E$, and let $T \in \Orth(E)$. Suppose that $\net$ is a net in $E$ such that $x_\alpha\conv{\uoLt_E} 0$ in $E$. Then $Tx_\alpha\conv{\uoLt_E} 0$ in $E$.	
\end{corollary}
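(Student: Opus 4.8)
The plan is to deduce this immediately from \cref{res:tauctn}. By definition, a \uoLtop\ is in particular a (here Hausdorff, hence certainly "not necessarily Hausdorff") locally solid linear topology on $E$, so $\uoLt_E$ meets the hypotheses of \cref{res:tauctn}. Applying that proposition with $\tau_E\coloneqq\uoLt_E$ to a net $\net$ in $E$ satisfying $x_\alpha\conv{\uoLt_E}0$ then yields at once that $Tx_\alpha\conv{\unb_E\uoLt_E}0$ in $E$.

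It remains only to identify the target topology. For this I would invoke the fact recalled in \cref{sec:preliminaries} (due to \cite{conradie:2005} and \cite{taylor:2019}): since a Hausdorff \uoLtop\ is unique when it exists, one has $\unb_E\uoLt_E=\uoLt_E$. Substituting this identity into the convergence obtained in the previous paragraph gives $Tx_\alpha\conv{\uoLt_E}0$, which is the assertion.

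There is essentially no obstacle; the corollary is a two-line combination of \cref{res:tauctn} with the self-reproducing property $\unb_E\uoLt_E=\uoLt_E$ of the Hausdorff \uoLtop. The only point deserving a moment's attention is the verification that $\uoLt_E$ legitimately plays the role of $\tau_E$ in \cref{res:tauctn}, which is immediate from the definition of a \uoLtop; all the genuine work has already been carried out in \cref{res:interesting_ineq} and \cref{res:tauctn}.
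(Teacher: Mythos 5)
Your proposal is correct and coincides with the paper's own argument: the corollary is derived by applying \cref{res:tauctn} with $\tau_E=\uoLt_E$ and then invoking the identity $\unb_E\uoLt_E=\uoLt_E$, exactly as you do. Nothing is missing.
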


\section{Topologies on $\Orth(E)$}\label{sec:topologies_on_the_orthomorphisms_of_a_vector_lattice}

\noindent Let $E$ be a Dedekind complete vector lattice, and suppose that $\tau_E$ is a (not necessarily Hausdorff) locally solid additive topology on $E$. Take a non-empty subset $S$ of $E$. According to \cref{res:topology_on_order_bounded_operators}, there exists a unique additive topology $\optop{S}\tau_E$ on $\regops(E)$ such that, for a net $\opnet$ in $\regops(E)$, $T_\alpha\conv{\optop{S}\tau_E} 0$ if and only if $\abs{T_\alpha}\abs{s}\conv{\tau_E} 0$ for all $s\in S$. When $\opnet\subseteq\Orth(E)$, \cref{eq:modulus_of_orthormorphism} and the local solidness of $\tau_E$ imply that this convergence criterion is also equivalent to the one that $T_\alpha s\conv{\tau_E}0$ for all $s\in S$. Hence \emph{on subsets of $\Orth(E)$, an absolute strong operator topology that is generated by a locally solid additive topology on $E$ coincides with the corresponding strong operator topology}. In order to remind ourselves of the connection with the topology on the enveloping vector lattice $\regops(E)$ of $\Orth(E)$, we shall keep writing $\optop{S}\tau_F$ when considering the restriction of this topology to subsets of $\Orth(E)$, rather than switch to, e.g., $\mathrm{SOT}_S\tau_F$.

The above observation can be used in several results in \cref{sec:topologies_on_vector_lattices_of_order_bounded_operators}. For the ease of reference, we include the following consequence of \cref{res:topology_on_order_bounded_operators_practical}.

\begin{corollary}\label{res:topology_on_orthomorphism_practical}
	Let $E$ be a Dedekind complete vector lattice, and let $\tau_E$ be a \uppars{not necessarily Hausdorff} locally solid linear topology on $E$. Take a vector sublattice  $\oplat$ of $\Orth(E)$ and a non-empty subset $S$ of $E$.
	
	There exists a unique locally solid linear topology $\optop{S}\tau_E$ on $\oplat$ such that, for a net $\opnet$ in $\oplat$, $T_\alpha\conv{\optop{S}\tau_E}0$ if and only if $T_\alpha s\conv{\tau_E}0$ for all $s\in S$.
	
	Let $I_S$ be the ideal of $E$ that is generated by $S$.
	For a net $\opnet$ in $\oplat$, $T_\alpha\!\conv{\optop{S}\tau_E}0$ if and only if $T_\alpha x\conv{\tau_E}0$ for all $x\in I_S$.
	
	When $\tau_E$ is a Hausdorff topology on $E$, then $\optop{S}\tau_E$ is a Hausdorff topology on $\oplat$ if and only if $I_S$ separates the points of $\oplat$.\label{part:topology_on_orthomorphisms_practical_2}

\end{corollary}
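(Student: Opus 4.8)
The plan is to deduce this directly from \cref{res:topology_on_order_bounded_operators_practical} applied with $F=E$, together with the identity \eqref{eq:modulus_of_orthormorphism} for orthomorphisms and the local solidness of $\tau_E$. Since $E$ is Dedekind complete, $\Orth(E)$ is a vector sublattice (in fact a band) of $\regops(E)$, so the given vector sublattice $\oplat$ of $\Orth(E)$ is also a vector sublattice of $\regops(E)=\regops(E,E)$. Hence \cref{res:topology_on_order_bounded_operators_practical}, applied with $F=E$ and with this $\oplat$ and $S$, already supplies a unique additive topology $\optop{S}\tau_E$ on $\oplat$ with the convergence criterion that $T_\alpha\conv{\optop{S}\tau_E}0$ if and only if $\abs{T_\alpha}\abs{s}\conv{\tau_E}0$ for all $s\in S$ (and the analogous characterisation over $I_S$), which by part~(2) of that corollary is in fact a locally solid linear topology on $\oplat$, and which, when $\tau_E$ is Hausdorff, is Hausdorff precisely when $I_S$ separates the points of $\oplat$.

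Next I would translate these criteria into the form asserted here. For $T\in\Orth(E)$ and $x\in E$, \eqref{eq:modulus_of_orthormorphism} gives $\abs{T}\abs{x}=\abs{Tx}$; since $\tau_E$ is locally solid, $\abs{T_\alpha}\abs{s}\conv{\tau_E}0$ is equivalent to $\abs{T_\alpha s}\conv{\tau_E}0$, and the latter is equivalent to $T_\alpha s\conv{\tau_E}0$ (again by local solidness, a net converges to $0$ in $\tau_E$ if and only if its modulus does). Running this equivalence over all $s\in S$, respectively over all $x\in I_S$, converts the criteria coming from \cref{res:topology_on_order_bounded_operators_practical} into the ones stated in the corollary. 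Uniqueness of a locally solid linear topology satisfying the new criterion then follows from the uniqueness part of \cref{res:topology_on_order_bounded_operators_practical}, since the two criteria single out the same $\optop{S}\tau_E$-convergent nets.

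There is no genuine obstacle here: the assertions about local solidness, linearity, and Hausdorffness are inherited verbatim from \cref{res:topology_on_order_bounded_operators_practical}, and the only reformulation needed is the passage from $\abs{T_\alpha}\abs{s}$ to $T_\alpha s$. The two points to be mildly careful about are (i) noting that a vector sublattice of $\Orth(E)$ is a vector sublattice of $\regops(E)$, so that \cref{res:topology_on_order_bounded_operators_practical} literally applies, and (ii) that the step $\abs{T}\abs{x}=\abs{Tx}$ uses \eqref{eq:modulus_of_orthormorphism} and hence requires $T\in\Orth(E)$; this is exactly why the simplification is available on $\oplat\subseteq\Orth(E)$ but not for a general regular vector sublattice of $\regops(E)$.
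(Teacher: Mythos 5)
Your proposal is correct and is essentially the paper's own argument: the corollary is deduced from \cref{res:topology_on_order_bounded_operators_practical} with $F=E$, using \cref{eq:modulus_of_orthormorphism} and the local solidness of $\tau_E$ to replace the criterion $\abs{T_\alpha}\abs{s}\conv{\tau_E}0$ by $T_\alpha s\conv{\tau_E}0$. Nothing further is needed.
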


According to the next result, there is an intimate relation between the existence of Hausdorff  \oLtops\ and \uoLtops\ on $E$ and on $\Orth(E)$.

\begin{proposition}\label{res:oLtop_and_uoLtop_on_E_and_Orth(E)}
	Let $E$ be a Dedekind complete vector lattice. The following are equivalent:
	\begin{enumerate}
		\item $E$ admits a Hausdorff \oLtop;
		\item $\Orth(E)$ admits a Hausdorff \oLtop;
		\item $E$ admits a \necun\ Hausdorff \uoLtop;
		\item $\Orth(E)$ admits a \necun\ Hausdorff \uoLtop.
	\end{enumerate}
\end{proposition}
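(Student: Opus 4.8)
The plan is to establish the cycle $(1)\Rightarrow(4)\Rightarrow(2)\Rightarrow(3)\Rightarrow(1)$. Two of these require nothing: $(4)\Rightarrow(2)$ and $(3)\Rightarrow(1)$ hold because every \uoLtop\ is, by definition, an \oLtop. So the content lies in $(1)\Rightarrow(4)$ and $(2)\Rightarrow(3)$.

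For $(1)\Rightarrow(4)$, assume $E$ admits a Hausdorff \oLtop. Since $E$ is Dedekind complete, $\Orth(E)$ is a band in $\regops(E)$, hence an ideal, hence a regular vector sublattice of $\regops(E)$. Applying \cref{res:uoLtop_on_regular_sublattices_of_the_order_bounded_operators} with $F\coloneqq E$ and $\oplat\coloneqq\Orth(E)$ then shows immediately that $\Orth(E)$ admits a \necun\ Hausdorff \uoLtop, which is $(4)$.

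For $(2)\Rightarrow(3)$, assume $\Orth(E)$ admits a Hausdorff \oLtop\ $\tau$. First, $\Orth(E)$ then also admits a Hausdorff \uoLtop: the unbounded topology $\unb_{\Orth(E)}\tau$ generated on $\Orth(E)$ by $\tau$ via $\Orth(E)$ itself is Hausdorff by \cite[Proposition~4.1]{deng_de_jeu_UNPUBLISHED:2020a} (as $\Orth(E)$ is order dense in itself), and it is a \uoLtop\ because $\tau$ is an \oLtop\ — if $T_\alpha\uoconv 0$ in $\Orth(E)$ then $\abs{T_\alpha}\wedge\abs{S}\oconv 0$ and hence $\abs{T_\alpha}\wedge\abs{S}\conv{\tau}0$ for every $S\in\Orth(E)$, which is precisely $T_\alpha\conv{\unb_{\Orth(E)}\tau}0$. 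Now invoke \cref{res:isomorphic_order_dense_ideals}: there are an order dense ideal $I$ of $E$ and an order dense ideal $\opideal$ of $\Orth(E)$ that are isomorphic vector lattices. Restricting the Hausdorff \uoLtop\ of $\Orth(E)$ to the regular vector sublattice $\opideal$ gives, by \cite[Proposition~5.12]{taylor:2019}, a Hausdorff \uoLtop\ on $\opideal$, which transports along the vector lattice isomorphism to a Hausdorff \uoLtop\ $\uoLt_I$ on $I$. It remains to propagate this from the order dense ideal $I$ up to $E$. Since $I$ is an ideal of $E$, the unbounded topology $\unb_I\uoLt_I$ on $E$ is defined; it is Hausdorff by \cite[Proposition~4.1]{deng_de_jeu_UNPUBLISHED:2020a}, because $\uoLt_I$ is Hausdorff and $I$ is order dense in $E$. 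To see it is a \uoLtop, let $x_\alpha\uoconv 0$ in $E$ and fix $s\in I$: then $\abs{x_\alpha}\wedge\abs{s}\oconv 0$ in $E$, this net lies in $I$ and is order bounded by $\abs{s}\in I$, hence it $\uoconv 0$ in $E$, hence $\uoconv 0$ in the regular vector sublattice $I$, hence (being order bounded in $I$) $\oconv 0$ in $I$, hence $\conv{\uoLt_I}0$ since $\uoLt_I$ is an \oLtop; as $s\in I$ was arbitrary, $x_\alpha\conv{\unb_I\uoLt_I}0$. Thus $E$ admits a Hausdorff \uoLtop, which is $(3)$, and the cycle closes.

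The step I expect to be the main obstacle is $(2)\Rightarrow(3)$, where the existence of the topology has to be carried back from $\Orth(E)$ to $E$; the route above does this by descending to the isomorphic order dense ideals of \cref{res:isomorphic_order_dense_ideals} and then re-expanding via the unbounded-topology construction, using repeatedly that Hausdorff \uoLtops\ restrict to regular vector sublattices and that order bounded nets do not distinguish order convergence from unbounded order convergence. Alternatively, $(2)\Rightarrow(3)$ can be deduced from \cref{res:isomorphic_universal_completions} together with the known fact that a vector lattice carries a Hausdorff \uoLtop\ exactly when its universal completion does, the Hausdorff \uoLtop\ on $E$ then being the restriction of the one on $E^\mathrm{u}\cong\Orth(E)^\mathrm{u}$.
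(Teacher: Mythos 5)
Your proof is correct, but it takes a genuinely different route from the paper's. The paper disposes of all four equivalences in two lines: since $E$ and $\Orth(E)$ are Dedekind complete, each is an order dense \emph{ideal} of its universal completion, these universal completions are isomorphic by \cref{res:isomorphic_universal_completions}, and the transfer of the existence of Hausdorff \oLtops\ and \uoLtops\ across order dense embeddings is then outsourced to \cite[Theorem~4.9.(3)]{deng_de_jeu_UNPUBLISHED:2020a}, applied once to $E$ and once to $\Orth(E)$. You instead run an explicit cycle: the two trivial implications from ``\uoLtop\ implies \oLtop''; $(1)\Rightarrow(4)$ via \cref{res:uoLtop_on_regular_sublattices_of_the_order_bounded_operators} applied to the band $\Orth(E)$ in $\regops(E)$ (an observation the paper itself makes, but only in the discussion \emph{after} the proposition); and a hands-on $(2)\Rightarrow(3)$ that first unbounds $\tau$ on $\Orth(E)$, descends to the isomorphic order dense ideals of \cref{res:isomorphic_order_dense_ideals} via \cite[Proposition~5.12]{taylor:2019}, and then re-expands to $E$ with the $\unb_I$ construction, checking Hausdorffness and the uo-Lebesgue property by hand (correctly, including the detour through uo-convergence to move order convergence of the order bounded net $\abs{x_\alpha}\wedge\abs{s}$ into the ideal $I$). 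Your version buys self-containedness: it uses only the basic unbounded-topology machinery already deployed elsewhere in the paper, and avoids the stronger external transfer theorem; the paper's version buys brevity and symmetry, treating all four statements at once rather than implication by implication. Your closing alternative remark is, in essence, exactly the paper's actual proof.
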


\begin{proof}
	As $E$ and $\Orth(E)$ are Dedekind complete, they are not just order dense vector sublattices of their universal completions but even order dense ideals; see \cite[p.126--127]{aliprantis_burkinshaw_POSITIVE_OPERATORS_SPRINGER_REPRINT:2006}. Since these universal completions are isomorphic vector lattices by \cref{res:isomorphic_universal_completions}, the proposition follows from a double application of \cite[Theorem~4.9.(3)]{deng_de_jeu_UNPUBLISHED:2020a}.
	\end{proof}

For a Dedekind complete vector lattice $E$, $\Orth(E)$, being a band in $\regops(E)$, is a regular vector sublattice of $\regops(E)$. A regular vector sublattice $\oplat$ of $\Orth(E)$ is, therefore, also a regular vector sublattice of $\regops(E)$, and \cref{res:oLtop_on_vector_lattice_of_operators} then shows how \oLtops\ on $\oplat$ can be obtained from an \oLtop\ on $E$ as (absolute) strong operator topologies. In particular, this makes the fact that part~(1) of \cref{res:oLtop_and_uoLtop_on_E_and_Orth(E)} implies its part~(2) more concrete. The fact that part~(1) implies part~(2) is made more concrete as a special case of the following consequence of \cref{res:uoLtops_on_lattices_of_operators_most_precise}.

\begin{theorem}\label{res:uoLtops_on_orthomorphism_most_precise}
	Let $E$ be a Dedekind complete vector lattice. Suppose that $E$ admits an \oLtop\ $\tau_E$. Take a regular vector sublattice  $\oplat$ of $\Orth(E)$, a non-empty subset $\opset$ of $\oplat$, and a non-empty subset $S$ of $E$.
	
	Then $\unb_\opset\optop{S}\tau_E$ is a \uoLtop\ on $\oplat$.
	
	We let $I_S$ denote the ideal of $E$ that is generated by $S$, and $I_\opset$ the ideal of $\oplat$ that is generated by $\opset$. For a net $\opnet$ in $\oplat$, the following are equivalent:
	\begin{enumerate}
		\item $T_\alpha\conv{\unb_\opset\optop{S}\tau_E}0$;
		\item $\abs{T_\alpha s}\wedge \abs{T s}\conv{\tau_E} 0$ for all $T\in\opset$ and $s\in S$;
		\item $\abs{T_\alpha x}\wedge \abs{Tx}\conv{\tau_E} 0$ for all $T\in I_\opset$ and $x\in I_S$.	
	\end{enumerate}
	
	Suppose that $\tau_E$ is actually a \emph{Hausdorff} \oLtop\ on $E$. Then the following are equivalent:
	\begin{enumerate}
		\item[(a)] $\unb_\opset\optop{S}\tau_E$ is a \necun\ \emph{Hausdorff} \uoLtop\ on~$\oplat$;
		\item[(b)] $I_S$ separates the points of $\oplat$ and $I_\opset$ is order dense in $\oplat$.
	\end{enumerate}
	
	In that case, the Hausdorff \uoLtop\ $\unb_\opset\optop{S}\tau_E$ on $\oplat$ is the restriction of the \necun\ Hausdorff \uoLtop\ on $\regops(E,F)$, i.e., of $\unb_{\regops(E,F)}\optop{E}\tau_E$, and the criteria in~\uppars{1},~\uppars{2}, and~\uppars{3} are also equivalent to:
	\begin{enumerate}
		\item[(4)] $(\abs{T_\alpha}\wedge \abs{T})x\conv{\uoLt_E} 0$ for all $T\in\regops(E)$ and $x\in E$.
	\end{enumerate}
\end{theorem}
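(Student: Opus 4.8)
The plan is to read off this theorem as the case $F=E$ of \cref{res:uoLtops_on_lattices_of_operators_most_precise}, using the orthomorphism identities \cref{eq:sup_of_two_orthormorphisms} and \cref{eq:modulus_of_orthormorphism} to rewrite its convergence criteria. As noted just before the statement, $\oplat$ is a regular vector sublattice of $\regops(E)$ because $\Orth(E)$ is a band in $\regops(E)$, so \cref{res:uoLtops_on_lattices_of_operators_most_precise} applies with $F=E$ and $\tau_F=\tau_E$. It already delivers that $\unb_\opset\optop{S}\tau_E$ is a \uoLtop\ on $\oplat$, the equivalence of~(a) and~(b) in the Hausdorff case, the identification of $\unb_\opset\optop{S}\tau_E$ with the restriction of $\unb_{\regops(E)}\optop{E}\tau_E$, and the equivalence of condition~(1) with each of the following: $(\abs{T_\alpha}\wedge\abs{T})\abs{s}\conv{\tau_E}0$ for all $T\in\opset$ and $s\in S$; $(\abs{T_\alpha}\wedge\abs{T})x\conv{\tau_E}0$ for all $T\in I_\opset$ and $x\in I_S$; and\textemdash in the Hausdorff case\textemdash $(\abs{T_\alpha}\wedge\abs{T})x\conv{\tau_E}0$ for all $T\in\regops(E)$ and $x\in E$. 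It then remains only to match these three criteria with~(2),~(3), and~(4) as stated.

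For the first two, I would exploit that $R_\alpha:=\abs{T_\alpha}\wedge\abs{T}$ lies in $\Orth(E)$: it is dominated by $\abs{T_\alpha}\in\Orth(E)$, and $\Orth(E)$ is an ideal in $\regops(E)$. In those two criteria $T$ ranges over $\opset$, resp.\ $I_\opset$, so $T$\textemdash hence also $\abs{T}$, $\abs{T_\alpha}$, and $R_\alpha$\textemdash is a positive orthomorphism; \cref{eq:sup_of_two_orthormorphisms} then gives $R_\alpha z=(\abs{T_\alpha}z)\wedge(\abs{T}z)$ for $z\in\posE$, and \cref{eq:modulus_of_orthormorphism} gives $\abs{T_\alpha}z=\abs{T_\alpha z}$ and $\abs{T}z=\abs{Tz}$ for $z\in\posE$. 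Taking $z=\abs{s}$ identifies $(\abs{T_\alpha}\wedge\abs{T})\abs{s}=R_\alpha\abs{s}$ with $\abs{T_\alpha s}\wedge\abs{Ts}$, so the first criterion is precisely~(2). For~(3), where $x$ need not be positive, I would moreover use that $\tau_E$ is locally solid together with $\abs{R_\alpha x}=R_\alpha\abs{x}$ (again \cref{eq:modulus_of_orthormorphism}, now for $R_\alpha$): then $(\abs{T_\alpha}\wedge\abs{T})x\conv{\tau_E}0$ if and only if $R_\alpha\abs{x}\conv{\tau_E}0$, i.e.\ if and only if $\abs{T_\alpha x}\wedge\abs{Tx}\conv{\tau_E}0$, which is~(3).

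For~(4), which is stated with $\uoLt_E$ in place of $\tau_E$: in the Hausdorff case under consideration $\unb_E\tau_E$ is a Hausdorff \uoLtop\ on $E$, hence equals the \necun\ such topology $\uoLt_E$, and therefore, for an order bounded net $(z_\alpha)_{\alpha\in\is}$ in $E$, the conditions $z_\alpha\conv{\tau_E}0$ and $z_\alpha\conv{\uoLt_E}0$ are equivalent\textemdash indeed, if $\abs{z_\alpha}\leq w$ for all $\alpha$, then $z_\alpha\conv{\uoLt_E}0$ forces $\abs{z_\alpha}\wedge w=\abs{z_\alpha}\conv{\tau_E}0$ and hence $z_\alpha\conv{\tau_E}0$, while the reverse implication always holds. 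For each fixed $T\in\regops(E)$ and $x\in E$ the net $\bigl((\abs{T_\alpha}\wedge\abs{T})x\bigr)_{\alpha\in\is}$ is order bounded, since $\bigabs{(\abs{T_\alpha}\wedge\abs{T})x}\leq(\abs{T_\alpha}\wedge\abs{T})\abs{x}\leq\abs{T}\abs{x}$ and $\abs{T}\abs{x}$ does not depend on $\alpha$; hence the third criterion from the first paragraph (stated with $\tau_E$) is equivalent to~(4). I do not anticipate a real obstacle here; the only point needing attention is to apply \cref{eq:modulus_of_orthormorphism} solely to operators genuinely lying in $\Orth(E)$\textemdash which holds in~(2) and~(3), but not in~(4), where instead only the elementary operator inequality $(\abs{T_\alpha}\wedge\abs{T})\abs{x}\leq\abs{T}\abs{x}$ is needed.
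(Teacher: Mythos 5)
Your proposal is correct and follows exactly the route the paper intends: the theorem is stated there without proof as a direct specialisation of \cref{res:uoLtops_on_lattices_of_operators_most_precise} to $F=E$, with the convergence criteria rewritten via \cref{eq:sup_of_two_orthormorphisms} and \cref{eq:modulus_of_orthormorphism} as flagged in the preamble to \cref{sec:topologies_on_the_orthomorphisms_of_a_vector_lattice}. Your added care in applying the orthomorphism identities only to $\abs{T_\alpha}\wedge\abs{T}\in\Orth(E)$ in~(2) and~(3), and in reconciling $\tau_E$ with $\uoLt_E$ in~(4) via the order bound $\abs{T}\abs{x}$, supplies precisely the details the paper leaves implicit.
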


\section{Comparing uniform and strong convergence structures on $\Orth(E)$} \label{sec:uniform_and_strong_convergence_structures_on_orth}

\noindent Let $E$ and $F$ be vector lattices, where $F$ is Dedekind complete, and let $\opnet$ be a net in $\regops(E,F)$. In   \cref{sec:uniform_and_strong_convergence_structures_on_regops}, we studied the relation between uniform and strong convergence of $\opnet$ for order convergence, unbounded order convergence, and\textemdash when applicable\textemdash convergence in the Hausdorff \uoLtop. In the present section, we consider the case where $\opnet$ is actually contained in $\Orth(E)$. As we shall see, the relation between uniform and strong convergence is now much more symmetrical than in the general case of \cref{sec:uniform_and_strong_convergence_structures_on_regops}; see \cref{res:ordertoorderinorth} (and \cref{res:oRKF}), \cref{res:uoRKF}, and \cref{res:uoLt_RKF}, below.

These positive results might, perhaps, lead one to wonder whether some of the three uniform convergence structures under consideration might actually even be identical for the orthomorphisms. This, however, is not the case. There even exist sequences of positive orthomorphisms on separable reflexive Banach lattices with weak order units showing that the two `reverse' implications in question are not generally valid. For this, we consider $E\coloneqq \Ell_p([0,1])$ for $1<p<\infty$. In that case, $\Orth(E)$ can canonically be identified with $\Ell_\infty([0,1])$ as an $f$-algebra; see \cite[Example~2.67]{aliprantis_burkinshaw_POSITIVE_OPERATORS_SPRINGER_REPRINT:2006}, for example. The uo-convergence of a net in the regular vector sublattice $\Ell_\infty([0,1])$ of $\Ell_0([0,1])$ coincides with that in $\Ell_0([0,1])$ which, according to \cite[Proposition~3.1]{gao_troitsky_xanthos:2017}, is simply convergence almost everywhere in the case of sequences. According to \cite[Theorem~6.3]{deng_de_jeu_UNPUBLISHED:2020a}, the convergence of a net in the Hausdorff \uoLtop\ of $\Ell_\infty([0,1])$ is equal to the convergence in measure. For $n\geq 1$, set $f_n\coloneqq n\chi_{[0,1/n]}$. Then $f_n\uoconv 0$ in $\Ell_\infty([0,1])$, but it is not true that $f_n\oconv 0$ in $\Ell_\infty([0,1])$ since the $f_n$ are not even order bounded in $\Ell_\infty([0,1])$. Using $\chi_{[(k-1)2^{-n},k2^{-n}]}$ for $n\geq 1$ and $k=1,\dotsc,2^n$, one easily finds a sequence that is convergent to zero in measure, but that is not convergent in any point of $[0,1]$.

We now start with uniform and strong order convergence for nets of orthomorphisms. For this, we need a few preparatory results. The first one is on general order continuous operators.

\begin{lemma} \label{res:ideal_to_band}
	Let $E$ be a Dedekind complete vector lattice, let $\opnet$ be a decreasing net in $\pos{\ocops(E)}$, and let $F$ be an order dense vector sublattice of $E$.  The following are equivalent:
	\begin{enumerate}
	\item $T_\alpha x\oconv 0$ in $E$ for all $x\in F$;
	\item $T_\alpha x\oconv 0$ in $E$ for all $x\in E$.
	\end{enumerate}
\end{lemma}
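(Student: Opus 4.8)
The implication (2) $\Rightarrow$ (1) is trivial since $F\subseteq E$. So the content is the converse: knowing that $T_\alpha x\oconv 0$ for all $x$ in the order dense sublattice $F$, deduce the same for all $x\in E$. First I would reduce to the case $x\in\posE$, which is legitimate since every $x\in E$ is a difference of two positive elements and the $T_\alpha$ are linear. So fix $x\in\posE$; I want to produce a net witnessing $T_\alpha x\oconv 0$ in $E$.

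The key structural fact to exploit is that $F$ is order dense in $E$, so for the fixed $x\in\posE$ there is an upward-directed net (indeed a set) $\{\,y\in F: 0\le y\le x\,\}$ whose supremum in $E$ is $x$. Since each $T_\alpha$ is positive and order continuous, $T_\alpha y\uparrow T_\alpha x$ over this net as $y\uparrow x$; equivalently $T_\alpha(x-y)\downarrow 0$ in $E$ as $y\uparrow x$ in $F$. The plan is then to combine, for a suitable choice of $y$, the decrease of $\opnet$ applied to that $y$ (which goes to $0$ by hypothesis) with the smallness of $T_\alpha(x-y)$, using that $\opnet$ is \emph{decreasing}: if $\alpha\ge\alpha_1$ for some fixed index $\alpha_1$, then $T_\alpha(x-y)\le T_{\alpha_1}(x-y)$, and the latter can be made small by choosing $y$ close to $x$ in $F$ \emph{before} letting $\alpha$ grow. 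Concretely, I would build the dominating net indexed by pairs: for $y\in F$ with $0\le y\le x$ and $\alpha_1\in\is$, let $z_{(y,\alpha_1)}$ be an upper bound (obtained from the hypothesis applied at $y$, e.g. a term $w_{\beta}$ of a net with $w_\beta\downarrow 0$ dominating $T_\alpha y$ for $\alpha\ge$ some index) plus $T_{\alpha_1}(x-y)$; one checks this family is downward directed and has infimum $0$ in $E$, using order continuity of each $T_{\alpha_1}$ to get $\inf_y T_{\alpha_1}(x-y)=0$ and the hypothesis to get $\inf$ over the other part equal to $0$. Then for each such pair one finds a tail of $\opnet$ on which $T_\alpha x = T_\alpha y + T_\alpha(x-y)\le w_\beta + T_{\alpha_1}(x-y)= z_{(y,\alpha_1)}$, which is exactly the definition of $T_\alpha x\oconv 0$ in $E$.

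The main obstacle is the bookkeeping with two independent directions: the net $\opnet$ (indexed by $\is$) and the approximating net $y\uparrow x$ in $F$, which have nothing to do with each other. The definition of order convergence in the paper explicitly allows the dominating net to be indexed by a different set, so the natural move is to index the dominating net by (a cofinal subset of) $\is\times\{\,y\in F:0\le y\le x\,\}$ (or by $\is$ times the filter of such $y$), and verify the two defining properties: downward directedness with infimum $0$, and the tail condition. I expect the infimum computation to be the delicate point — one needs that the infimum over the product family of $\bigl(\text{dominator of }T_\alpha y\bigr)+T_{\alpha_1}(x-y)$ is $0$; this follows because for any solid/order neighbourhood one can first shrink $T_{\alpha_1}(x-y)$ by order continuity of the \emph{single} operator $T_{\alpha_1}$ (pushing $y$ up), then shrink the first summand using the hypothesis at the now-fixed $y$. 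Alternatively, and perhaps more cleanly, I would invoke that a decreasing net of positive order continuous operators has an infimum $T$ in $\ocops(E)$ (by Dedekind completeness), show $T=0$ by testing on $F$ and using order density together with $T\ge 0$ and order continuity, and then conclude $T_\alpha\downarrow 0$ in $\ocops(E)$, whence $T_\alpha x\downarrow 0$ in $E$ for every $x\in\posE$ by \cite[Theorem~1.18]{aliprantis_burkinshaw_POSITIVE_OPERATORS_SPRINGER_REPRINT:2006} — this is likely the slickest route and avoids the messy product-index construction entirely.
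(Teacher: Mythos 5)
Your proposal is correct, and the ``alternative'' route you identify at the end---take the infimum $T$ of the decreasing net in the Dedekind complete space $\obops(E)$, show $T=0$ by testing on the order dense sublattice $F$ and using positivity and order continuity, then conclude $T_\alpha x\downarrow 0$ for all $x\in\posE$ via \cite[Theorem~1.18]{aliprantis_burkinshaw_POSITIVE_OPERATORS_SPRINGER_REPRINT:2006}---is precisely the paper's proof. Your first, product-indexed construction would also go through (and can be streamlined by noting that all the nets involved are monotone, so order convergence reduces to computing infima), but it is not needed.
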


\begin{proof} We need to show only that part~(1) implies part~(2). Let $T\in \obops(E)$ be such that $T_\alpha \downarrow T$ in $\obops(E)$. Then $T\in\pos{\ocops(E)}$. The hypothesis under part~(1) and \cite[Theorem~1.18]{aliprantis_burkinshaw_POSITIVE_OPERATORS_SPRINGER_REPRINT:2006} imply that $Tx=0$ for all $x\in\pos{F}$. Since $F$ is order dense in $E$ and $T$ is order continuous, it now follows from \cite[Theorem~1.34]{aliprantis_burkinshaw_POSITIVE_OPERATORS_SPRINGER_REPRINT:2006} that $T=0$. Using \cite[Theorem~1.18]{aliprantis_burkinshaw_POSITIVE_OPERATORS_SPRINGER_REPRINT:2006} once more, we conclude that $T_\alpha x\downarrow 0$ in $E$ for all $x\in\pos{E}$, and the statement in part~(2) follows.
\end{proof}

\begin{proposition}\label{res:monotone_order_orth}
	Let $E$ be a Dedekind complete vector lattice, let $\opnet$ be a decreasing net in $\pos{\Orth(E)}$, and let $S$ be a non-empty subset of $E$. The following are equivalent:
	\begin{enumerate}
		\item $T_\alpha s\oconv 0$ in $E$ for all $s \in S$;
		\item $T_\alpha x\oconv 0$ in $E$ for all $x \in B_S$.
	\end{enumerate}
In particular, if $E$ has a positive weak order unit $e$, then $T_\alpha x\oconv 0$ in $E$ for all $x\in E$ if and only if $T_\alpha e\downarrow 0$ in $E$.
\end{proposition}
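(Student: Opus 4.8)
The plan is to sidestep condition~(2) entirely by identifying, once and for all, the order limit of the decreasing net $\opnet$ when applied to a vector. Since $(2)\Rightarrow(1)$ is immediate from $S\subseteq B_S$, only $(1)\Rightarrow(2)$ needs work. First I would note that, because $E$ is Dedekind complete, $\regops(E)$ is Dedekind complete and $\Orth(E)$ is a band in it, hence itself Dedekind complete and order closed in $\regops(E)$. Consequently $T\coloneqq\inf_{\alpha}T_\alpha$ exists in $\pos{\Orth(E)}$, is also the infimum of $\opnet$ in $\regops(E)$, and $T_\alpha\downarrow T$ in $\regops(E)$.

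The key step is to show that $T_\alpha x\oconv Tx$ in $E$ for every $x\in E$. For $x\in\posE$, applying \cite[Theorem~1.18]{aliprantis_burkinshaw_POSITIVE_OPERATORS_SPRINGER_REPRINT:2006} to the net $T_\alpha-T\downarrow 0$ in $\regops(E)$ gives $(T_\alpha-T)x\downarrow 0$, i.e.\ $T_\alpha x\downarrow Tx$. For an arbitrary $x\in E$, the operator $T_\alpha-T$ is a positive orthomorphism, so \cref{eq:modulus_of_orthormorphism} yields $\abs{T_\alpha x-Tx}=(T_\alpha-T)\abs{x}$; by the previous case the right-hand side is a net that decreases to $0$, and it thereby witnesses that $T_\alpha x\oconv Tx$. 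Since order limits in an Archimedean vector lattice are unique, it follows that for every $x\in E$ one has $T_\alpha x\oconv 0$ in $E$ if and only if $Tx=0$.

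Granting this, the proposition is immediate: condition~(1) says precisely that $S\subseteq\ker T$, and condition~(2) says precisely that $B_S\subseteq\ker T$. But the kernel of the orthomorphism $T$ is a band in $E$ by \cite[Theorem~2.48]{aliprantis_burkinshaw_POSITIVE_OPERATORS_SPRINGER_REPRINT:2006}, and $B_S$ is the smallest band containing $S$, so $S\subseteq\ker T$ already forces $B_S\subseteq\ker T$; hence~(1) and~(2) are equivalent. For the final assertion, if $e$ is a positive weak order unit of $E$ then the band generated by $\{e\}$ is all of $E$, so applying the equivalence with $S=\{e\}$ shows that $T_\alpha x\oconv 0$ in $E$ for all $x\in E$ exactly when $T_\alpha e\oconv 0$ in $E$; and since $\opnet$ is decreasing and $e\in\posE$, the net $(T_\alpha e)$ is decreasing in $\posE$, so its order convergence to $0$ coincides with $T_\alpha e\downarrow 0$. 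I do not expect a genuine obstacle; the only point requiring care is the identification of infima in the band $\Orth(E)$ with infima in $\regops(E)$, which is what makes \cite[Theorem~1.18]{aliprantis_burkinshaw_POSITIVE_OPERATORS_SPRINGER_REPRINT:2006} applicable. A slightly longer alternative would first deduce $T_\alpha x\oconv 0$ for all $x\in I_S$ from the hypothesis using \cref{eq:sup_of_two_orthormorphisms}, and then apply \cref{res:ideal_to_band} on the band $B_S$ --- on which each $T_\alpha$ restricts to a positive order continuous operator and in which $I_S$ is order dense --- but the argument via $\ker T$ is cleaner.
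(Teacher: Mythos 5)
Your argument is correct, and it takes a genuinely different route from the paper. The paper proves that (1) implies (2) in two extension steps: first from $S$ to the ideal $I_S$ by domination ($0\leq y\leq\sum_i\lambda_i\abs{s_i}$ gives $0\leq T_\alpha y\leq\sum_i\lambda_i\abs{T_\alpha s_i}\downarrow 0$), and then from $I_S$ to $B_S$ by observing that the band-preserving $T_\alpha$ map everything into $B_S$ and invoking \cref{res:ideal_to_band} for the order dense sublattice $I_S$ of $B_S$, finally pulling back to $E$ by regularity. You instead identify the limit operator $T=\inf_\alpha T_\alpha$ once and for all (legitimate: $\Orth(E)$ is a band, hence a solid order closed subset of the Dedekind complete $\regops(E)$, so the infimum exists there and agrees with the one in $\regops(E)$), show $T_\alpha x\oconv Tx$ for every $x\in E$ via \cite[Theorem~1.18]{aliprantis_burkinshaw_POSITIVE_OPERATORS_SPRINGER_REPRINT:2006} and \cref{eq:modulus_of_orthormorphism} (in fact the inequality $\abs{(T_\alpha-T)x}\leq(T_\alpha-T)\abs{x}$, valid for any positive operator, already suffices there), and then reduce both conditions to inclusions of $S$, respectively $B_S$, in $\ker T$. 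The point where the orthomorphism hypothesis genuinely enters your argument is that $\ker T$ is a \emph{band}; in the paper it enters through the $T_\alpha$ being band preserving and order continuous. Your route is shorter and bypasses \cref{res:ideal_to_band} entirely, at the cost of leaning on the structural fact that kernels of orthomorphisms are bands rather than merely order closed; the paper's route is the one that generalises to the setting of \cref{res:ideal_to_band}, where the operators are only assumed order continuous. Your handling of the final assertion about a weak order unit (a decreasing net in $\posE$ order converges to $0$ exactly when it decreases to $0$) is also fine.
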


\begin{proof} We need to show only that part~(1) implies part~(2). Take $y\in \pos{I}_S$. There exist $s_1,\dotsc,s_n\in S$ and $\lambda_1,\dotsc,\lambda_n\geq 0$ such that $0\leq y\leq\sum_{i=1}^n \lambda_i \abs{s_i}$. Hence $0\leq T_\alpha y\leq\sum_{i=1}^n \lambda_i T_\alpha \abs{s_i}=\sum_{i=1}^n \lambda_i \abs{T_\alpha s_i}$ for $\alpha\in\is$, and the assumption then implies that $T_\alpha y\downarrow 0$ in $E$. 
	Since orthomorphisms preserve bands, we have $T_\alpha y\in B_S$ for all $\alpha\in\is$, and the fact that $B_S$ is an ideal of $E$ now shows that $T_\alpha y\downarrow 0$ in $B_S$. It follows that $T_\alpha y\oconv 0$ in $B_S$ for all $y\in I_S$.  Since the restriction of each $T_\alpha$ to the regular vector sublattice $B_S$ of $E$ is again order continuous, and since $I_S$ is an order dense vector sublattice of the vector lattice $B_S$, \cref{res:ideal_to_band} implies that $T_\alpha y\oconv 0$ in $B_S$ for all $y\in B_S$. The fact that $B_S$ is a regular vector sublattice of $E$ then yields that $T_\alpha y\oconv 0$ in $E$ for all $y\in B_S$.
\end{proof}

\begin{lemma}\label{res:lemmaRKF}
	
	Let $E$ be a Dedekind complete vector lattice, and let $\mathcal S$ be a subset of $\Orth(E)$ that is bounded above in $\regops(E)$. Then, for $x\in\pos{E}$,
	\[
	\left(\bigvee_{T\in \mathcal S}T\right) x=\bigvee_{T\in \mathcal S} Tx
	\]
\end{lemma}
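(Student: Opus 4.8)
The plan is to write $S\coloneqq\bigvee_{T\in\mathcal S}T$, which exists in $\regops(E)$ because $E$ is Dedekind complete (hence so is $\regops(E)$) and $\mathcal S$ is bounded above, to fix $x\in\posE$, and to prove $Sx=\bigvee_{T\in\mathcal S}Tx$ by establishing the two inequalities separately. The inequality $\bigvee_{T\in\mathcal S}Tx\leq Sx$ is immediate: since $S\geq T$ and $x\geq 0$ we have $Sx\geq Tx$ for every $T\in\mathcal S$, so $\{\,Tx:T\in\mathcal S\,\}$ is bounded above in $E$, its supremum exists by Dedekind completeness, and that supremum is at most $Sx$.

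For the reverse inequality I would pass to the upward directed set $\mathcal S^\vee$ of all finite suprema of members of $\mathcal S$. Iterating \cref{eq:sup_of_two_orthormorphisms} shows that each $U=T_1\vee\dotsb\vee T_n\in\mathcal S^\vee$ again lies in $\Orth(E)$ and satisfies $Ux=T_1x\vee\dotsb\vee T_nx$ for $x\in\posE$, whence $Ux\leq\bigvee_{T\in\mathcal S}Tx$. Next one observes that $\mathcal S\subseteq\mathcal S^\vee$ and that every element of $\mathcal S^\vee$ is at most $S$, so $\mathcal S^\vee$ is upward directed with $\sup\mathcal S^\vee=\sup\mathcal S=S$ in $\regops(E)$; that is, $\mathcal S^\vee\uparrow S$. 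Applying \cite[Theorem~1.18]{aliprantis_burkinshaw_POSITIVE_OPERATORS_SPRINGER_REPRINT:2006} to the net $\netgen{S-U}{U\in\mathcal S^\vee}$, which decreases to $0$ in $\regops(E)$, then yields $Ux\uparrow Sx$ in $E$, so $Sx=\sup_{U\in\mathcal S^\vee}Ux\leq\bigvee_{T\in\mathcal S}Tx$. Combining the two inequalities finishes the proof.

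The argument is essentially routine; the only points requiring a little care — and the closest thing to an obstacle — are the promotion of the pairwise identity \cref{eq:sup_of_two_orthormorphisms} to arbitrary finite suprema (a trivial induction, using that $\Orth(E)$ is a sublattice of $\regops(E)$) and the bookkeeping that $\mathcal S^\vee$ has the same supremum as $\mathcal S$ in $\regops(E)$. One could alternatively run the whole thing through the Riesz--Kantorovich formula for the supremum of a family of order bounded operators, but routing through $\mathcal S^\vee$ together with the pointwise behaviour of increasing nets of operators is cleaner and avoids dealing with partitions of $x$.
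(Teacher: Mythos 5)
Your proposal is correct and follows essentially the same route as the paper: both pass to the upward directed family $\mathcal S^\vee$ of finite suprema (which lies in $\Orth(E)$ and has the same supremum), use the iterated form of \cref{eq:sup_of_two_orthormorphisms} to evaluate finite suprema pointwise, and invoke the standard fact that for a monotone net of order bounded operators the supremum commutes with evaluation at positive elements (the paper cites \cite[Theorem~1.67.(b)]{aliprantis_burkinshaw_LOCALLY_SOLID_RIESZ_SPACES_WITH_APPLICATIONS_TO_ECONOMICS_SECOND_EDITION:2003} where you cite \cite[Theorem~1.18]{aliprantis_burkinshaw_POSITIVE_OPERATORS_SPRINGER_REPRINT:2006}, but these are the same underlying result). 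The only difference is presentational: you split the identity into two inequalities where the paper writes a single chain of equalities.
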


\begin{proof}
	Using \cite[Theorem~1.67.(b)]{aliprantis_burkinshaw_LOCALLY_SOLID_RIESZ_SPACES_WITH_APPLICATIONS_TO_ECONOMICS_SECOND_EDITION:2003} in the second step, we see that, for $x\in\pos{E}$,
	\[
	\left(\bigvee_{T\in \mathcal S}T\right) x=\left(\bigvee_{T^\vee\in \mathcal S^\vee}T^\vee\right)x=\bigvee_{T^\vee\in \mathcal S^\vee}T^\vee x.
	\]
	By \cref{eq:sup_of_two_orthormorphisms}, this equals
	\[
	\bigvee_{y^\vee\in (\mathcal S x)^\vee}y^\vee=\bigvee_{y\in \mathcal Sx} y=\bigvee_{T\in \mathcal S} Tx.
	\]
	\end{proof}

We can now establish our main result regarding uniform and strong order convergence for nets of orthomorphisms.

\begin{theorem}\label{res:ordertoorderinorth}
	
	Let $E$ be a Dedekind complete vector lattice, and let $\opnet$ be a net in $\Orth(E)$ that is order bounded in $\regops(E)$. Let $S$ be a non-empty subset of $E$ with $B_S=E$. The following are equivalent:
	\begin{enumerate}
		\item $T_\alpha\oconv 0$ in $\Orth(E)$;
		\item $T_\alpha\oconv 0$ in $\regops(E)$;
		\item $T_\alpha s\oconv 0$ in $E$ for all $s\in S$;
		\item $T_\alpha x\oconv 0$ in $E$ for all $x\in E$.
	\end{enumerate}
	In particular, when $E$ has a weak order unit $e$, then $T_\alpha\oconv 0$ in $\regops(E)$ if and only if $T_\alpha e\oconv 0$ in $E$.
\end{theorem}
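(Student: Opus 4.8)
The plan is to prove the cycle $(1)\Rightarrow(2)\Rightarrow(4)\Rightarrow(3)\Rightarrow(1)$; three of these four implications are essentially free. Since $\Orth(E)$ is a band in $\regops(E)$ and hence a regular vector sublattice, the inclusion map is order continuous, which gives $(1)\Rightarrow(2)$. The implication $(2)\Rightarrow(4)$ follows from \cref{res:o_to_o} (taking for the regular sublattice $\oplat$ all of $\regops(E)$). The implication $(4)\Rightarrow(3)$ is immediate since $S\subseteq E$. Note that the order boundedness hypothesis on $\opnet$ is not used so far; it enters only in $(3)\Rightarrow(1)$, which is the heart of the matter.

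For $(3)\Rightarrow(1)$, I would manufacture from the hypothesis a decreasing net in $\pos{\Orth(E)}$ that directly witnesses $T_\alpha\oconv 0$ in $\Orth(E)$. Since $E$ is Dedekind complete, so is $\regops(E)$, and $\Orth(E)$ is a (projection) band in it; thus the order boundedness of $\opnet$ yields an upper bound of $\{\abs{T_\alpha}:\alpha\in\is\}$ lying in $\pos{\Orth(E)}$, and for each $\alpha_0\in\is$ the supremum $R_{\alpha_0}\coloneqq\bigvee_{\alpha\geq\alpha_0}\abs{T_\alpha}$ exists in the Dedekind complete vector lattice $\Orth(E)$. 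Then $(R_{\alpha_0})_{\alpha_0\in\is}$ is a decreasing net in $\pos{\Orth(E)}$ with $\abs{T_\alpha}\leq R_{\alpha_0}$ whenever $\alpha\geq\alpha_0$, so it suffices to show that $R_{\alpha_0}\downarrow 0$ in $\Orth(E)$: in that case $(R_{\alpha_0})_{\alpha_0\in\is}$ is itself a witnessing net for $T_\alpha\oconv 0$ in $\Orth(E)$, which is~$(1)$.

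To see $R_{\alpha_0}\downarrow 0$, fix $s\in S$. Hypothesis $(3)$ gives $T_\alpha s\oconv 0$, hence $\abs{T_\alpha}\abs{s}=\abs{T_\alpha s}\oconv 0$ by \cref{eq:modulus_of_orthormorphism}. Choosing a net $y_\beta\downarrow 0$ such that $\abs{T_\alpha}\abs{s}\leq y_\beta$ for all $\alpha$ beyond some index, and passing to suprema over the corresponding tails, \cref{res:lemmaRKF} yields $R_{\alpha_0}\abs{s}=\bigvee_{\alpha\geq\alpha_0}\abs{T_\alpha}\abs{s}\leq y_\beta$ for $\alpha_0$ large; hence $R_{\alpha_0}\abs{s}\downarrow 0$, and since $\abs{R_{\alpha_0}s}=R_{\alpha_0}\abs{s}$ by \cref{eq:modulus_of_orthormorphism} again, also $R_{\alpha_0}s\oconv 0$. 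Thus the decreasing net $(R_{\alpha_0})_{\alpha_0\in\is}$ in $\pos{\Orth(E)}$ satisfies $R_{\alpha_0}s\oconv 0$ for every $s\in S$, so \cref{res:monotone_order_orth}, combined with $B_S=E$, gives $R_{\alpha_0}x\oconv 0$, hence $R_{\alpha_0}x\downarrow 0$, for all $x\in\pos E$. Finally, setting $R\coloneqq\inf_{\alpha_0\in\is}R_{\alpha_0}$ in $\Orth(E)$, each $R_{\alpha_0}-R$ is a positive orthomorphism, so $0\leq Rx\leq R_{\alpha_0}x$ for all $x\in\pos E$ and all $\alpha_0$; as $R_{\alpha_0}x\downarrow 0$, this forces $Rx=0$ for $x\in\pos E$, whence $R=0$. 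Therefore $R_{\alpha_0}\downarrow 0$ in $\Orth(E)$, closing the cycle. For the concluding ``in particular'' assertion one applies the equivalence of $(2)$ and $(3)$ with $S\coloneqq\{e\}$, using that $B_{\{e\}}=E$ since $e$ is a weak order unit.

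The main obstacle is the step $(3)\Rightarrow(1)$, and within it two points deserve care: the interchange of suprema with evaluation at points of $E$ (handled by \cref{res:lemmaRKF}, which is also where the order boundedness hypothesis becomes indispensable, since it is exactly what makes the operators $R_{\alpha_0}$ exist), and the upgrade from ``$R_{\alpha_0}$ annihilates $S$ in the limit'' to ``$R_{\alpha_0}\downarrow 0$ as orthomorphisms'', which is precisely what \cref{res:monotone_order_orth} delivers once $B_S=E$ is invoked. The rest — that order convergence in the band $\Orth(E)$ matches the relevant evaluations, and that a decreasing net dominating a null net witnesses order convergence — is routine bookkeeping with the definition of order convergence.
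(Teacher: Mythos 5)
Your proposal is correct and follows essentially the same route as the paper: the easy implications are disposed of in the same way, and for $(3)\Rightarrow(1)$ the paper likewise forms the tail suprema $\widetilde T_{\alpha}\coloneqq\bigvee_{\beta\geq\alpha}\abs{T_\beta}$ (your $R_{\alpha_0}$), applies \cref{res:lemmaRKF} to evaluate them on $\abs{s}$, and then invokes \cref{res:monotone_order_orth} with $B_S=E$ to conclude $\widetilde T_\alpha\downarrow 0$. The only cosmetic difference is that you take the suprema in $\Orth(E)$ rather than in $\regops(E)$, which is immaterial since $\Orth(E)$ is a band.
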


As for \cref{res:UBP}, the order boundedness of the net could equivalently have been required in $\Orth(E)$.

\begin{proof} Since $\opnet$ is supposed to be order bounded in the regular vector sublattice $\Orth(E)$, the equivalence of the parts~(1) and~(2) follows from \cite[Corollary~2.12]{gao_troitsky_xanthos:2017}. \cref{res:o_to_o} shows that part~(2) implies part~(4), and evidently part~(4) implies part~(3). The proof will be completed by showing that part~(3) implies part~(1). Suppose that $T_\alpha s\oconv 0$ in $E$ for all $s\in S$ or, equivalently, that $\abs{T_\alpha}\abs{s}=\abs{T_\alpha s}\oconv 0$ in $E$ for all $s\in S$. For $\alpha\in\is$, set $\widetilde T_\alpha\coloneqq\bigvee_{\beta\geq\alpha}\abs{T_\beta}$ in $\regops(E)$.  Since \cref{res:lemmaRKF} shows that $\widetilde T_\alpha\abs{s}=\bigvee_{\beta\geq\alpha}\abs{T_\beta}\abs{s}$ for $\alpha\in \is$ and $s\in S$, we see that $\widetilde T_\alpha\abs{s}\downarrow 0$ in $E$ for all $s\in S$. \cref{res:monotone_order_orth} then yields that $\widetilde T_\alpha x\oconv 0 $ for all $x\in B_{\abs{S}}=E$. Using that $\widetilde T_\alpha\downarrow$, it follows that $\widetilde T_\alpha\downarrow 0$ in $\regops(E)$. Since $\abs{T_\alpha}\leq \widetilde{T}_\alpha$ for $\alpha\in\is$, we see that $\abs{T_\alpha}\oconv 0$ in $\regops(E)$, as required.
\end{proof}

In view of \cref{res:o_to_o}, the most substantial part of \cref{res:ordertoorderinorth} is the fact that the parts~(3) and~(4) imply the parts~(1) and~(2). For this to hold in general, the assumption that $\opnet$ be order bounded is actually necessary. To see this, let $\Gamma$ be an uncountable set that is supplied with the counting measure, and consider $E\coloneqq\ell_p(\Gamma)$ for $1\leq p<\infty$. Set
\[
\is\coloneqq\{\,(n,S): n\geq 1,\,S\subset\Gamma\text{ is at most countably infinite}\,\}
\]
and, for $(n_1,S_1),\,(n_2,S_2)\in\is$, say that $(n_1,S_2)\leq (n_2,S_2)$ when $n_1\leq n_2$ and $S_1\subseteq S_2$. For $(n,S)\in\is$, define $T_{(n,S)}\in\centre(E)=\Orth(E)$ by setting
\[
T_{(n,S)}x\coloneqq n\chi_{\Gamma\setminus S}x
\]
for all $x:\Gamma\to\RR$ in $E$. Take an $x\in E$. Then the net $\netgen{T_{(n,S)}x}{(n,S)\in\is}$ has a tail
 $\netgen{T_{(n,S)}x}{(n,S)\geq (1,\mathrm{supp\,} x)}$ that is identically zero. Hence $T_{(n,S)}x\oconv 0$ in $E$ for all $x\in E$. We claim that $\netgen{T_{(n,S)}}{(n,S)\in\is}$ is not order convergent in $\Orth(E)$, and not even in $\obops(E)$. For this, it is sufficient to show that it does not have any tail that is order bounded in $\obops(E)$. Suppose, to the contrary, that there exist an $n_0\geq 1$, an at most countably infinite subset $S_0$ of $\Gamma$, and a $T\in\obops(E)$ such that $T_{(n,A)}\leq T$ for all $(n,A)\in\is$ with $n\geq n_0$ and $A\supseteq A_0$. As $\Gamma$ is uncountable, we can choose an $x_0\in\Gamma\setminus A_0$; we let $e_{x_0}$ denote the corresponding atom in $E$. Then, in particular, $T_{(n,A_0)} e_{x_0}\leq T e_{x_0}$ for all $n\geq n_0$. Hence $Te_{x_0}\geq n e_{x_0}$ for all $n\geq n_0$, which is impossible.

\smallskip

Using \cref{res:ordertoorderinorth} and \cref{res:coro_Tn}, the following is easily established. In contrast to \cref{res:ordertoorderinorth}, there is no order boundedness in the hypotheses because this is taken care of by \cref{res:coro_Tn}.

\begin{theorem}\label{res:oRKF}
	Let $E$ be a Dedekind complete vector lattice, and let $\seq{T_n}{n}$ be a sequence in $\Orth(E)$. Let $S$ be a non-empty subset of $E$ such that $I_S=E$. The following are equivalent:
\begin{enumerate}
	\item $T_n\oconv 0$ in $\Orth(E)$;
	\item $T_n\oconv 0$ in $\regops(E)$;
	\item $T_n s\oconv 0$ in $E$ for all $s\in S$;
	\item $T_n x\oconv 0$ in $E$ for all $x\in E$.
\end{enumerate}
In particular, when $E$ has a strong order unit $e$, then $T_n\oconv 0$ in $\Orth(E)$ if and only if $T_n e\oconv 0$ in $E$.	
\end{theorem}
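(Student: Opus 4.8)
The plan is to close a short cycle of implications, with \cref{res:ordertoorderinorth} and \cref{res:coro_Tn} doing the real work; the only genuinely new step is the passage from the pointwise hypothesis on $S$ to the pointwise hypothesis on all of $E$, and this is exactly where the assumption $I_S=E$ — rather than merely $B_S=E$ as in \cref{res:ordertoorderinorth} — is used.

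First I would dispose of the elementary implications. Part~(4) trivially implies part~(3). For the converse, given $x\in E=I_S$ I would choose $s_1,\dots,s_k\in S$ and scalars $\lambda_1,\dots,\lambda_k\geq 0$ with $\abs{x}\leq\sum_{i=1}^k\lambda_i\abs{s_i}$, and then, invoking \cref{eq:modulus_of_orthormorphism}, estimate $\abs{T_n x}=\abs{T_n}\abs{x}\leq\sum_{i=1}^k\lambda_i\abs{T_n}\abs{s_i}=\sum_{i=1}^k\lambda_i\abs{T_n s_i}$; since each $T_n s_i\oconv 0$ in $E$ forces $\abs{T_n s_i}\oconv 0$, the right-hand side is order convergent to $0$, hence so is $T_n x$, proving that part~(3) implies part~(4). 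Next, because $E$ is Dedekind complete, $\Orth(E)$ is a band in $\regops(E)$ and thus a regular vector sublattice of it, so \cref{res:o_to_o} applies to a sequence in $\Orth(E)$ as well as to one in $\regops(E)$; this shows that each of part~(1) and part~(2) implies part~(4).

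It then remains to recover parts~(1) and~(2) from part~(4). Assuming part~(4), the sequence $\seq{T_n x}{n}$ is order convergent in $E$ for every $x\in E$, so \cref{res:coro_Tn} yields that $\{\,T_n:n\geq 1\,\}$ is an order bounded subset of $\regops(E)$. At this point \cref{res:ordertoorderinorth} applies to the order bounded sequence $\seq{T_n}{n}$ with the set there taken to be $E$ (for which $B_E=E$ holds trivially): its part~(4) is our part~(4), so its parts~(1) and~(2), i.e.\ our parts~(1) and~(2), hold. This closes the cycle. The concluding assertion is then immediate, since a strong order unit $e$ satisfies $I_{\{e\}}=E$: the equivalence of parts~(1) and~(3) for $S=\{e\}$ says precisely that $T_n\oconv 0$ in $\Orth(E)$ if and only if $T_n e\oconv 0$ in $E$.

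I do not expect any real obstacle. The two points to watch are that the order boundedness hypothesis of \cref{res:ordertoorderinorth} is not assumed here but produced from \cref{res:coro_Tn} — which is precisely why the statement is confined to \emph{sequences} — and that, lacking an a priori order bound, one cannot imitate the construction of the decreasing net $\widetilde T_\alpha=\bigvee_{\beta\geq\alpha}\abs{T_\beta}$ used in the proof of \cref{res:ordertoorderinorth}, since those suprema need not exist in $\regops(E)$; hence the detour through \cref{res:o_to_o} and the crude domination estimate above, followed by \cref{res:coro_Tn}, is the natural path.
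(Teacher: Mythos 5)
Your proposal is correct and follows essentially the same route as the paper, which establishes this theorem precisely by combining \cref{res:coro_Tn} (to produce the order boundedness of $\seq{T_n}{n}$ from the pointwise hypothesis) with \cref{res:ordertoorderinorth}. Your extra details — the domination argument for (3)$\Rightarrow$(4) using $I_S=E$ and \cref{eq:modulus_of_orthormorphism}, and the use of \cref{res:o_to_o} for (1),(2)$\Rightarrow$(4) — are exactly the routine steps the paper leaves implicit.
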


\begin{remark}\label{remark_only_ideal} Even for Banach lattices with order continuous norms, the condition that $I_S=E$ in \cref{res:oRKF}, cannot be relaxed to $B_S=E$ as in \cref{res:ordertoorderinorth}. To see this, we choose $E\coloneqq c_0$ and set $e\coloneqq\bigvee_{n\geq 1} e_i/i^2$, where $\seq{e_i}{i}$ is the standard unit basis of $E$. It is clear that $B_e=E$. For $n\geq 1$, there exists a unique $T_n\in \Orth(E)$ such that, for $i\geq 1$, $T_ne_i= ne_i$ when $i=n$, and $T_n e_i=0$ when $i\neq n$. It is clear that $T_n e\xrightarrow{o} 0$ in $E$. However, a consideration of $T_n\left(\bigvee_{i\geq 1}e_i/i\right)$ for $n\geq 1$ shows that $\seq{T_n}{n}$ fails to be order bounded in $\Orth(E)$, hence cannot be order convergent in $\Orth(E)$.
\end{remark}

We continue our comparison of uniform and strong convergence structures on the orthomorphisms by considering unbounded order convergence. In that case, the result is as follows.

\begin{theorem}\label{res:uoRKF} Let $E$ be a Dedekind complete vector lattice, and let $\opnet$ be a net in $\Orth(E)$. Let $S$ be a non-empty subset of $E$ such that $B_S=E$. The following are equivalent:
	\begin{enumerate}
		\item $T_\alpha\uoconv 0$ in $\Orth{E}$;
		\item $T_\alpha\uoconv 0$ in $\regops(E)$;
		\item $T_\alpha s\uoconv 0$ in $E$ for all $s\in S$;
		\item $T_\alpha x\uoconv 0$ in $E$ for all $x\in E$.
	\end{enumerate}
	In particular, when $E$ has a weak order unit $e$, then $T_\alpha\uoconv 0$ in $\Orth(E)$ if and only if $T_\alpha e\uoconv 0$ in $E$.
\end{theorem}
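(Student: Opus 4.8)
The plan is to prove the cycle $(3)\Rightarrow(2)\Rightarrow(4)\Rightarrow(3)$ together with $(1)\Leftrightarrow(2)$. The last equivalence is immediate: $\Orth(E)$ is a band, hence an ideal, hence a regular vector sublattice of $\regops(E)$, so by \cite[Theorem~3.2]{gao_troitsky_xanthos:2017} a net in $\Orth(E)$ is uo-null in $\Orth(E)$ precisely when it is uo-null in $\regops(E)$. The implication $(4)\Rightarrow(3)$ is trivial because $S\subseteq E$. So it remains to establish $(3)\Rightarrow(2)$ and $(2)\Rightarrow(4)$; the ``in particular'' statement is then just the case $S=\{e\}$, since a weak order unit $e$ satisfies $B_{\{e\}}=E$.

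For $(3)\Rightarrow(2)$ I first reduce the test operators. Since $E$ is Dedekind complete, $\Orth(E)$ is a projection band in $\regops(E)$; writing $R\in\pos{\regops(E)}$ as $R=R_1+R_2$ with $R_1\in\pos{\Orth(E)}$ and $R_2\perp\Orth(E)$, and using $\abs{T_\alpha}\in\Orth(E)$, one gets $\abs{T_\alpha}\wedge R=\abs{T_\alpha}\wedge R_1$. Hence it suffices to show $\abs{T_\alpha}\wedge R\oconv 0$ in $\regops(E)$ for every $R\in\pos{\Orth(E)}$. The net $(\abs{T_\alpha}\wedge R)_\alpha$ lies in $\Orth(E)$ and is order bounded by $R$, so \cref{res:ordertoorderinorth} applies with the given $S$: it is enough that $(\abs{T_\alpha}\wedge R)s\oconv 0$ in $E$ for all $s\in S$. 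By \cref{eq:sup_of_two_orthormorphisms} and \cref{eq:modulus_of_orthormorphism}, $\bigabs{(\abs{T_\alpha}\wedge R)s}=(\abs{T_\alpha}\wedge R)\abs{s}=\abs{T_\alpha s}\wedge R\abs{s}$, which order converges to $0$ since $T_\alpha s\uoconv 0$ in $E$ (take the test element $R\abs{s}\in\posE$ in the definition of uo-convergence). Thus $\abs{T_\alpha}\wedge R\oconv 0$ in $\Orth(E)$, hence in $\regops(E)$, i.e.\ $(2)$ holds.

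For $(2)\Rightarrow(4)$ fix $x\in\posE$ (harmless, as $\abs{T_\alpha x}=\abs{T_\alpha}\abs{x}$). By $(1)$, $\abs{T_\alpha}\wedge R\oconv 0$ in $\Orth(E)$ for every $R\in\pos{\Orth(E)}$; each such net is order bounded, so \cref{res:o_to_o} gives $(\abs{T_\alpha}\wedge R)z\oconv 0$ in $E$ for all $z\in E$. Given $y$ in the principal ideal $I_x$, pick $\lambda>0$ with $y\le\lambda x$ and take $R=\lambda P_x$ (the order projection onto the band $B_x$ generated by $x$) and $z=x$, so that $\abs{T_\alpha}x\wedge y\le\abs{T_\alpha}x\wedge\lambda x=(\abs{T_\alpha}\wedge\lambda P_x)x\oconv 0$. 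Hence $\abs{T_\alpha}x\wedge y\oconv 0$ for every $y\in\pos{I_x}$, and $I_x$ is order dense in $B_x$ while $\abs{T_\alpha}x\in B_x$. A short $\limsup$ argument in the Dedekind complete lattice $B_x$ then upgrades this to full uo-nullity: for $u\in\pos{B_x}$ and $v\in\pos{I_x}$ with $v\le u$ one has $\abs{T_\alpha}x\wedge u\le(\abs{T_\alpha}x\wedge v)+(u-v)$, so $\limsup_\alpha(\abs{T_\alpha}x\wedge u)\le u-v$, and $\inf_v(u-v)=0$ by order density; thus $\abs{T_\alpha}x\uoconv 0$ in $B_x$, hence in $E$ by \cite[Theorem~3.2]{gao_troitsky_xanthos:2017}, i.e.\ $T_\alpha x\uoconv 0$ in $E$. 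Equivalently, one may argue that $T\mapsto Tx$ is, on $\Orth(E)$, an order continuous lattice homomorphism whose range is a sublattice of $B_x$ containing the order dense ideal $I_x$ (by \cref{res:two_ideals_isomorphic}), and that such maps preserve uo-convergence — reduce modulo the kernel, which is the projection band $(\Id-P_x)\Orth(E)$, and invoke \cref{res:uocnt} and \cite[Theorem~3.2]{gao_troitsky_xanthos:2017}.

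The step I expect to be the main obstacle is $(2)\Rightarrow(4)$: passing from uo-nullity of the operator net to pointwise uo-nullity is not a formality, since for general order bounded operators evaluation maps do not preserve uo-convergence (cf.\ the $\ell_1$ example in \cref{sec:continuity_of_orthomorphisms}). What rescues it for orthomorphisms is precisely that evaluation is then a lattice homomorphism with order dense range — equivalently, that uo-nullity of $\abs{T_\alpha}x$ may be tested against the order dense principal ideal $I_x\subseteq B_x$; the remaining work is reduced to \cref{res:ordertoorderinorth} and routine vector lattice manipulations.
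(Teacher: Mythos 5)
Your proof is correct, and its skeleton (the cycle $(3)\Rightarrow(2)\Rightarrow(4)\Rightarrow(3)$ plus $(1)\Leftrightarrow(2)$ via regularity, with \cref{res:ordertoorderinorth} as the engine for $(3)\Rightarrow(2)$) coincides with the paper's. Where you genuinely diverge is in how the two non-trivial implications are executed. The paper tests uo-convergence against the single operator $\Id$, which is a weak order unit of $\Orth(E)$: for $(3)\Rightarrow(2)$ it shows $(\abs{T_\alpha}\wedge\Id)\abs{s}\oconv 0$, applies \cref{res:ordertoorderinorth} once to get $\abs{T_\alpha}\wedge\Id\oconv 0$ in $\regops(E)$, and then invokes the external criterion that uo-nullity may be tested against a weak order unit alone; for $(2)\Rightarrow(4)$ it likewise uses only $\abs{T_\alpha}\wedge\Id\oconv 0$, evaluates at $\abs{x}$ via \cref{res:o_to_o}, and again cites that criterion inside $B_{\abs{x}}$ (where $\abs{x}$ is a weak order unit). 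You instead avoid that citation entirely: in $(3)\Rightarrow(2)$ you test against every $R\in\pos{\Orth(E)}$ (after the clean reduction $\abs{T_\alpha}\wedge R=\abs{T_\alpha}\wedge R_1$ via the projection-band decomposition of $\regops(E)$), applying \cref{res:ordertoorderinorth} once per $R$; in $(2)\Rightarrow(4)$ you test against $\lambda P_x$ and upgrade from the order dense ideal $I_x$ to all of $B_x$ by an explicit $\limsup$/order-density argument. The paper's route is shorter and reuses a named lemma; yours is more self-contained and makes visible exactly why the weak-order-unit reduction is legitimate. Both are complete; your closing heuristic paragraph and the alternative ``quotient by the kernel'' sketch are not needed for the argument and are the only places where rigour dips, but the main line stands on its own.
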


\begin{proof} Since $\Orth(E)$ is a regular vector sublattice of $\regops(E)$, the equivalence of the parts~(1) and~(2) is clear from \cite[Theorem~3.2]{gao_troitsky_xanthos:2017}

We prove that part~(2) implies part~(4). Suppose that $T_\alpha\uoconv 0$ in $\regops(E)$, so that, in particular, $\abs{T_\alpha}\wedge \Id\oconv 0$ in $\regops(E)$. Take $x\in E$. Using \cref{eq:sup_of_two_orthormorphisms} in the second step, and  \cref{res:o_to_o} in the third, we have
\[
(\abs{T_\alpha} \abs{x})\wedge \abs{x}=(\abs{T_\alpha} \abs{x})\wedge (\Id \abs{x})=(\abs{T_\alpha}\wedge \Id) \abs{x}\oconv 0.
\]
Since the net $\netgen{\abs{T_\alpha}\abs{x}}{\alpha\in\is}$ is contained in the band $B_{\abs{x}}$, it now follows from \cite[Proposition~7.4]{deng_de_jeu_UNPUBLISHED:2020a} that $\abs{T_\alpha}\abs{x}\uoconv 0$ in $E$. As $\abs{T_\alpha}\abs{x}=\abs{T_\alpha x}$, we conclude that $T_\alpha x\uoconv 0$ in $E$.

It is clear that part~(4) implies part~(3).

We prove that part~(3) implies part~(2). Suppose that $T_\alpha s\uoconv 0$ in $E$ for all $s\in S$, so that also  $\abs{T_\alpha} \abs{s}=\abs{T_\alpha s}\uoconv 0$ in $E$ for $s\in S$. Using \cref{eq:sup_of_two_orthormorphisms} again, we have
	\[
	(\abs{T_\alpha} \wedge \Id)\abs{s}=(\abs{T_\alpha} \abs{s})\wedge \abs{s}\oconv 0
	\]
	in $E$ for all $x\in S$. In view of the order boundedness of $\netgen{\abs{T_\alpha}\wedge \Id}{\alpha \in\is}$, \cref{res:ordertoorderinorth} then yields that $\abs{T_\alpha} \wedge \Id\oconv 0$ in $\regops(E)$. As $\Id$ is a weak order unit of $\Orth(E)$, \cite[Lemma 3.2]{gao_xanthos:2014} (or the more general \cite[Proposition~7.4]{deng_de_jeu_UNPUBLISHED:2020a}) shows that $T_\alpha\uoconv 0$ in $\regops(E)$.
\end{proof}

We now consider uniform and strong convergence of nets of orthomorphisms for the Hausdorff \uoLtop. Let $E$ be a Dedekind complete vector lattice. Suppose that $E$ admits a \necun\ Hausdorff \uoLtop\ $\uoLt_E$. We recall from \cref{res:uoLtops_on_orthomorphism_most_precise} that $\Orth(E)$ then also admits a \necun\ Hausdorff \uoLtop\ $\uoLt_{\Orth(E)}$, and that this topology equals $\unb_{\Orth(E)}\optop{E}\uoLt_E$. Furthermore, for a net $\opnet$ in $\Orth(E)$, we have that $T_\alpha\conv{\uoLt_{\Orth(E)}}0$ if and only if $\abs{T_\alpha x}\wedge\abs{Tx}\conv{\uoLt_E}0$ for all $T\in\Orth(E)$ and $x\in E$.

We need two preparatory results.

\begin{lemma}\label{res:uoLtweakunit}
	Let $E$ be a vector lattice that admits a \necun\ Hausdorff \uoLtop\ $\uoLt_E$. Suppose that $E$ has a positive weak order unit $e$. Let $\net$ be a net in $E$. Then $x_\alpha\conv{\uoLt_E} 0$ in $E$ if and only if $\abs{x_\alpha}\wedge e\conv{\uoLt_E} 0$ in $E$.
\end{lemma}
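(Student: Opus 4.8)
The forward implication is immediate: since $\uoLt_E$ is locally solid and $\abs{\,\abs{x_\alpha}\wedge e\,}\le\abs{x_\alpha}$, every solid $\uoLt_E$-neighbourhood of $0$ that eventually contains $x_\alpha$ also eventually contains $\abs{x_\alpha}\wedge e$; hence $x_\alpha\conv{\uoLt_E}0$ implies $\abs{x_\alpha}\wedge e\conv{\uoLt_E}0$. The content of the lemma is the converse, which I would set up as follows. Recall from the preliminaries that $\unb_E\uoLt_E=\uoLt_E$, so by \cite[Theorem~3.1]{deng_de_jeu_UNPUBLISHED:2020a} (applied with $S=E$), $x_\alpha\conv{\uoLt_E}0$ is equivalent to $\abs{x_\alpha}\wedge y\conv{\uoLt_E}0$ for every $y\in\posE$. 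Thus, assuming $\abs{x_\alpha}\wedge e\conv{\uoLt_E}0$, it suffices to fix $y\in\posE$ and prove $\abs{x_\alpha}\wedge y\conv{\uoLt_E}0$.

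The driving estimate is that, for every $n\ge1$,
\[
0\ \le\ \abs{x_\alpha}\wedge y\ \le\ \bigl(\abs{x_\alpha}\wedge ne\bigr)+(y-ne)^+\ \le\ n\bigl(\abs{x_\alpha}\wedge e\bigr)+(y-ne)^+,
\]
where the middle step uses $a\wedge b\le(a\wedge c)+(b-c)^+$ and the last uses $a\wedge nb\le n(a\wedge b)$, both valid for positive $a,b$. For fixed $n$ the first summand on the right is $\uoLt_E$-small for large $\alpha$, because scalar multiplication is $\uoLt_E$-continuous. The second summand does not depend on $\alpha$, and I would make it $\uoLt_E$-small by choosing $n$ large: since $e$ is a positive weak order unit of the Archimedean vector lattice $E$, one has $(y-ne)^+\downarrow 0$ as $n\to\infty$, and since $\uoLt_E$ is in particular an \oLtop, order convergence implies $\uoLt_E$-convergence, so $(y-ne)^+\conv{\uoLt_E}0$. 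The fact $(y-ne)^+\downarrow0$ is classical and could simply be quoted; if a self-contained argument is wanted, one shows that any lower bound $z\in\posE$ of $\{(y-ne)^+\}_{n}$ satisfies, using the disjointness $(y-ne)^+\perp(ne-y)^+$, the inequality $n\bigl((y-ne)^+\wedge e\bigr)=\bigl(n(y-ne)^+\bigr)\wedge ne\le y\wedge ne\le y$, whence $n(z\wedge e)\le y$ for all $n$, so $z\wedge e=0$ by the Archimedean property and therefore $z=0$ since $\{e\}^\dc=\{0\}$.

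The conclusion is then the routine two-step argument. Given a solid $\uoLt_E$-neighbourhood $U$ of $0$, choose a solid $\uoLt_E$-neighbourhood $V$ of $0$ with $V+V\subseteq U$, then $n_0\ge1$ with $(y-n_0e)^+\in V$, and then $\alpha_0$ with $n_0\bigl(\abs{x_\alpha}\wedge e\bigr)\in V$ for all $\alpha\ge\alpha_0$. For such $\alpha$ the displayed estimate gives $0\le\abs{x_\alpha}\wedge y\le n_0\bigl(\abs{x_\alpha}\wedge e\bigr)+(y-n_0e)^+\in V+V\subseteq U$, so $\abs{x_\alpha}\wedge y\in U$ by solidness of $U$. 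Hence $\abs{x_\alpha}\wedge y\conv{\uoLt_E}0$, and since $y\in\posE$ was arbitrary, $x_\alpha\conv{\uoLt_E}0$. The only point that requires care is the ordering of the quantifiers: the cut-off $n_0$ must be chosen before passing to a tail in $\alpha$, because the estimate controls $\abs{x_\alpha}\wedge y$ only up to the $\alpha$-independent error $(y-ne)^+$. Apart from that, and the classical weak-order-unit fact, everything is formal.
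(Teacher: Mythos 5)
Your proof is correct, but it takes a genuinely different route from the paper's. Both arguments start from the identity $\unb_E\uoLt_E=\uoLt_E$, which reduces the converse to showing that $\abs{x_\alpha}\wedge e\conv{\uoLt_E}0$ forces $\abs{x_\alpha}\wedge y\conv{\uoLt_E}0$ for every $y\in\posE$. The paper gets this in one stroke: since $e$ is a positive weak order unit, $I_e$ is order dense in $E$, hence $\uoLt_E$-dense (order convergence implies $\uoLt_E$-convergence), and \cite[Proposition~9.8]{taylor:2019} then says that the unbounded topology generated via $\{e\}$ already coincides with $\unb_E\uoLt_E$. You instead prove the needed implication by hand, via the splitting $\abs{x_\alpha}\wedge y\le n\bigl(\abs{x_\alpha}\wedge e\bigr)+(y-ne)^+$ and the classical fact $(y-ne)^+\downarrow 0$, which is where the weak order unit hypothesis and the \oL\ property of $\uoLt_E$ enter. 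All the inequalities you use check out: $a\wedge b\le(a\wedge c)+(b-c)^+$ follows from $b=b\wedge c+(b-c)^+$ together with $a\wedge(u+v)\le a\wedge u+a\wedge v$ for positive elements, $a\wedge nb\le(na)\wedge(nb)=n(a\wedge b)$ is immediate, and your disjointness argument for $(y-ne)^+\downarrow0$ is sound. Your remark about fixing the cut-off $n_0$ before passing to a tail in $\alpha$ is exactly the right point of care. What your version buys is self-containedness: it replaces the citation to Taylor's density result by an explicit two-neighbourhood estimate. What the paper's version buys is brevity and a statement that generalises for free (any subset of $E$ whose generated ideal is $\uoLt_E$-dense can serve as a test set, not just a weak order unit).
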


\begin{proof} We need to show only the ``if''-part. Suppose that $\abs{x_\alpha}\wedge e\conv{\uoLt_E} 0$ in $E$. For each $x\in E$, there exists a net $\netgen{y_\beta}{\beta\in\istwo}$ in $I_e$ such that $y_\beta\oconv x$, and then certainly $y_\beta\conv{\uoLt_E} x$. Hence $\overline{I_e}^{\uoLt_E}=E$. An appeal to \cite[Proposition~9.8]{taylor:2019} then shows that $x_\alpha\conv{\unb_{E}\uoLt_E}0$. Since $\unb_E\uoLt_E=\uoLt_E$, we are done.	
\end{proof}

Our second preparatory result is in the same vein as \cref{res:tauctn}.

\begin{lemma}\label{res:oLt_preparation}
	Let $E$ be a vector lattice with the principal projection property that admits a \uppars{not necessarily Hausdorff} \oLtop\ $\tau_E$, and let $\opnet$ be a net in $\Orth(E)$. Let $S$ be a non-empty subset of $E$ such that $B_S=E$. Suppose that $T_\alpha s\conv{\tau_E}0$ for all $s\in S$. Then $T_\alpha x\conv{\unb_E\tau_E}0$ for all $x\in E$.
\end{lemma}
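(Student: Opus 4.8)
The plan is to argue much as in the proof of \cref{res:tauctn}: reduce to the positive case, split the relevant element along a suitable principal band, and estimate the two pieces separately, one piece using that $\tau_E$ is an \oLtop\ and the other using the hypothesis. The twist is that the hypothesis only controls $T_\alpha s$ for $s\in S$, so a bootstrapping step from $S$ to all of $E$ is needed; this is where the assumption $B_S=E$ comes in, once more in tandem with $\tau_E$ being an \oLtop.

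First I would reduce, using \cref{eq:modulus_of_orthormorphism} and the local solidness of $\tau_E$, to the case where every $T_\alpha$ and every $s\in S$ is positive\textemdash replacing $T_\alpha$ by $\abs{T_\alpha}$ and $S$ by $\{\,\abs{s}:s\in S\,\}$, which generates the same band\textemdash and where what has to be shown is that $(T_\alpha x)\wedge e\conv{\tau_E}0$ for all $x,e\in\posE$. The core of the argument is then the estimate
\begin{equation*}
(T_\alpha x)\wedge s\leq\tfrac1\mu x+\mu\,T_\alpha s\qquad\text{for all }\mu>0,\ x\in\posE,\ s\in S.
\end{equation*}
To obtain it, fix $\mu>0$ and let $Q_\mu$ be the order projection in $E$ onto the band $B$ generated by $(x-\mu s)^+$, which exists by the principal projection property. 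From \cref{res:projection_inequality} one reads off $\mu Q_\mu s\leq Q_\mu x\leq x$, so $Q_\mu s\leq\tfrac1\mu x$, and $(\Id-Q_\mu)x\leq\mu s$, hence also $(\Id-Q_\mu)x\leq\mu(\Id-Q_\mu)s$. Being band preserving, $T_\alpha$ maps $B$ into $B$ and $B^\dc$ into $B^\dc$, so $Q_\mu(T_\alpha x)=T_\alpha(Q_\mu x)$ and $(\Id-Q_\mu)(T_\alpha x)=T_\alpha\bigl((\Id-Q_\mu)x\bigr)$; decomposing $(T_\alpha x)\wedge s$ along the complementary bands $B$ and $B^\dc$, bounding its $B$-component by $Q_\mu(T_\alpha x)\wedge Q_\mu s\leq Q_\mu s\leq\tfrac1\mu x$ and its $B^\dc$-component by $(\Id-Q_\mu)(T_\alpha x)\leq\mu(\Id-Q_\mu)(T_\alpha s)\leq\mu T_\alpha s$, yields the displayed inequality. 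Since $\tfrac1n x\downarrow 0$ and $\tau_E$ is an \oLtop, for a solid $\tau_E$-neighbourhood $U$ of $0$, with a solid $V$ satisfying $V+V\subseteq U$, one first chooses an integer $\mu\geq 1$ with $\tfrac1\mu x\in V$, and then $\mu T_\alpha s\in V$ eventually by hypothesis; so $(T_\alpha x)\wedge s\in U$ eventually, and hence $(T_\alpha x)\wedge s\conv{\tau_E}0$.

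To bootstrap from $S$ to all of $E$, note that $B_S=E$ forces the ideal $I_S$ to be order dense in $E$. For $e\in\posE$ the set $D\coloneqq\{\,w\in I_S:0\leq w\leq e\,\}$ is upward directed, and a short Archimedean argument\textemdash if $0\leq u\leq e-w$ for all $w\in D$, then $e-u$ is an upper bound of $D$ lying below $e$, whence for any $w_1\in I_S$ with $0<w_1\leq u$ one would get $nw_1\leq e$ for every $n$ (by induction, as $nw_1\leq e-u$ keeps $nw_1$ in $D$), so $u=0$\textemdash shows $\inf_{w\in D}(e-w)=0$; as $\tau_E$ is an \oLtop, $e-w\conv{\tau_E}0$ along $D$. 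Given a solid $\tau_E$-neighbourhood $U$ of $0$ and a solid $V$ with $V+V\subseteq U$, choose $w_0\in D$ with $e-w_0\in V$ and write $w_0\leq\sum_{i=1}^n\lambda_i s_i$ with $s_i\in S$, $\lambda_i\geq 0$. Then
\begin{equation*}
(T_\alpha x)\wedge e\leq (T_\alpha x)\wedge w_0+(e-w_0)\leq\sum_{i=1}^n\lceil\lambda_i\rceil\bigl[(T_\alpha x)\wedge s_i\bigr]+(e-w_0),
\end{equation*}
and the first term lies in $V$ eventually by the previous step, so $(T_\alpha x)\wedge e\in U$ eventually. This gives $(T_\alpha x)\wedge e\conv{\tau_E}0$ for all $x,e\in\posE$, which, on undoing the reduction, is exactly the claim that $T_\alpha x\conv{\unb_E\tau_E}0$ for every $x\in E$.

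I expect the estimate in the second paragraph to be the crux: one has to put the inequality into precisely the shape where $\tfrac1\mu x$ is made small by taking $\mu$ large (using only that $E$ is Archimedean and $\tau_E$ is an \oLtop) while $\mu T_\alpha s$ is made small by taking $\alpha$ large once $\mu$ is fixed (using the hypothesis), and the band generated by $(x-\mu s)^+$ together with \cref{res:projection_inequality} is exactly what produces this\textemdash it is the substitute, available already under the principal projection property, for the appeal to \cref{res:interesting_ineq}, and hence to Dedekind completeness, in the proof of \cref{res:tauctn}. The order-density argument in the third paragraph is the only other delicate point, and it too uses nothing beyond $E$ being Archimedean and $\tau_E$ being an \oLtop.
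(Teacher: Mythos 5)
Your proof is correct, but it takes a genuinely different route from the paper's. The paper's central device is the operator identity $(\abs{T_\alpha x})\wedge\abs{x}=(\abs{T_\alpha}\wedge\Id)\abs{x}$ together with the two bounds $\abs{T_\alpha}\wedge\Id\leq\abs{T_\alpha}$ and $\abs{T_\alpha}\wedge\Id\leq\Id$: after noting that the hypothesis extends trivially from $S$ to $I_S$, one picks $x_{\beta_0}\in I_S$ order-close (hence, $\tau_E$ being an \oLtop, $\tau_E$-close) to $x$ and estimates $(\abs{T_\alpha}\wedge\Id)\abs{x}\leq\abs{T_\alpha}\abs{x_{\beta_0}}+\abs{x-x_{\beta_0}}$, the first term eventually small by hypothesis and the second by the choice of $x_{\beta_0}$; no scaling parameter and no band decomposition are needed, and the passage from the single test vector $\abs{x}$ to all of $E$ is delegated to \cite[Proposition~9.8]{taylor:2019} together with the fact that $B_{\abs{x}}$ is a projection band\textemdash which is where the principal projection property enters. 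You instead approximate the \emph{test vector} $e$ from below by elements of $I_S$, and your core estimate $(T_\alpha x)\wedge s\leq\tfrac{1}{\mu}x+\mu T_\alpha s$ is a principal-projection-property rederivation of \cref{res:interesting_ineq} (which, as stated, presupposes Dedekind completeness), obtained as in \cref{res:projection_inequality} but with the projection taken in $E$ onto the band generated by $(x-\mu s)^+$ rather than in $\Orth(E)$. Your version is longer but more self-contained\textemdash it avoids the external appeal to Taylor's result and makes the analogy with \cref{res:tauctn} explicit\textemdash whereas the paper's is shorter because the bound $\abs{T_\alpha}\wedge\Id\leq\Id$ absorbs the error term without any $\mu$. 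Both arguments use the order density of $I_S$ (coming from $B_S=E$) and both use the principal projection property, just in different places.
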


\begin{proof}
Using \cref{eq:modulus_of_orthormorphism}, it follows easily that $T_\alpha x\conv{\tau_E}0$ for all $x\in I_S$. Take an $x\in E$, and let $U$ be a solid $\tau_E$-neighbourhood $U$ of 0. Choose a  $\tau_E$-neighbour\-hood $V$ of $0$ such that $V+V\subseteq U$. There exists a net $\nettwo$ in $I_S$ such that $x_\beta\oconv x$ in $E$, and then we can choose a $\beta_0\in\istwo$ such that $\abs{x-x_{\beta_0}}\in V$. As $\abs{T_\alpha}\abs{x_{\beta_0}}=\abs{T_\alpha x_{\beta_0}}\conv{\tau_E}0$, there exists an $\alpha_0\in\is$ such that $\abs{T_\alpha}\abs{x_{\beta_0}}\in V$ for all $\alpha\geq\alpha_0$. For all $\alpha\geq\alpha_0$, we then have
\begin{align*}
0&\leq (\abs{T_\alpha x} )\wedge\abs{x}\\
&=(\abs{T_\alpha}\wedge\Id)\abs{x}\\
&\leq (\abs{T_\alpha}\wedge\Id)\abs{x_{\beta_0}}+ (\abs{T_\alpha}\wedge\Id)\abs{x - x_{\beta_0}}\\
&\leq \abs{T_\alpha}\abs{x_{\beta_0}} + \abs{x-x_{\beta_0}}\\
&\in V+V\subseteq U.
\end{align*}
As $U$ is solid, we see that $(\abs{T_\alpha x} )\wedge\abs{x}\in U$ for $\alpha\geq\alpha_0$, and we conclude that $(\abs{T_\alpha x})\wedge\abs{x}\conv{\tau_E}0$. Since $\abs{T_\alpha x}\in B_{\abs{x}}$ for $\alpha\in\is$, it then follows from \cite[Proposition~9.8]{taylor:2019} that $\abs{T_\alpha x}\wedge \abs{y}\conv{\tau_E}0$ in $E$ for all $y\in B_{\abs{x}}$. As $B_{\abs{x}}$ is a projection band in $E$, this holds, in fact, for all $y\in E$.

\end{proof}

\begin{theorem}\label{res:uoLt_RKF} Let $E$ be a Dedekind complete vector lattice. Suppose that $E$ admits a \necun\ Hausdorff \uoLtop\ $\uoLt_E$, so that $\Orth(E)$ and $\regops(E)$ also admit \necun\ Hausdorff \uoLtops\ $\uoLt_{\Orth(E)}$ and $\uoLt_{\regops(E)}$, respectively. Let $\opnet$ be a net in $\Orth(E)$. Let $S$ be a non-empty subset $S$ of $E$ such that $B_S=E$. The following are equivalent:
	\begin{enumerate}
		\item $T_\alpha\conv{\uoLt_{\Orth(E)}} 0$ in $\Orth(E)$;\label{part:uoLt_RKF_1}
		\item $T_\alpha\conv{\uoLt_{\regops(E)}} 0$ in $\regops(E)$;\label{part:uoLt_RKF_3}
		\item $T_\alpha s\conv{\uoLt_E} 0$ in $E$ for all $s\in S$;\label{part:uoLt_RKF_4}
		\item $T_\alpha x\conv{\uoLt_E} 0$ in $E$ for all $x\in E$.\label{part:uoLt_RKF_2}
	\end{enumerate}
In particular, when $E$ has a weak order unit $e$, then $T_\alpha\conv{\uoLt_{\Orth(E)}}0$ in $\Orth(E)$ if and only if $T_\alpha e\conv{\uoLt_E} 0$ in $E$.	

\end{theorem}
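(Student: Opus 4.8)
The strategy mirrors the proof of \cref{res:uoRKF}, using the fact that $\Orth(E)$ is a regular vector sublattice of $\regops(E)$ and exploiting the continuity properties of orthomorphisms established in \cref{sec:continuity_of_orthomorphisms}. First I would observe that the equivalence of \partref{part:uoLt_RKF_1} and \partref{part:uoLt_RKF_3} is immediate: by \cref{res:uoLtop_on_regular_sublattices_of_the_order_bounded_operators}, the topology $\uoLt_{\Orth(E)}$ is precisely the restriction of $\uoLt_{\regops(E)}$ to $\Orth(E)$, since $\Orth(E)$ is a regular (indeed band) vector sublattice of $\regops(E)$. So the real content is the chain $\partref{part:uoLt_RKF_3}\Rightarrow\partref{part:uoLt_RKF_2}\Rightarrow\partref{part:uoLt_RKF_4}\Rightarrow\partref{part:uoLt_RKF_1}$, of which the middle implication is trivial.

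\textbf{From \partref{part:uoLt_RKF_3} to \partref{part:uoLt_RKF_2}.} Suppose $T_\alpha\conv{\uoLt_{\regops(E)}}0$. By \cref{res:uoLtop_on_regular_sublattices_of_the_order_bounded_operators}\partref{part:topology_on_operators_2} (the characterisation with $T=\Id$ and arbitrary $x\in E$), we get $(\abs{T_\alpha}\wedge\Id)x\conv{\uoLt_E}0$ for all $x\in E$. Using \cref{eq:sup_of_two_orthormorphisms}, for $x\in\posE$ this reads $\abs{T_\alpha}x\wedge x=(\abs{T_\alpha}\wedge\Id)x\conv{\uoLt_E}0$, and since $\abs{T_\alpha}x=\abs{T_\alpha x}$ by \cref{eq:modulus_of_orthormorphism}, we have $\abs{T_\alpha x}\wedge x\conv{\uoLt_E}0$. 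The net $\netgen{\abs{T_\alpha x}}{\alpha\in\is}$ lies in the band $B_x$ generated by $x$, which has $x$ as a weak order unit; invoking \cref{res:uoLtweakunit} applied within $B_x$ — a regular (band) vector sublattice of $E$, so its Hausdorff \uoLtop\ is the restriction of $\uoLt_E$ — yields $\abs{T_\alpha x}\conv{\uoLt_E}0$, i.e.\ $T_\alpha x\conv{\uoLt_E}0$ for all $x\in E$. (Alternatively one can cite \cite[Proposition~9.8]{taylor:2019} directly as in the proof of \cref{res:uoRKF}.)

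\textbf{From \partref{part:uoLt_RKF_4} to \partref{part:uoLt_RKF_1}.} This is the main obstacle and is where $B_S=E$ enters. Suppose $T_\alpha s\conv{\uoLt_E}0$ for all $s\in S$. Since $\uoLt_E$ is in particular a Hausdorff \oLtop\ on $E$, and $E$ has the (principal) projection property by Dedekind completeness, \cref{res:oLt_preparation} applies: with $\tau_E=\uoLt_E$ (so that $\unb_E\tau_E=\uoLt_E$ by the uniqueness of the Hausdorff \uoLtop), the hypothesis $T_\alpha s\conv{\uoLt_E}0$ for all $s\in S$ with $B_S=E$ gives $T_\alpha x\conv{\uoLt_E}0$ for all $x\in E$ — so \partref{part:uoLt_RKF_4} and \partref{part:uoLt_RKF_2} are already equivalent via that lemma. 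It then remains to pass from \partref{part:uoLt_RKF_2} to \partref{part:uoLt_RKF_1}. Fix $T\in\Orth(E)$ and $x\in E$; by \cref{res:uoLtop_on_regular_sublattices_of_the_order_bounded_operators}\partref{part:topology_on_operators_2} it suffices to show $\abs{T_\alpha x}\wedge\abs{Tx}\conv{\uoLt_E}0$. But $\abs{T_\alpha x}\conv{\uoLt_E}0$ by \partref{part:uoLt_RKF_2} (local solidness of $\uoLt_E$ gives convergence of the moduli), hence $\abs{T_\alpha x}\wedge\abs{Tx}\leq\abs{T_\alpha x}\conv{\uoLt_E}0$, again by local solidness. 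This establishes \partref{part:uoLt_RKF_1}.

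\textbf{The final assertion.} When $E$ has a positive weak order unit $e$, take $S=\{e\}$, so $B_S=B_e=E$. The equivalence $\partref{part:uoLt_RKF_1}\Leftrightarrow\partref{part:uoLt_RKF_4}$ then reads: $T_\alpha\conv{\uoLt_{\Orth(E)}}0$ iff $T_\alpha e\conv{\uoLt_E}0$, which is exactly the ``in particular'' clause. The one subtlety worth flagging is the passage through $B_x$ in the \partref{part:uoLt_RKF_3}$\Rightarrow$\partref{part:uoLt_RKF_2} step: one must confirm that the Hausdorff \uoLtop\ of the band $B_x$ is the restriction of $\uoLt_E$, which follows since $B_x$ is a regular (projection-band) vector sublattice of $E$ and restriction of a Hausdorff \uoLtop\ to a regular sublattice is again the Hausdorff \uoLtop\ there (see \cite[Proposition~5.12]{taylor:2019}); alternatively, bypass bands entirely by citing \cite[Proposition~9.8]{taylor:2019} (or \cite[Proposition~7.4]{deng_de_jeu_UNPUBLISHED:2020a}) exactly as in the proof of \cref{res:uoRKF}, which handles the band-to-whole-space step in one stroke.
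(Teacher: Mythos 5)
Your proposal is correct and follows essentially the same route as the paper: the equivalence of \partref{part:uoLt_RKF_1} and \partref{part:uoLt_RKF_3} via the restriction statement for the Hausdorff \uoLtop\ on regular sublattices of $\regops(E)$, the passage to strong convergence by testing against $\Id$ and invoking \cref{res:uoLtweakunit} inside the band $B_{\abs{x}}$, the reverse implication by local solidness of $\uoLt_E$, and the step from $S$ to all of $E$ via \cref{res:oLt_preparation} together with $\unb_E\uoLt_E=\uoLt_E$. The only cosmetic difference is that you launch the band argument from \partref{part:uoLt_RKF_3} rather than \partref{part:uoLt_RKF_1}, which is immaterial once their equivalence is in place.
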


\begin{proof}

The equivalence of the parts \partref{part:uoLt_RKF_1} and~\partref{part:uoLt_RKF_3} follows from the final part of \cref{res:uoLtops_on_lattices_of_operators_most_precise}.

	We prove that part~\partref{part:uoLt_RKF_1} implies part~\partref{part:uoLt_RKF_2}. Suppose that $T_\alpha\conv{\uoLt_{\Orth(E)}} 0$ in $\Orth(E)$. Take an $x\in E$. Then certainly $\abs{T_\alpha x}\wedge\abs{x}=\abs{T_\alpha x}\wedge \abs{\Id x}\conv{\uoLt_E}0$. The net $\netgen{T_\alpha x}{\alpha\in\is}$ is contained in the band $B_{\abs{x}}$. Since, by \cite[Proposition~5.12]{taylor:2019}, the regular vector sublattice $B_{\abs{x}}$ of $E$ also admits a \necun\ Hausdorff \uoLtop\ (namely, the restriction of $\uoLt_E$ to $B_{\abs{x}}$),  it then follows from \cref{res:uoLtweakunit} that $T_\alpha x\conv{\uoLt_E}0$ in $E$.
	
	We prove that part~\partref{part:uoLt_RKF_2} implies part~\partref{part:uoLt_RKF_1}. Suppose that $T_\alpha x\conv{\uoLt_E}0$ for all $x\in E$. Since $\uoLt_E$ is locally solid, we then also have $\abs{T_\alpha x}\wedge{\abs{Tx}}\conv{\uoLt_E}0$ for all $T\in\Orth(E)$ and $x\in E$. Hence $T_\alpha\conv{\uoLt_{\Orth(E)}} 0$ in $\Orth(E)$.
	
	It is clear that part~\partref{part:uoLt_RKF_2} implies  part~\partref{part:uoLt_RKF_4}.
	
	Since $\unb_E\uoLt_E=\uoLt_E$, \cref{res:oLt_preparation} shows that part~\partref{part:uoLt_RKF_4} implies  part~\partref{part:uoLt_RKF_2}.
\end{proof}

%%%%%%%%%%%%%%%%%%%%%%%%%%%%%%%%% ACKNOWLEDGEMENTS %%%%%%%%%%%%%%%%%%%%%%%%%%%%%%%%%%%%%

\subsection*{Acknowledgements} During this research, the first author was supported by a grant of China Scholarship Council (CSC). The authors thank the anonymous referees for their careful reading of the manuscript and their constructive suggestions. These have led, amongst others, to \cref{res:UBP} now being established for general vector lattices rather than for Banach lattices as in the original manuscript, and with an easier proof.

\bibliographystyle{plain}
\urlstyle{same}
\bibliography{general_bibliography}

\def\cprime{$'$} \def\cprime{$'$} \def\cprime{$'$} \def\cprime{$'$}
  \def\cprime{$'$} \def\cprime{$'$} \def\cprime{$'$}
  \def\lfhook#1{\setbox0=\hbox{#1}{\ooalign{\hidewidth
  \lower1.5ex\hbox{'}\hidewidth\crcr\unhbox0}}} \def\cprime{$'$}
  \def\cprime{$'$}
\begin{thebibliography}{10}

\bibitem{aliprantis_border_INFINITE_DIMENSIONAL_ANALYSIS_THIRD_EDITION:2006}
C.D. Aliprantis and K.C. Border.
\newblock {\em Infinite dimensional analysis. A hitchhiker's guide.}
\newblock Springer, Berlin, third edition, 2006.

\bibitem{aliprantis_burkinshaw_LOCALLY_SOLID_RIESZ_SPACES_WITH_APPLICATIONS_TO_ECONOMICS_SECOND_EDITION:2003}
C.D. Aliprantis and O.~Burkinshaw.
\newblock {\em Locally solid {R}iesz spaces with applications to economics},
  volume 105 of {\em Mathematical Surveys and Monographs}.
\newblock American Mathematical Society, Providence, RI, second edition, 2003.

\bibitem{aliprantis_burkinshaw_POSITIVE_OPERATORS_SPRINGER_REPRINT:2006}
C.D. Aliprantis and O.~Burkinshaw.
\newblock {\em Positive operators}.
\newblock Springer, Dordrecht, 2006.
\newblock Reprint of the 1985 original.

\bibitem{beattie_butzmann_CONVERGENCE_STRUCTURES_AND_APPLICATIONS_TO_FUNCTIONAL_ANALYSIS:2002}
R.~Beattie and H.-P. Butzmann.
\newblock {\em Convergence structures and applications to functional analysis}.
\newblock Kluwer Academic Publishers, Dordrecht, 2002.

\bibitem{bogachev_MEASURE_THEORY_VOLUME_II:2007}
V.I. Bogachev.
\newblock {\em Measure theory. {V}ol. {II}}.
\newblock Springer-Verlag, Berlin, 2007.

\bibitem{conradie:2005}
J.~Conradie.
\newblock The coarsest {H}ausdorff {L}ebesgue topology.
\newblock {\em Quaest. Math.}, 28(3):287--304, 2005.

\bibitem{conway_A_COURSE_IN_FUNCTIONAL_ANALYSIS_SECOND_EDITION:1990}
J.B. Conway.
\newblock {\em A course in functional analysis}, volume~96 of {\em Graduate
  Texts in Mathematics}.
\newblock Springer-Verlag, New York, second edition edition, 1990.

\bibitem{dabboorasad_emelyanov_marabeh_UNPUBLISHED:2017}
Y.A. Dabboorasad, E.Y. Emelyanov, and M.A.A Marabeh.
\newblock Order convergence in inifinite-dimensional vector lattices is not
  topological.
\newblock Preprint, 2017. Online at https://arxiv.org/pdf/1705.09883.pdf.

\bibitem{de_jeu_wortel:2014}
M.~de~Jeu and M.~Wortel.
\newblock Compact groups of positive operators on {B}anach lattices.
\newblock {\em Indag. Math. (N.S.)}, 25(2):186--205, 2014.

\bibitem{deng_de_jeu_UNPUBLISHED:2020c}
Y.~Deng and M.~de~Jeu.
\newblock Convergence structures and {H}ausdorff uo-{L}ebesgue topologies on
  vector lattice algebras of operators.
\newblock Preprint, 2020. Online at \url{https://arxiv.org/pdf/2011.03768.pdf}.

\bibitem{deng_de_jeu_UNPUBLISHED:2020a}
Y.~Deng and M.~de~Jeu.
\newblock Vector lattices with a {H}ausdorff uo-{L}ebesgue topology.
\newblock Preprint, 2020. Online at \url{https://arxiv.org/pdf/2005.14636.pdf}.

\bibitem{dudley:1964}
R.M. Dudley.
\newblock On sequential convergence.
\newblock {\em Trans. Amer. Math. Soc.}, 112:483--507, 1964.

\bibitem{folland_REAL_ANALYSIS_SECOND_EDITION:1999}
G.B. Folland.
\newblock {\em Real analysis. {M}odern techniques and their applications}.
\newblock Pure and Applied Mathematics. John Wiley \& Sons, Inc., New York,
  second edition, 1999.

\bibitem{gao_troitsky_xanthos:2017}
N.~Gao, V.G. Troitsky, and F.~Xanthos.
\newblock Uo-convergence and its applications to {C}es\`aro means in {B}anach
  lattices.
\newblock {\em Israel J. Math.}, 220(2):649--689, 2017.

\bibitem{gao_xanthos:2014}
N.~Gao and F.~Xanthos.
\newblock Unbounded order convergence and application to martingales without
  probability.
\newblock {\em J. Math. Anal. Appl.}, 415(2):931--947, 2014.

\bibitem{hackenbroch:1977}
W.~Hackenbroch.
\newblock Representation of vector lattices by spaces of real functions.
\newblock In {\em Functional analysis: surveys and recent results ({P}roc.
  {C}onf., {P}aderborn, 1976)}, pages 51--72. North--Holland Math. Studies,
  Vol. 27; Notas de Mat., No. 63. North-Holland, Amsterdam, 1977.

\bibitem{husain_INTRODUCTION-TO_TOPOLOGICAL_GROUPS:1966}
T.~Husain.
\newblock {\em Introduction to topological groups}.
\newblock W. B. Saunders Co., Philadelphia, Pa.-London, 1966.

\bibitem{kandic_taylor:2018}
M.~Kandi\'{c} and M.A. Taylor.
\newblock Metrizability of minimal and unbounded topologies.
\newblock {\em J. Math. Anal. Appl.}, 466(1):144--159, 2018.

\bibitem{li_chen:2018}
H.~Li and Z.~Chen.
\newblock Some loose ends on unbounded order convergence.
\newblock {\em Positivity}, 22:83--90, 2018.

\bibitem{luxemburg_zaanen_RIESZ_SPACES_VOLUME_I:1971}
W.A.J. Luxemburg and A.C. Zaanen.
\newblock {\em Riesz spaces. {V}ol. {I}}.
\newblock North-Holland Publishing Co., Amsterdam-London; American Elsevier
  Publishing Co., New York, 1971.

\bibitem{meyer-nieberg_BANACH_LATTICES:1991}
P.~Meyer-Nieberg.
\newblock {\em Banach lattices}.
\newblock Universitext. Springer-Verlag, Berlin, 1991.

\bibitem{schaefer_wolff_arendt:1978}
H.H. Schaefer, M.~Wolff, and W.~Arendt.
\newblock On lattice isomorphisms with positive real spectrum and groups of
  positive operators.
\newblock {\em Math. Z.}, 164(2):115--123, 1978.

\bibitem{taylor_THESIS:2018}
M.A. Taylor.
\newblock Unbounded convergences in vector lattices.
\newblock Master's thesis, University of Alberta, University of Alberta,
  Edmonton, 2018.
\newblock Online at
  \url{https://era.library.ualberta.ca/items/2cc15102-1119-421c-8228-1ad1fb96d357}.

\bibitem{taylor:2019}
M.A. Taylor.
\newblock Unbounded topologies and {$uo$}-convergence in locally solid vector
  lattices.
\newblock {\em J. Math. Anal. Appl.}, 472(1):981--1000, 2019.

\bibitem{wickstead:1977a}
A.W. Wickstead.
\newblock Representation and duality of multiplication operators on
  {A}rchimedean {R}iesz spaces.
\newblock {\em Compositio Math.}, 35(3):225--238, 1977.

\bibitem{zaanen_INTRODUCTION_TO_OPERATOR_THEORY_IN_RIESZ_SPACES:1997}
A.C. Zaanen.
\newblock {\em Introduction to operator theory in {R}iesz spaces}.
\newblock Springer-Verlag, Berlin, 1997.

\end{thebibliography}
%\bibliography{../../tex/templates/bibliography/general_bibliography}

\end{document}